\pgfplotsset{compat=1.17} 
\newcommand{\abs}[1]{\left\lvert #1 \right\rvert}
\newtheorem{theorem}{Theorem}
\newtheorem{remark}{Remark}
\newenvironment{equationwithLineno}
    {
    % \begin{linenomath*} 
    \begin{equation}
    }
    {
    \end{equation}
    % \end{linenomath*} 
    }
\newenvironment{equationwithLineno*}
    {
    % \begin{linenomath*} 
    \begin{equation*}
    }
    {
    \end{equation*}
    % \end{linenomath*} 
    }
\begin{document}

\begin{frontmatter}

%% Title, authors and addresses

%% use the tnoteref command within \title for footnotes;
%% use the tnotetext command for theassociated footnote;
%% use the fnref command within \author or \address for footnotes;
%% use the fntext command for theassociated footnote;
%% use the corref command within \author for corresponding author footnotes;
%% use the cortext command for theassociated footnote;
%% use the ead command for the email address,
%% and the form \ead[url] for the home page:
%% \title{Title\tnoteref{label1}}
%% \tnotetext[label1]{}
%% \author{Name\corref{cor1}\fnref{label2}}
%% \ead{email address}
%% \ead[url]{home page}
%% \fntext[label2]{}
%% \cortext[cor1]{}
%% \affiliation{organization={},
%%             addressline={},
%%             city={},
%%             postcode={},
%%             state={},
%%             country={}}
%% \fntext[label3]{}

\title{Algebraic Inverse Fast Multipole Method: A fast direct solver that is better than HODLR based fast direct solver}

%% use optional labels to link authors explicitly to addresses:
%% \author[label1,label2]{}
%% \affiliation[label1]{organization={},
%%             addressline={},
%%             city={},
%%             postcode={},
%%             state={},
%%             country={}}
%%
%% \affiliation[label2]{organization={},
%%             addressline={},
%%             city={},
%%             postcode={},
%%             state={},
%%             country={}}

\author[inst1]{Vaishnavi Gujjula}

\affiliation[inst1]{organization={Department of Mathematics},%Department and Organization
            addressline={Indian Institute of Technology Madras}, 
            city={Chennai},
            postcode={600036}, 
            state={Tamil Nadu},
            country={India}}

\author[inst1]{Sivaram Ambikasaran}
\begin{abstract}
This article presents a fast direct solver, termed Algebraic Inverse Fast Multipole Method (from now on abbreviated as AIFMM), for linear systems arising out of $N$-body problems. AIFMM relies on the following three main ideas: (i) Certain sub-blocks in the matrix corresponding to $N$-body problems can be efficiently represented as low-rank matrices; (ii) The low-rank sub-blocks in the above matrix are leveraged to construct an extended sparse linear system; (iii) While solving the extended sparse linear system, certain fill-ins that arise in the elimination phase are represented as low-rank matrices and are "redirected" though other variables maintaining zero fill-in sparsity. The main highlights of this article are the following: (i) Our method is completely algebraic (as opposed to the existing Inverse Fast Multipole Method~\cite{ambikasaran2014inverse,doi:10.1137/15M1034477,TAKAHASHI2017406}, from now on abbreviated as IFMM). We rely on our new Nested Cross Approximation~\cite{gujjula2022nca} (from now on abbreviated as NNCA) to represent the matrix arising out of $N$-body problems. (ii) A significant contribution is that the algorithm presented in this article is more efficient than the existing IFMMs. In the existing IFMMs, the fill-ins are compressed and redirected as and when they are created. Whereas in this article, we update the fill-ins first without affecting the computational complexity. We then compress and redirect them only once. (iii) Another noteworthy contribution of this article is that we provide a comparison of AIFMM with Hierarchical Off-Diagonal Low-Rank (from now on abbreviated as HODLR) based fast direct solver and NNCA powered GMRES based fast iterative solver. (iv) Additionally, AIFMM is also demonstrated as a preconditioner.
\end{abstract}

\begin{keyword}
%% keywords here, in the form: keyword \sep keyword
Fast direct solver \sep  Extended sparsification \sep  Fast Multipole Method \sep Hierarchical matrices \sep  Low-Rank matrices \sep Nested Cross Approximation\sep Preconditioner
%% PACS codes here, in the form: \PACS code \sep code
% \PACS 0000 \sep 1111
%% MSC codes here, in the form: \MSC code \sep code
%% or \MSC[2008] code \sep code (2000 is the default)
\MSC 65F05 \sep 65F08 \sep 65Y20
\end{keyword}

\end{frontmatter}

% \runninglinenumbers

%% main text
\section{Introduction}
\label{sec:introduction}
% In this article, our focus is to solve dense linear systems $Ax=b$ where $A\in\mathbb{C}^{N\times N}$ and $b\in\mathbb{C}^{N\times 1}$, that appear in $N$-body problems arising in applications such as electrostatics, scattering, integral equation solvers, radial basis interpolation, inverse problems, etc. 
This article focuses on solving linear systems that arise out of $N$-body problems. Such $N$-body problems arise frequently in many applications such as electrostatics, integral equation solvers, radial basis function interpolation, inverse problems, Gaussian process regression, wave scattering, etc. 
A linear system can be solved using a direct solver or an iterative solver. Both have their advantages over the other and it is highly problem specific to choose the solver that is better.

 % The convergence of an iterative solver is highly problem specific. 
 An important and widely used iterative technique is the Krylov subspace technique, which involves matrix-vector products. To speed up these matrix-vector products, fast summation techniques such as Fast Multipole Method (FMM)~\cite{greengard1987fast}, Barnes-Hut~\cite{barnes1986hierarchical}, FFT, etc. are used. Further for fast convergence in problems with high condition numbers, an iterative solver is coupled with a preconditioner.

On the other hand, direct solvers involve a factorization step followed by a solve step. The factorization step comprises of an LU factorization or QR factorization, etc., which generally is computationally more expensive than an iterative technique. But direct solvers are more robust and accurate than iterative solvers. Further, direct solvers are advantageous when one is interested in multiple right-hand sides. 
A naive direct solver costs $\mathcal{O}(N^{3})$, which is prohibitively large for large system sizes. To reduce the computational complexity, fast methods are used. Many dense matrices arising out of $N$-body problems possess a hierarchical low-rank structure.
% , i.e., the dense matrix when divided into a hierarchy of sub-matrices, many of the sub-matrices can be well approximated by low-rank matrices. 
% This low-rank structure is exploited to construct the Hierarchical matrices. 
This low-rank structure is exploited to construct hierarchical matrices and hierarchical matrices based fast direct solvers ~\cite{hackbusch1999sparse, grasedyck2003construction,borm2003introduction,bebendorf2007hierarchical}.

While constructing a hierarchical matrix, the low-rank bases of the sub-blocks that are compressed can be obtained in a nested or a non-nested approach. In the nested approach, the low-rank bases at a parent level in the hierarchy are constructed from the bases at the child level. The class of hierarchical matrices that follow the nested approach are called $\mathcal{H}^{2}$ matrices.

The matrix sub-blocks that are low-rank approximated in a hierarchical matrix are identified based on an admissibility condition.
% Hierarchical matrices are also characterised by the admissibility condition. 
The most widely used admissibility conditions are the weak admissibility and strong or standard admissibility conditions.
% One such method involves representing the given matrix using a Hierarchical matrix, which makes use of the low-rank structure of the matrix. The Different forms of Hierarchical matrices can be constructed
% In this article we represent the given matrix using an FMM matrix, a sub-class of $\mathcal{H}^{2}$ matrices can be efficiently represented as a $\mathcal{H}^{2}$ matrix
% In this article, we focus on solving linear systems $Ax=b$, arising out of applications, where matrix $A$ can be efficiently represented as an FMM matrix~\cite{ambikasaran2013fast}. FMM matrix can also be viewed as a $\mathcal{H}^{2}$ matrix with strong admissibility condition. Such problems arise in applications such as electrostatics, scattering, integral equation solvers, radial basis interpolation, inverse problems, partial differential equation solvers, etc. For example, in an electrostatics problem, given the potential at some target locations, we are interested in finding the unknown charges at known charge locations.
% There exist many classes of Hierarchical matrices, which are characterised by the admissibility condition and the nested or non-nested approach in the computation of bases. 

Hierarchically Off-Diagonal Low-Rank (HODLR) and Hierarchically Semi-Separable (HSS) matrices~\cite{ho2012fast,kong2011adaptive,martinsson2005fast,ambikasaran2013mathcal,chandrasekaran2002fast,chandrasekaran2006fast,chandrasekaran2007fast} are sub-classes of Hierarchical matrices that follow the weak admissibility condition, wherein all the off-diagonal sub-blocks are approximated by low-rank matrices. The former follows a non-nested approach in the construction of the bases and the latter follows the nested approach. The major drawback with these classes of Hierarchical matrices is that the ranks of the compressed sub-matrices are not "truly" low-rank. For instance in 2D, the ranks of the compressed sub-matrices grow as $\mathcal{O}\left(\sqrt{N}\log^{2}(N)\right)$~\cite{kandappan2022hodlr2d}, in 3D it is $\mathcal{O}\left(N^{\frac{2}{3}}\log^{3}(N)\right)$, where $N$ is the size of the compressed sub-matrix. In a $d$-dimensional setting, the rank of the compressed sub-matrices grow as $\mathcal{O}\left(N^{\frac{d-1}{d}}\log^{d}(N)\right)$~\cite{khan2022numerical}. So, the direct solvers developed for HODLR and HSS classes of matrices are not linear in complexity. 

% Fast direct solvers for hierarchical matrices are developed, by leveraging the low-rank structure of the matrix. There has been an extensive study on direct solvers for HODLR and HSS matrices~\cite{ho2012fast,kong2011adaptive,martinsson2005fast,ambikasaran2013mathcal,chandrasekaran2002fast,chandrasekaran2006fast,chandrasekaran2007fast}, which are a sub-class of Hierarchical matrices that follow the weak admissibility condition. The major drawback with this class of Hierarchical matrices is that the interactions corresponding to non-overlapping sub-domains are compressed, whose ranks are not "truly" low-rank. For instance in 2D, the ranks of the compressed sub-blocks grow as $\mathcal{O}(\sqrt{N}\log^{2}(\sqrt{N}))$, in 3D it is $\mathcal{O}(N^{\frac{2}{3}})\log^{3}(N^{\frac{2}{3}})$~\cite{khan2022numerical}, where $N$ is the size of the compressed sub-block. In a $d$-dimensional setting, the rank of the compressed sub-blocks grow as $\mathcal{O}(N^{\frac{d-1}{d}})\log^{d}(N^{\frac{d-1}{d}})$. So, the direct solvers developed for HODLR and HSS classes of matrices are not linear in complexity. 

Another sub-class of Hierarchical matrices is the $\mathcal{H}^{2}$ matrices with strong admissibility condition~\cite{borm2003introduction,bebendorf2008hierarchical,hackbusch2015hierarchical}, wherein interactions between neighboring clusters of particles are not compressed and interactions between well-separated clusters of particles are approximated by low-rank matrices. 
This sub-class of Hierarchical matrices can be considered to be the algebraic generalization of FMM, so they are also referred as FMM matrices.
% The fast direct solver built in this article is for linear systems involving FMM matrices.
% A linear complexity direct solver can be constructed by considering $\mathcal{H}^{2}$ matrices with strong admissibility condition,
% The strong admissibility condition guarantees low-rank for matrices arising out of singular kernels. 
The strong admissibility condition guarantees that the interactions between well-separated clusters of particles (when the underlying matrix is generated from singular kernels)~\cite{khan2022numerical} do not scale with the number of particles in the cluster.
% The class of $\mathcal{H}^{2}$ matrices with strong admissibility condition can be considered as an algebraic generalization of FMM matrices.
% One can consider FMM matrices as an algebraic generalization of $\mathcal{H}^{2}$ matrices. 

There exists a vast literature on $\mathcal{H}^{2}$ matrices and solvers for linear systems involving $\mathcal{H}^{2}$ matrices in almost linear complexity~\cite{hackbusch2002h2,bebendorf2005hierarchical,borm2006matrix,borm2010efficient,borm2013efficient}. The pre-factors in the scaling term of these methods tend to be large. Inverse fast multipole method (from now on abbreviated as IFMM), a fast direct solver for FMM matrices with linear complexity, was introduced~\cite{ambikasaran2014inverse}, whose pre-factor in the scaling term is not that large.
Another related work on fast direct solvers for FMM matrices is the strong skeletonization based factorization method~\cite{minden2017recursive,sushnikova2022fmm}. 

The work developed in this article is a variant of the IFMM developed in~\cite{ambikasaran2014inverse,doi:10.1137/15M1034477,TAKAHASHI2017406}.
One of the key ideas based on which the IFMM is developed is the extended sparsification technique that was earlier used in~\cite{ho2012fast,10.5555/1087445,greengard2009fast}. % The extended sparse matrix is so arranged that the fill-ins occur symmetrically. 
In IFMM, an extended sparse system of size $\mathcal{O}(N)$ is developed by introducing auxiliary variables - which are the locals and multipoles of nodes at various levels of the FMM tree. 
% So the extended sparse matrix is an assembly of the FMM operators. The extended sparse matrix is so arranged that the fill-ins occur symmetrically. 
The advantage of sparsification is that the computational complexity of the solver gets improved, provided the fill-ins are minimal. 
In IFMM, the fill-ins corresponding to well-separated clusters of particles are compressed and redirected via the existing non-zero entries, which contributes to its linear scaling.
With the auxiliary variables in IFMM being the locals and multipoles of nodes at various levels of the FMM tree, the extended sparse matrix is an assembly of the FMM operators, which are the L2P/L2L (Local-To-Particle/Local-To-Local), M2L (Multipole-To-Local), P2M/M2M (Particle-To-Multipole/Multipole-To-Multipole), \\and P2P (Particle-To-Particle) operators. 

In this article, we develop an Algebraic Inverse Fast Multipole Method (from now on abbreviated as AIFMM), wherein we employ a \textbf{new} Nested Cross Approximation (NNCA)~\cite{gujjula2022nca}, an algebraic technique, to obtain the L2P/L2L, M2L and P2M/M2M operators. 
% So we name our method Algebraic IFMM.
Nested Cross Approximation (from now on abbreviated as NCA)~\cite{bebendorf2012constructing,zhao2019fast,gujjula2022nca} is a nested version of Adaptive Cross Approximation (from now on abbreviated as ACA), that forms low-rank bases in a nested fashion. NNCA~\cite{gujjula2022nca} differs from the NCAs described in~\cite{bebendorf2012constructing,zhao2019fast} in the technique of choosing pivots, a key step of the approximation. The search space for far-field pivots of a hypercube (a node belonging to the $2^{d}$ tree) is considered to be its interaction list region in the former and is considered to be its entire far-field region in the latter. So the time to build the former approximation is lower than that of the latter, for no significant difference in accuracy.
% The low rank factorizations are to be performed at two stages: i) In obtaining the L2P/L2L, M2L, P2M/M2M operators that are needed to obtain the extended sparse matrix. ii) In compressing the fill-ins corresponding to the far-field interactions.
% Though the AIFMM developed in this article can be applied to any FMM matrix arising out of low-frequency problems, in this article we focus mainly on applying it to the Lippmann-Schwinger equation. 

There have been a few articles that have presented IFMM for various applications. 
% We seek to review them~\cite{ambikasaran2014inverse,doi:10.1137/15M1034477,TAKAHASHI2017406} here. 
% which bear some closeness to the AIFMM developed here.
% In article~\cite{doi:10.1137/15M1034477}, the initial compression needed to obtain the extended sparse matrix is performed using Chebyshev interpolation while the fill-ins are compressed using rSVD (add ref). 
% In article~\cite{TAKAHASHI2017406}, the initial compression is obtained using the Low Frequency FMM (LFFMM) operators. Here too the fill-ins are compressed using rSVD. 
% The difference between this article and the earlier articles is that the method of 
Following are the differences between this article and the earlier articles~\cite{ambikasaran2014inverse,doi:10.1137/15M1034477,TAKAHASHI2017406}:
\begin{enumerate}
    \item 
    In this present article, the FMM operators which are used to form the extended sparse matrix, are obtained using NNCA~\cite{gujjula2022nca}, a purely algebraic technique. 
    While in~\cite{doi:10.1137/15M1034477}, it is done using Chebyshev interpolation, an analytic technique. And in~\cite{TAKAHASHI2017406}, the extended sparse matrix is assembled using the Low Frequency FMM (LFFMM) operators, which is also an analytic technique.
    The advantages of an algebraic technique are that i) the method can be used in a black box fashion irrespective of the application ii) the ranks obtained are usually lower than that of the analytic techniques as the bases obtained through an algebraic method are problem and domain specific.
    \item
    In this article, the fill-in compression and redirection is performed using rank revealing QR (RRQR). While in~\cite{doi:10.1137/15M1034477,TAKAHASHI2017406} it is done using randomized SVD.
    % , a computationally expensive method than RRQR.
    \item
    In this article, a more efficient elimination algorithm than the one stated in the existing IFMMs~\cite{ambikasaran2014inverse,doi:10.1137/15M1034477,TAKAHASHI2017406} is presented. 
    In the existing IFMMs, fill-ins are compressed and redirected as and when they are created, which could happen multiple times in the elimination process. Whereas in this article, we do not compress and redirect a fill-in as and when created. We update the fill-ins without affecting the computational complexity. We then compress and redirect only once. 
    % Whereas the latter compresses and redirects a fill-in as and when created.
    % In this article, multiple compressions and redirections of fill-ins are avoided by choosing to update the fill-ins multiple times and compress and redirect once before the  
    % In this article, AIFMM is shown to work as a direct solver and as a preconditioner. Further, it is also shown that AIFMM as a direct solver is faster than HODLR. The IFMM presented in~\cite{doi:10.1137/15M1034477,TAKAHASHI2017406}, 
    \item
    In this article, we demonstrate AIFMM as a preconditioner in the high frequency scattering problem. While in~\cite{doi:10.1137/15M1034477,TAKAHASHI2017406}, IFMM is studied as a preconditioner in a Stokes flow problem and a 3D Helmholtz BEM at low frequencies respectively.
\end{enumerate}

Below are the highlights of the AIFMM presented in this article:
\begin{enumerate}
\item It is a completely algebraic method, i.e., it does not use any analytic techniques such as the interpolation techniques~\cite{doi:10.1137/15M1034477}, multipole expansions~\cite{TAKAHASHI2017406}, etc, to obtain the low-rank factorizations.
\item AIFMM is demonstrated as a direct solver for linear systems involving non-oscillatory Green's functions and the 2D Helmholtz function at low frequency. 
% It is shown that the error can be controlled and the desired accuracy can be reached.
\item To the best of our knowledge, this work is one of the first to provide a comparison of the performance of AIFMM with that of i) a HODLR based fast direct solver~\cite{ambikasaran2013mathcal} ii) GMRES, an iterative solver. It is observed that AIFMM is faster than HODLR, and GMRES is faster than AIFMM. But when one is interested in solving for multiple right hand sides, AIFMM is faster than GMRES.
% \item AIFMM as a direct solver is also compared to GMRES. For the problems considered, the solve time of AIFMM is lower than that of GMRES. 
% Also, the solve time of AIFMM is lower than that of GMRES. 
% A point to be noted is that AIFMM involves an additional factorization time which does not exist for GMRES. This concludes that if one were to solve for multiple right hand sides, AIFMM is faster than GMRES.
\item AIFMM is also demonstrated as a preconditioner in an iterative scheme for high frequency scattering problem. It is observed that AIFMM as a preconditioner is better than the block-diagonal preconditioner, but not as good as the HODLR preconditioner.
% It can be used as i) direct solver for problems at low frequencies and ii) a preconditioner in an iterative scheme for problems at high frequencies.
\end{enumerate}

The rest of the article is organized as follows: Section~\ref{sec:preliminaries} describes the preliminaries to develop AIFMM, which are the construction of FMM tree, identification of the low-rank sub-blocks, and assembly of the various FMM operators using NNCA. Section~\ref{sec:AIFMM} describes AIFMM, which includes the construction of the extended sparse system, elimination phase, and back substitution phase. Section~\ref{sec:numericalResults}, illustrates various numerical benchmarks of AIFMM in comparison to those of GMRES and HODLR.

\section{Preliminaries}\label{sec:preliminaries}
Let $u\in \mathbb{R}^{N\times d}$ be the coordinates of $N$ targets in $d$ dimensions (we will be referring them as target points), $v\in \mathbb{R}^{N\times d}$ be the coordinates of $N$ sources in $d$ dimensions (we will be referring them as source points). Let $A\in\mathbb{C}^{N\times N}$ be the matrix that captures the pair-wise interaction between these points, i.e., $A_{ij}$ is the interaction between the source and target located at $v_{j}$ and $u_{i}$ respectively. Such interaction matrices arise in many applications; to name a few integral equation solvers, particle simulations, covariance matrices, electrostatics, scattering, etc.
Electrostatic problems are studied extensively in the literature and the naming conventions in most of the research articles are based on it. Hence, in this article, we follow the nomenclature of Electrostatics. 

We assume unknown charges of strength $x\in \mathbb{C}^{N\times 1}$ are located at source points $v$ and the potential $b\in \mathbb{C}^{N\times 1}$ at the target points $u$ is known. We are interested in finding the unknown charges $x$, given the potential $b$ or in other words solve the system of equations, 
\begin{equationwithLineno}
    Ax=b.
\end{equationwithLineno}

% \subsection{Key idea}
A key idea of the inverse fast multipole method is to introduce auxiliary variables and then create an extended sparse system of size $\mathcal{O}(N)$. The advantage of sparsification is that it reduces the complexity of the problem
% , provided the fill-ins in the elimination phase are minimal. 
as some of the fill-ins that get created in the elimination phase are compressed and redirected through the existing non-zero entries, resulting in a linear complexity algorithm.

% Using elimination to solve this extended sparse system instead of solving the original dense system, reduces the complexity of the problem, provided the fill-ins are minimal. 
% It happens that some of the fill-ins correspond to well-separated interactions, and therefore can be compressed.
% Another key idea is that 

The extended sparse matrix is created by constructing the FMM matrix representation of $A$. The multipoles and locals that are formed at various levels of the FMM tree are considered to be the auxiliary variables. 

The steps involved in constructing the extended sparse matrix are i) construction of FMM tree ii) identification of the low-rank matrix sub-blocks, and iii) assembly of the various FMM operators. We now describe each of these steps below.

\subsection{Construction of FMM tree}\label{ssec:Tree}
We consider a smallest hypercube that contains the support of the particles to be the domain $\Omega\in\mathbb{R}^{d}$. A $2^{d}$ uniform tree is constructed over $\Omega$. 
% For instance, for problems in 2D, we build a quad-tree. 
The hypercube at level $0$, is the domain $\Omega$ itself. A hypercube at level $l$ is subdivided into $2^{d}$ hypercubes, which are considered to be at level $l+1$ of the tree. The former is considered to be the parent of the latter and the latter are considered to be the children of the former. And this subdivision is carried on hierarchically until a level $L$ is reached where the hypercubes contain no more than $n_{max}$ particles. We will be referring to the hierarchical tree as $\mathcal{T}^{L}$. The notations associated with a hypercube $i$ at level $l$ are described in Table~\ref{table:Notations1a}. For $d=2$, the construction of quad-tree and the numbering of the nodes till level 2 is illustrated in Figure~\ref{fig:numbering}.
% Before proceeding further, we describe some notations in Table~\ref{table:Notations1a}.
% We number the hypercubes at level $l$, from $0$ through $2^{d\times l}-1$.

   \begin{table}[!htbp]
\centering
  \begin{tabular}{|l|l|}
    \hline
    $i^{(l)}$ & Hypercube (also referred to as node or box) $i$ at level $l$ of the tree\\ \hline
    $\mathcal{P}(i^{(l)})$ & Parent of $i^{(l)}$ \\ \hline
    $\mathcal{C}(i^{(l)})$ & $\{j^{(l+1)}:\text{ }j^{(l+1)}\text{ is a child of }i^{(l)}\}$ \\ \hline
    % $\mathcal{N}(B^{(l)})$ & Neighbors of $B^{(l)}$; For a hypercube $B^{(l)}$, it consists of hypercubes at the same tree level as that of $B^{(l)}$ that do not follow the admissibility condition for low-rank.\\ \hline
    % $\mathcal{IL}(B^{(l)})$ & Interaction list of hypercube $B$ that consists of children of $B^{(l)}$'s parent's neighbors that are not its neighbors.\\ \hline
  \end{tabular}
  \caption{Notations associated with hypercube $i$ at level $l$}
    \label{table:Notations1a}
 \end{table}

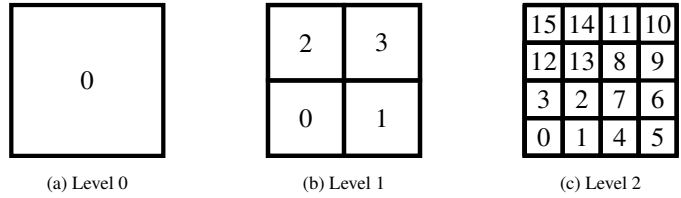
\begin{figure}[!htbp]
\centering
	\begin{subfigure}[b]{0.2\textwidth}
    \centering
    \begin{tikzpicture}[scale=0.5]
        \draw[ultra thick] (-2,-2) rectangle (2,2);
        \node at (0,0) {$0$};
    \end{tikzpicture}
    \caption{Level 0}
	\end{subfigure}
	\begin{subfigure}[b]{0.2\textwidth}
\centering
\begin{tikzpicture}[scale=0.5]
    \draw[ultra thick] (-2,-2) rectangle (2,2);
    \draw[ultra thick] (-2,-2) rectangle (0,0);
    \draw[ultra thick] (0,-2) rectangle (2,0);
    \draw[ultra thick] (0,0) rectangle (2,2);
    \draw[ultra thick] (-2,0) rectangle (0,2);
    \node at (-1,-1) {$0$};
    \node at (1,-1) {$1$};
    \node at (-1,1) {$2$};
    \node at (1,1) {$3$};
\end{tikzpicture}
\caption{Level 1}
	\end{subfigure}
	\begin{subfigure}[b]{0.2\textwidth}
\centering
\begin{tikzpicture}[scale=0.5]
    \draw[ultra thick] (-2,-2) rectangle (2,2);
    \draw[ultra thick] (-2,-2) rectangle (1,1);
    \draw[ultra thick] (-2,-2) grid (2,2);
    \draw[ultra thick] (-2,-2) grid (1,1);
    \node at (-1.5,-1.5) {$0$};
    \node at (-0.5,-1.5) {$1$};
    \node at (-0.5,-0.5) {$2$};
    \node at (-1.5,-0.5) {$3$};

    \node at (0.5,-1.5) {$4$};
    \node at (1.5,-1.5) {$5$};
    \node at (1.5,-0.5) {$6$};
    \node at (0.5,-0.5) {$7$};

    \node at (0.5,0.5) {$8$};
    \node at (1.5,0.5) {$9$};
    \node at (1.5,1.5) {$10$};
    \node at (0.5,1.5) {$11$};
    
    \node at (-1.5,0.5) {$12$};
    \node at (-0.5,0.5) {$13$};
    \node at (-0.5,1.5) {$14$};
    \node at (-1.5,1.5) {$15$};
    
\end{tikzpicture}
\caption{Level 2}
    \end{subfigure}
	\caption{The numbering convention followed at levels 0, 1, and 2 of a quad-tree.}
	\label{fig:numbering}
\end{figure}

\subsection{Identification of the low-rank matrix sub-blocks}\label{ssec:admCond}
Let $t^{X}$ and $s^{X}$ be the index sets that store indices of points $u$ and $v$ respectively that lie in hypercube $X^{(l)}$. 
\begin{align}
% \begin{alignwithLineno} 
    t^{X} &= \{m:u_{m}\in X^{(l)}\}\\
    s^{X} &= \{n:v_{n}\in X^{(l)}\}
% \end{alignwithLineno} 
\end{align}
% At some places in this subsection, we choose to omit the superscripts that indicate the level of the cell to simplify notations.
In this section and the upcoming sections, we omit the superscript that represents the level at some places, to improve the readability of notations, in the hope that the level can be understood from the context.
We follow the strong admissibility condition to identify the sub-blocks of the matrix that can be efficiently approximated by a low-rank matrix, i.e., the interaction between the clusters of particles located in hypercubes $X^{(l)}$ and $Y^{(l)}$, $A_{t^{X}s^{Y}}$, is approximated by a low-rank matrix, only if

\begin{equationwithLineno}\label{eq:adm_condn}
    \max\{\text{diam}(X^{(l)}), \text{diam}(Y^{(l)})\} \leq \eta \text{dist}(X^{(l)},Y^{(l)}), \text{ where}
\end{equationwithLineno}
 \begin{align*}
    \text{diam}(X^{(l)}) &= \sup\{\lVert x-y\rVert_{2}: x,y\in X^{(l)}\}, \\
    \text{dist}(X^{(l)},Y^{(l)}) &= \inf\{\lVert x-y\rVert_{2}: x\in X^{(l)}, y\in Y^{(l)}\}.
  \end{align*}
If $X^{(l)}$ and $Y^{(l)}$ satisfy the above stated strong admissibility criterion, then  $X^{(l)}$ and $Y^{(l)}$ are said to be well-separated and the interaction matrix $A_{t^{X}s^{Y}}$ is said to be admissible. Further $A_{t^{X}s^{Y}}$ is considered to be a far-field interaction. If $X^{(l)}$ and $Y^{(l)}$ do not agree with the strong admissibility criterion, then $A_{t^{X}s^{Y}}$ is said to be non-admissible and is considered to be a near-field interaction. In this article, we consider $\eta = \sqrt{d}$.

\subsubsection{FMM matrix structure}
% In Table~\ref{table:Notations1b}, we introduce the notation of neighbors and interaction-list. The same is illustrated for $d=2$ in Figure~\ref{fig:adm_condn}. 
% We introduce the notation of neighbors and interaction list in Table~\ref{table:Notations1b} using which the FMM matrix is constructed. The same is illustrated for $d=2$ in Figure~\ref{fig:adm_condn}. 
% We now describe the FMM matrix structure. 
For each node $i$ at level $l$, we introduce the neighbors and interaction list, described in Table~\ref{table:Notations1b}. We illustrate the same for a node in 2D in Figure~\ref{fig:adm_condn}. 
   \begin{table}[!htbp]
\centering
  \begin{tabularx}{\textwidth}{|l|X|}
    \hline
    % $B^{(l)}$ & Hypercube $B$ at level $l$ of the tree\\ \hline
    % $\mathcal{P}(B^{(l)})$ & Parent of $B^{(l)}$ \\ \hline
    % $\mathcal{C}(B^{(l)})$ & $\{B'^{(l+1)}:\text{ }B'^{(l+1)}\text{ is a child of }B^{(l)}\}$ \\ \hline
    $\mathcal{N}(i^{(l)})$ & Neighbors of $i^{(l)}$ that consists of hypercubes at level $l$, that do not satisfy the admissibility condition for low-rank.\\ \hline
    $\mathcal{IL}(i^{(l)})$ & Interaction list of hypercube $i^{(l)}$ that consists of children of $i^{(l)}$'s parent's neighbors that are not its neighbors.\\ \hline
  \end{tabularx}
  \caption{Neighbors and interaction list of hypercube $i$ at level $l$}
    \label{table:Notations1b}
 \end{table}

\begin{figure}[!htbp]
\centering
	% \begin{subfigure}[b]{0.2\textwidth}
    % \centering
            \subfloat[Level=0]{
    \begin{tikzpicture}[scale=0.5]
        \draw[ultra thick,fill=orange] (-2,-2) rectangle (2,2);
        % \draw[ultra thick,fill=red!80] (-2,-2) rectangle (0,0);
        % \draw[ultra thick,fill=red!80] (0,-2) rectangle (2,0);
        % \draw[ultra thick,fill=red!80] (0,0) rectangle (2,2);
        % \draw[ultra thick,fill=red!80] (-2,0) rectangle (0,2);
        % \node at (-1,-1) {$X$};
    \end{tikzpicture}
    % \caption{Level 0}
}
 % \end{subfigure}
	% \begin{subfigure}[b]{0.2\textwidth}
% \centering
            \subfloat[Level=1]{
\begin{tikzpicture}[scale=0.5]
    \draw[ultra thick] (-2,-2) rectangle (2,2);
    \draw[ultra thick,fill=red!80] (-2,-2) rectangle (0,0);
    \draw[ultra thick,fill=red!80] (0,-2) rectangle (2,0);
    \draw[ultra thick,fill=red!80] (0,0) rectangle (2,2);
    \draw[ultra thick,fill=red!80] (-2,0) rectangle (0,2);
    \draw[ultra thick,fill=orange] (-2,-2) rectangle (0,0);
    % \node at (-1,-1) {$X^{(1)}$};
\end{tikzpicture}
}
% \caption{Level 1}
% 	\end{subfigure}
% 	\begin{subfigure}[b]{0.2\textwidth}
% \centering
            \subfloat[Level=2]{
\begin{tikzpicture}[scale=0.5]
    \draw[ultra thick,fill=cyan!50] (-2,-2) rectangle (2,2);
    \draw[ultra thick,fill=red!80] (-2,-2) rectangle (1,1);
    \draw[ultra thick,fill=cyan!50] (-2,-2) grid (2,2);
    \draw[ultra thick,fill=red!80] (-2,-2) grid (1,1);
    \draw[ultra thick,fill=orange] (-1,-1) rectangle (0,0);    
    % \node at (-0.5,-0.5) {\scalebox{.6}{$X^{(2)}$}};
\end{tikzpicture}
}
% \caption{Level 2}
%     \end{subfigure}
%     \begin{subfigure}[b]{0.2\textwidth}
% \centering
            \subfloat[Level=3]{
\begin{tikzpicture}[scale=0.5]
\draw[ultra thick,fill=cyan!50] (-2,-2) rectangle (1,1);
    \draw[fill=red!80] (-1,-1) rectangle (0.5,0.5);
    \draw[step=2, ultra thick,fill=cyan!50] (-2,-2) grid (2,2);
    \draw[thick,fill=cyan!50] (-2,-2) grid (2,2);
    \draw[step=0.5, fill=red!80] (-2,-2) grid (2,2);
    \draw[fill=orange] (-0.5,-0.5) rectangle (0,0);
    % \node at (-0.25,-0.26) {\scalebox{.4}{$X^{(3)}$}};
\end{tikzpicture}
}
        \subfloat{
		\label{Himg:des}
		\begin{tikzpicture}
			[%%%%%%%%%%%%%%%%%%%%%%%%%%%%%%
			box/.style={rectangle,draw=black, minimum size=0.25cm},scale=0.25
			]%%%%%%%%%%%%%%%%%%%%%%%%%%%%%%
			\node[box,fill=orange,label=right:\small{Box $B$}, anchor=west] at (0,4){};
			\node[box,fill=orange,label=right:\small{$\&$},  anchor=west] at (0,2){};
                \node[box,fill=red!80,label=right:\small{Neighbors of $B$}, anchor=west] at (3,2){};
			\node[box,fill=cyan!50,label=right:\small{$\mathcal{IL}$ of $B$}, anchor=west] at (0,0){};
			% \node[box,fill=cyan!30,label=right:\small{Other far-field boxes}, anchor=west] at (0,-2){};
		\end{tikzpicture}  
}

% \caption{Level 3}
    % \end{subfigure}
	\caption{Illustration of neighbors and interaction list at different levels in 2D.}
	% \caption{Illustration of boxes $X^{(1)}$, $X^{(2)}$, $X^{(3)}$, their neighbors (the boxes in red) and interaction list (the boxes in cyan) in 2D.}
	\label{fig:adm_condn}
\end{figure}
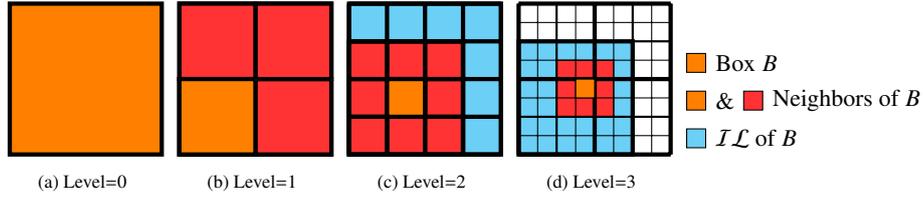

Let $K^{(l)}$ be the FMM matrix representation of $A$ at level $l$. 
% For any box $i$ at levels $1$ and $2$, its interaction list is empty, so the FMM matrix representation at levels $1$ and $2$ is the matrix itself. 
The sub-matrix notation in 2D is shown in Equation~\eqref{eq:subMatrix}, where the ordering of boxes as shown in Figure~\ref{fig:numbering} is followed. The notation $K_{ab}^{(l)}$ represents the interaction between the source points and target points of nodes $b$ and $a$ of level $l$ respectively. Consider the sub-matrix $K_{01}^{(1)}$. It gets approximated at level $2$, as in Equation~\eqref{eq:FMM_LR2}, where only those interactions between boxes which are in each other's interaction list are approximated by a low-rank matrix. In this way, at each level, all the interactions between boxes which are in each other's interaction list are approximated by a low-rank matrix. The resulting low-rank structure of the matrix $A$ at levels 2 and 3 arising in 2D problems is shown in Figure~\ref{fig:lowRankStructure}.

\begin{equationwithLineno}\label{eq:subMatrix}
    A = K^{(0)} = K^{(1)} = \begin{bmatrix}
K_{00}^{(1)} & K_{01}^{(1)} & K_{02}^{(1)} & K_{03}^{(1)}\\
K_{10}^{(1)} & K_{11}^{(1)} & K_{12}^{(1)} & K_{13}^{(1)}\\
K_{20}^{(1)} & K_{21}^{(1)} & K_{22}^{(1)} & K_{23}^{(1)}\\
K_{30}^{(1)} & K_{31}^{(1)} & K_{32}^{(1)} & K_{33}^{(1)}
\end{bmatrix}
\end{equationwithLineno}

   \begin{align}\label{eq:FMM_LR}
    K_{01}^{(1)} &= \begin{bmatrix}
K_{04}^{(2)} & K_{05}^{(2)} & K_{06}^{(2)} & K_{07}^{(2)}\\
K_{14}^{(2)} & K_{15}^{(2)} & K_{16}^{(2)} & K_{17}^{(2)}\\
K_{24}^{(2)} & K_{25}^{(2)} & K_{26}^{(2)} & K_{27}^{(2)}\\
K_{34}^{(2)} & K_{35}^{(2)} & K_{36}^{(2)} & K_{37}^{(2)}
\end{bmatrix}\\
&\approx \begin{bmatrix}
U_{0}^{(2)}A_{04}^{(2)} V_{4}^{(2)^{*}} & U_{0}^{(2)}A_{05}^{(2)} V_{5}^{(2)^{*}} & U_{0}^{(2)}A_{06}^{(2)} V_{6}^{(2)^{*}} & U_{0}^{(2)}A_{07}^{(2)} V_{7}^{(2)^{*}}\\
K_{14}^{(2)} & U_{1}^{(2)}K_{15}^{(2)} V_{5}^{(2)^{*}} & U_{1}^{(2)}K_{16}^{(2)} V_{6}^{(2)^{*}} & K_{17}^{(2)}\\
K_{24}^{(2)} & U_{2}^{(2)}K_{25}^{(2)} V_{5}^{(2)^{*}} & U_{2}^{(2)}K_{26}^{(2)} V_{6}^{(2)^{*}} & K_{27}^{(2)}\\
U_{3}^{(2)}K_{34}^{(2)} V_{4}^{(2)^{*}} & U_{3}^{(2)}K_{35}^{(2)} V_{5}^{(2)^{*}} & U_{3}^{(2)}K_{36}^{(2)} V_{6}^{(2)^{*}} & U_{3}^{(2)}K_{37}^{(2)} V_{7}^{(2)^{*}}
\end{bmatrix}\label{eq:FMM_LR2}
\end{align}

\begin{figure}[!htbp]
    \centering
    \begin{subfigure}[b]{0.3\textwidth}
    \includegraphics[width=\linewidth]{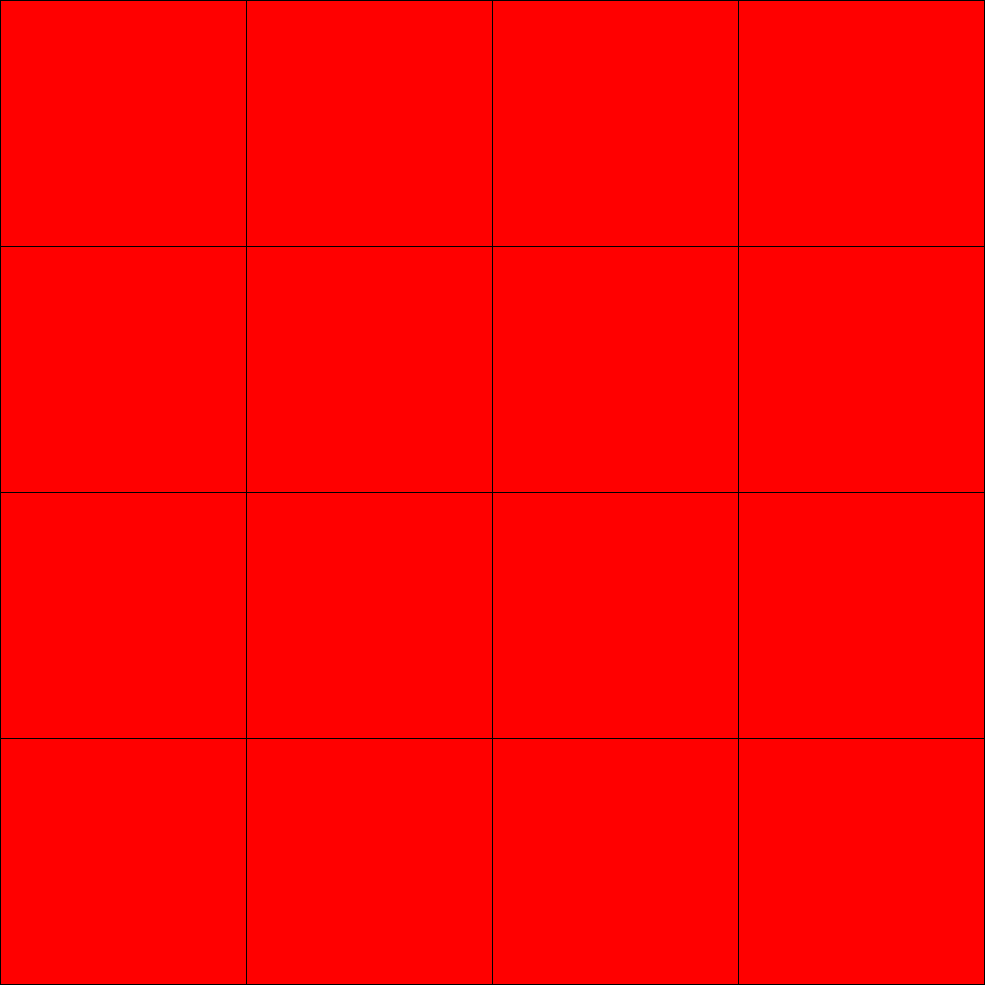}
    \end{subfigure}
    \begin{subfigure}[b]{0.3\textwidth}
    \includegraphics[width=\linewidth]{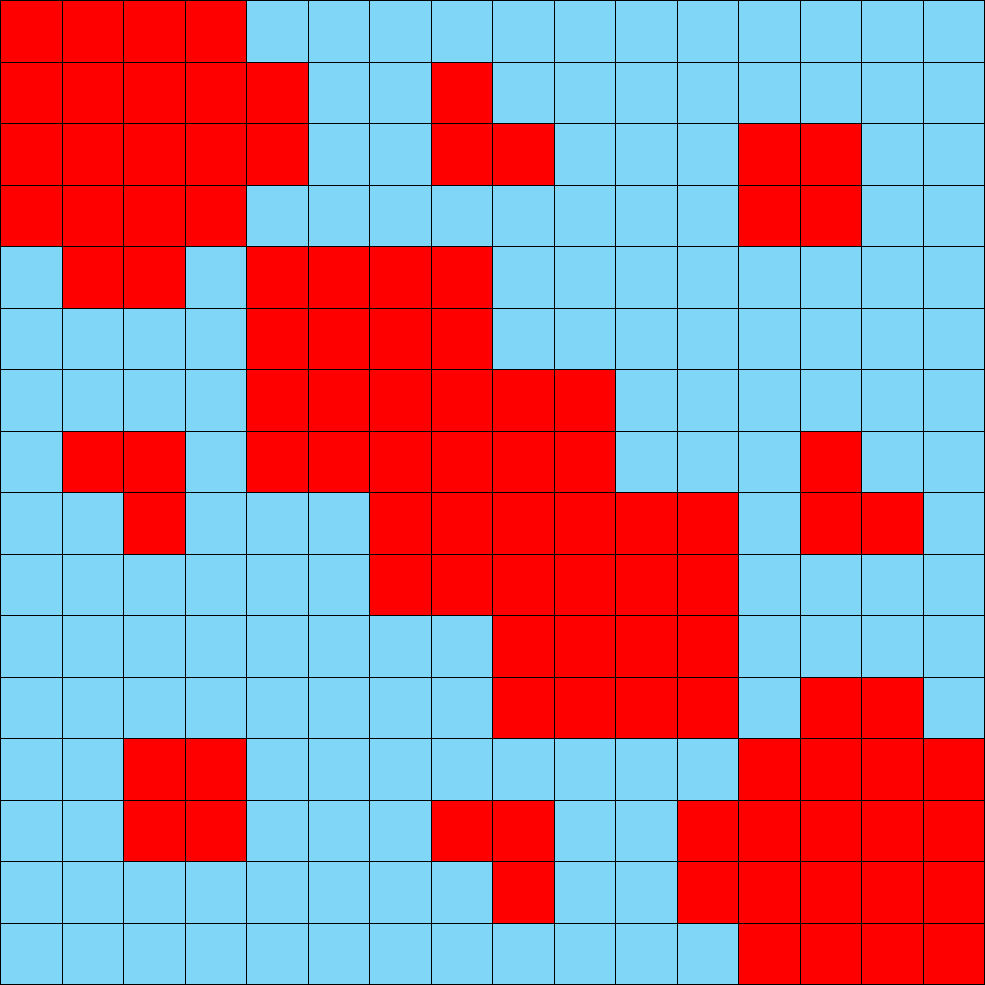}
    \end{subfigure}
    \begin{subfigure}[b]{0.3\textwidth}
    \includegraphics[width=\linewidth]{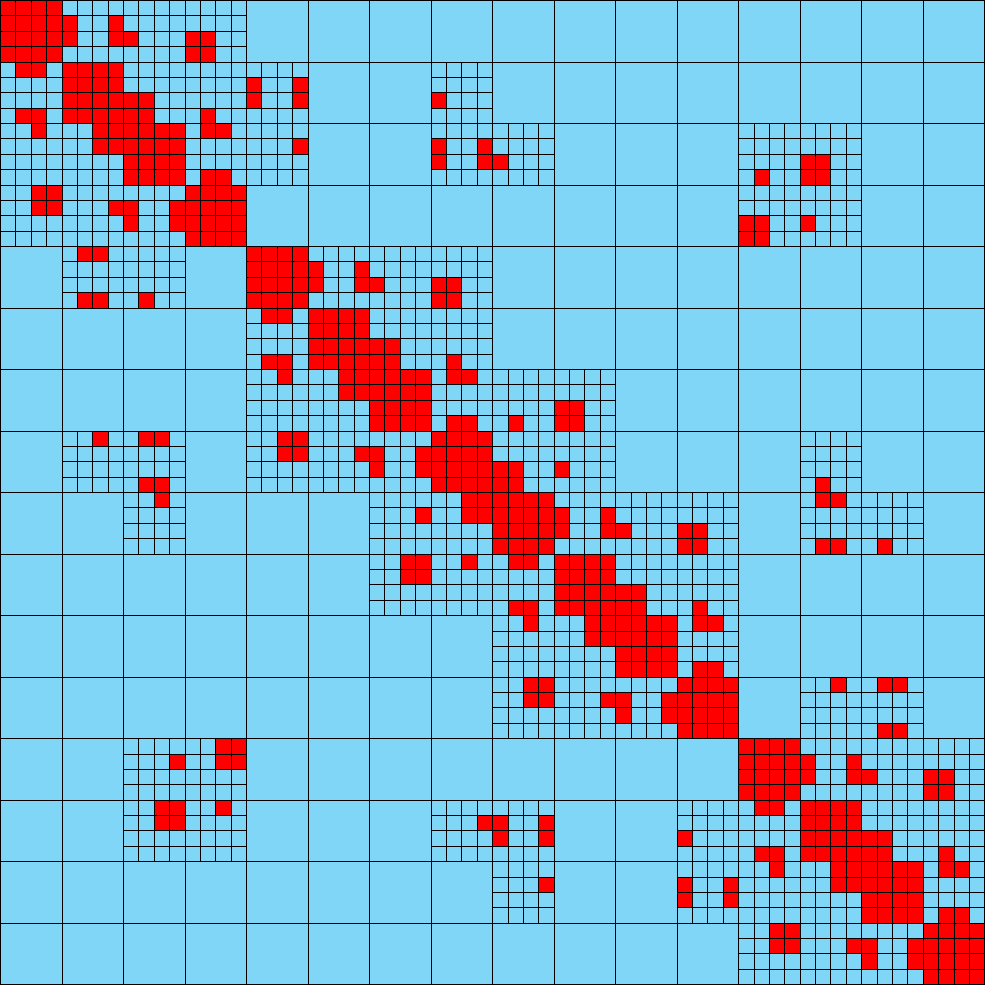}
    \end{subfigure}
    \caption{Low-Rank structure of the matrix $A$ at levels 2 and 3 arising in 2D problems}
    \label{fig:lowRankStructure}
\end{figure}

\subsection{Assembly of FMM operators}\label{ssec:NCA}
To assemble the various FMM operators we use a new Nested Cross Approximation (NNCA)~\cite{gujjula2022nca}, a nested version of Adaptive Cross Approximation (ACA), which produces nested bases.
The low-rank approximation of an admissible sub-block $A_{t^{X}s^{Y}}$, with a controlled error of $\mathcal{O}(\epsilon_{A})$, using NNCA takes the form
% An admissible matrix sub-block $A_{t^{X}s^{Y}}$ is approximated using NCA to a low-rank matrix as
\begin{equationwithLineno} \label{eq:NCA}
  A_{t^{X}s^{Y}} \approx A_{t^{X}s^{X,i}} (A_{t^{X,i}s^{X,i}})^{-1} A_{t^{X,i}s^{Y,o}}(A_{t^{Y,o}s^{Y,o}})^{-1} A_{t^{Y,o}s^{Y}}
\end{equationwithLineno}
where $t^{X,i}$, $s^{X,i}$, $t^{Y,o}$, and $s^{Y,o}$ are termed pivots, and $t^{X,i}\subset t^{X}$, $s^{X,i}\subset \mathcal{F}^{X,i}$, $t^{Y,o}\subset \mathcal{F}^{Y,o}$ and $s^{Y,o}\subset s^{Y}$.
$\mathcal{F}^{X,i}$ and $\mathcal{F}^{Y,o}$ are defined as
\begin{equationwithLineno}\label{eq:Fj}
  \mathcal{F}^{X,i} = \{s^{X'}:X'\in \mathcal{IL}(X^{(l)})\} \text{ and}
\end{equationwithLineno}
\begin{equationwithLineno}\label{eq:Fi}
  \mathcal{F}^{Y,o} = \{t^{Y'}:Y'\in \mathcal{IL}(Y^{(l)})\}.
\end{equationwithLineno}

For more details on the construction of NNCA, error bounds, and the technique we use in identifying the pivots, we direct the readers to~\cite{gujjula2022nca,bebendorf2012constructing}. Here we summarise the various FMM operators that are constructed using NNCA.
\begin{alignat}{3}
    & V_{\scriptscriptstyle B}^{{(L)^{*}}} &&= (A_{t^{B,o} s^{B,o}})^{-1} A_{t^{B,o}s^{B}} &&\text{\;\; of $B^{(L)}$}\\
    & V_{\scriptscriptstyle B}^{{\dagger(l)}^{*}} &&= (A_{t^{P,o} s^{P,o}})^{-1} A_{t^{P,o}s^{B,o}} &&\text{\;\; of $B^{(l)}$ where $P^{(l)}=\mathcal{P}(B^{(l)})$}\\
    & A_{\scriptscriptstyle BD}^{(l)} &&= A_{t^{Bi}s^{Do}} &&\text{\;\; of $B^{(l)}$ where $D\in\mathcal{IL}(B^{(l)})$} \\
    & U_{\scriptscriptstyle B}^{\dagger(l)} &&=  A_{t^{B,i}s^{P,i}} (A_{t^{P,i} s^{P,i}})^{-1}  &&\text{\;\; of $B^{(l)}$}\\
    & U_{\scriptscriptstyle B}^{(L)} &&= A_{t^{B}s^{B,i}} (A_{t^{B,i} s^{B,i}})^{-1} &&\text{\;\; of $B^{(L)}$} \\
    & K_{\scriptscriptstyle BX}^{(l)} &&= A_{t^{B}s^{X}} &&\text{\;\; of $B^{(l)}$ where $X\in\mathcal{N}(B^{(l)})$}
\end{alignat}
where $l\in\{0,1,2,\mathellipsis L\}$. We describe each of these operators in Table~\ref{table:Notations2}.

   \begin{table}[!htbp]
\centering
  \begin{tabularx}{\textwidth}{|l|X|}
    \hline
    $V_{\scriptscriptstyle B}^{{(L)}^{*}}$ & P2M (Particle To Multipole) operator of hypercube $B^{(L)}$ that translates the sources of hypercube $B^{(L)}$ to its multipoles\\ \hline
    $V_{\scriptscriptstyle B}^{{\dagger(l)}^{*}}$ & M2M (Multipole To Multipole) operator of hypercube $B^{(l)}$ that translates the multipoles of hypercube $B^{(l)}$ to it parent's multipoles \\ \hline
    $A_{\scriptscriptstyle BD}^{(l)}$ & M2L (Multipole To Local) operator between hypercubes $B^{(l)}$ and $D^{(l)}$ that finds the locals (local potential) of hypercube $B^{(l)}$ due to the multipoles of hypercube $D^{(l)}$ \\ \hline
    % captures the interaction between the multipoles of hypercube $Y$ at the locals of hypercube $X$ \\ \hline
    $U_{\scriptscriptstyle B}^{\dagger(l)}$ & L2L (Local To Local) operator of hypercube $B^{(l)}$ that translates the locals of its parent to its locals \\ \hline
    $U_{\scriptscriptstyle B}^{(L)}$ & L2P (Local To Particle) operator of hypercube $B^{(L)}$ that translates its locals to its potential \\ \hline
    $K_{\scriptscriptstyle BX}^{(l)}$ & P2P (Particle To Particle) operator between hypercubes $B^{(l)}$ and $X^{(l)}$ that finds the potential in hypercube $B^{(l)}$ due to the sources in hypercube $X^{(l)}$ \\ \hline
  \end{tabularx}
  \caption{Various FMM operators}
    \label{table:Notations2}
 \end{table}

\section{The algebraic inverse fast multipole method (AIFMM)}\label{sec:AIFMM}
AIFMM has three main steps. The first step is to construct the extended sparse matrix from the given matrix using NNCA. The second step is to perform elimination. The third step is to find the unknowns using back substitution. We now describe each of these in the following subsections.

\subsection{Construction of the extended sparse system}
% Before describing the construction of the extended sparse system, we introduce the notations for particles, multipoles, and locals of a hypercube $i^{(l)}$.
% Consider the system of equations
% The first step of the AIFMM is to construct the extended sparse matrix representation of the matrix $A$. This involves
The construction of the extended sparse matrix representation of the dense matrix $A$ involves
\begin{enumerate}
    \item the construction of a $2^{d}$ hierarchical tree of depth $L$, $\mathcal{T}^{(L)}$, as described in Subsection~\ref{ssec:Tree}.
    \item the identification of neighbors and interaction list of each hypercube at all levels of the tree, as described in Subsection~\ref{ssec:admCond}.
    \item the introduction of auxiliary variables:
    \begin{enumerate}
        \item multipoles at levels $2\leq l\leq L$, i.e., $\{y^{(l)}\}_{l=2}^{L}$ where \\$y^{(l)}=[y_{i_{1}}^{(l)}; y_{i_{2}}^{(l)}; \mathellipsis y_{i_{2^{dl}}}^{(l)}]$ and $\{y_{i_{c}}^{(l)}\}$ indicates the multipoles of hypercube $i_{c}^{(l)}$.
        \item locals at levels $2\leq l\leq L$, i.e., $\{z^{(l)}\}_{l=2}^{L}$ where \\$z^{(l)}=[z_{i_{1}}^{(l)}; z_{i_{2}}^{(l)}; \mathellipsis z_{i_{2^{dl}}}^{(l)}]$ and $\{z_{i_{c}}^{(l)}\}$ indicates the locals of hypercube $i_{c}^{(l)}$.
        % \item multipoles of all boxes at all levels $l\geq 2$ of the tree $\{y_{i_{c}}^{(l)}\}$ 
        % \item locals of all boxes at all levels $l\geq 2$ of the tree $\{z_{i_{c}}^{(l)}\}$, where $c=\{1,2,\mathellipsis,2^{dl}\}$ and $l=\{2,3,\mathellipsis,L\}$ 
    \end{enumerate}
    Here we followed MATLAB notation to represent the column vectors $y^{(l)}$ and $z^{(l)}$. 
    \begin{remark}
        For any hypercube at levels $0$ and $1$, its interaction list is empty, so only the multipoles and locals of hypercubes at levels $2\leq l\leq L$ are considered to be the auxiliary variables. 
    \end{remark}

\end{enumerate}

\subsubsection{Unknown variables of the extended sparse system}
For a hypercube $i^{(L)}$, its particles $x_{i}^{(L)}$ are defined as
\begin{equationwithLineno}
    x_{i}^{(L)} = x(t^{i})
\end{equationwithLineno}
For all non-leaf levels, the multipoles at the child level are interpreted to be the particles at the parent level, i.e., we define the particles of a hypercube $B^{(l)}$ where $l<L$, to be the multipoles of its children, as defined in Equation~\eqref{eq:particleAtNonLeafLevel}.
\begin{equationwithLineno}\label{eq:particleAtNonLeafLevel}
x_{B}^{(l)} = [y_{B_{1}}^{{(l+1)}}; y_{B_{2}}^{{(l+1)}};\mathellipsis y_{B_{2^{d}}}^{{(l+1)}}], \;\; B_{c}^{(l+1)}\in\mathcal{C}(B^{(l)})\;\;\forall c\in\{1,2,\mathellipsis,2^{d}\}
\end{equationwithLineno}
Here MATLAB notation is followed to represent the column vector $x_{B}^{(l)}$.
% where  and MATLAB notation is followed to represent the column vector $x_{B}^{(l)}$. 
Accordingly, the child M2M operators get combined to form the parent's P2M operator, and similarly, the child L2L operators get combined to form the parent's L2P operator. This has been written in detail in Table~\ref{table:Notations2_2}.

   \begin{table}[!htbp]
\centering
  \begin{tabularx}{\textwidth}{|l|X|}
    \hline$V_{\scriptscriptstyle B}^{{(l)}^{*}}$ & P2M (Particle To Multipole) operator of hypercube $B^{(l)}$ that translates the particles of hypercube $B^{(l)}$ to its multipoles. $V_{\scriptscriptstyle B}^{{(l)}^{*}}=[V_{\scriptscriptstyle B_{1}}^{{\dagger(l+1)}^{*}}\; V_{\scriptscriptstyle B_{2}}^{{\dagger(l+1)}^{*}} \mathellipsis V_{\scriptscriptstyle B_{2^{d}}}^{{\dagger(l+1)}^{*}}]$ where $l\in\{0,1,2\mathellipsis, L-1\}$, $B_{c}^{(l+1)}\in\mathcal{C}(B^{(l)})$, and $c\in\{1,2,3,\mathellipsis,2^{d}\}$\\ \hline
    $U_{\scriptscriptstyle B}^{(l)}$ & L2P (Local To Particle) operator of hypercube $B^{(l)}$ that translates the locals of hypercube $B^{(l)}$ to the potential of its children. $U_{\scriptscriptstyle B}^{{(l)}}=[U_{\scriptscriptstyle B_{1}}^{{\dagger(l+1)}}\; U_{\scriptscriptstyle B_{2}}^{{\dagger(l+1)}} \mathellipsis U_{\scriptscriptstyle B_{2^{d}}}^{{\dagger(l+1)}}]$ where $l\in\{0,1,2\mathellipsis, L-1\}$, $B_{c}^{(l+1)}\in\mathcal{C}(B^{(l)})$, and $c\in\{1,2,3,\mathellipsis,2^{d}\}$\\ \hline
  \end{tabularx}
  \caption{P2M and L2P operators at non-leaf level}
    \label{table:Notations2_2}
 \end{table}
 
Each hypercube $i^{(l)}$ at level $2\leq l\leq L$ is therefore associated with the unknown variables described in Table~\ref{table:Notations3}.
   \begin{table}[!htbp]
\centering
  \begin{tabular}{|l|l|}
    \hline
     $x_{i}^{(l)}$ & particles of hypercube $i^{(l)}$\\ \hline
     $y_{i}^{(l)}$ & multipoles of hypercube $i^{(l)}$\\ \hline
     $z_{i}^{(l)}$ & locals of hypercube $i^{(l)}$\\ \hline
  \end{tabular}
  \caption{Unknown variables of the extended sparse system}
    \label{table:Notations3}
 \end{table}

\subsubsection{Governing equations of the extended sparse system}
The equations governing the multipoles and the potential are given below.

At the leaf level,
\begin{alignat}{3}
    & y_{i}^{(L)} &&= V_{i}^{{(L)}^{*}}x_{i}^{(L)} && \text{ denoted as Equation $\text{Y}_{i}^{(L)}$}\\
    & b_{i}^{(L)} &&= U_{i}^{(L)}z_{i}^{(L)} + \sum_{j^{(L)}\in\mathcal{N}(i^{(L)})}K_{ij}^{(L)} x_{j}^{(L)} && \text{ denoted as Equation $\text{X}_{i}^{(L)}$}\label{eq:EqX}
    % & z_{i}^{(L)} &&= U_{i}^{\dagger(L)}z_{i}^{(L-1)} && \text{ denoted as Equation $\text{Z}_{i}^{(L)}$}
\end{alignat}
where $b_{i}^{(L)} = [b_{c_{1}}, b_{c_{2}},\mathellipsis,b_{c_{m_{i}}}] \text{ and } \{c_{e}\}_{e=1}^{m_{i}}= t^{i}$.

For $l\in\{0,\mathellipsis,L-1\}$,
\begin{subequations}
\begin{alignat}{3}
    & y_{i}^{(l)} && = \sum_{i'^{(l)}\in\mathcal{C}(i^{(l)})}V_{i'}^{\dagger(l)^{*}}y_{i'}^{(l+1)}\label{eq:particlesatl} && \\
    &            && = V_{i}^{{(l)}^{*}}x_{i}^{(l)}\label{eq:particlesatl2}   && \text{ denoted as Equation $\text{Y}_{i}^{(l)}$}
\end{alignat}\
\end{subequations}
wherein we have combined the multipoles at a child level to form the particles at the parent level as in Equation~\eqref{eq:particleAtNonLeafLevel}.

For $l\in\{0,1,\mathellipsis,L\}$, the equation governing the locals is given by
\begin{equationwithLineno}\label{eq:EqXl}
    z_{i}^{(l)} = U_{i}^{\dagger(l)}z_{i'}^{(l-1)} + \sum_{j^{(l)}\in IL(i^{(l)})}A_{ij}^{(l)}y_{j}^{(l)} 
    % \text{ denoted as Equation $\text{X}_{i}^{(l)}$}\\
\end{equationwithLineno}
where $i'^{(l-1)}=\mathcal{P}(i^{(l)})$. 

For $l\in\{1,2,\mathellipsis,L-1\}$, Equation~\eqref{eq:EqXl} takes the form of Equation~\eqref{eq:EqX}, written out in Equation~\eqref{eq:EqXl2}, when the multipoles at level $l$ are combined to form the particles at level $l-1$ as in Equation~\eqref{eq:particleAtNonLeafLevel}.
\begin{equationwithLineno}
 b_{i}^{(l)} = U_{i}^{(l)}z_{i}^{(l)} + \sum_{j^{(l)}\in N(i^{(l)})}K_{ij}^{(l)}x_{j}^{(l)}  \text{ denoted as Equation $\text{X}_{i}^{(l)}$}\label{eq:EqXl2}
 % , $l\in\{1,2,\mathellipsis,L-1\}$}
\end{equationwithLineno}
Here $z_{i}^{(1)}=\vec{0}$, $U_{i}^{(l)}$ is defined in Table~\ref{table:Notations2} and
\begin{equationwithLineno}
    K_{ij}^{(l)} = 
    \begin{bmatrix}
        A_{i_{1}j_{1}}^{(l+1)} & A_{i_{1}j_{2}}^{(l+1)} & \hdots & A_{i_{1}j_{2^{d}}}^{(l+1)}\\
        A_{i_{2}j_{1}}^{(l+1)} & A_{i_{2}j_{2}}^{(l+1)} & \hdots & A_{i_{2}j_{2^{d}}}^{(l+1)}\\
        \vdots & \ddots &        & \vdots\\
        A_{i_{2^{d}}j_{1}}^{(l+1)} & A_{i_{2^{d}}j_{2}}^{(l+1)} & \hdots & A_{i_{2^{d}}j_{2^{d}}}^{(l+1)}        
    \end{bmatrix},\;
        b_{i}^{(l)} = 
    \begin{bmatrix}
        z_{i_{1}}^{(l+1)}\\
        z_{i_{2}}^{(l+1)}\\
        \vdots\\
        z_{i_{2^{d}}}^{(l+1)}
    \end{bmatrix}.
\end{equationwithLineno}

% We now summarise the equations below. For $l\in\{2,3,\mathellipsis,L\}$
    \begin{tcolorbox}[colback=white!5!white,colframe=black!75!black,title=\textbf{Summary of equations $\forall l\in\{1,2,\mathellipsis,L\}$}]
\begin{alignat}{3}
    & y_{i}^{(l)} &&= V_{i}^{{(l)}^{*}}x_{i}^{(l)} && \text{ denoted as Equation $\text{Y}_{i}^{(l)}$}\label{eq:EquationY}\\
    & b_{i}^{(l)} &&= U_{i}^{(l)}z_{i}^{(l)} + \sum_{j^{(l)}\in\mathcal{N}(i^{(l)})}K_{ij}^{(l)} x_{j}^{(l)} && \text{ denoted as Equation $\text{X}_{i}^{(l)}$}
    \label{eq:EquationX}
\end{alignat} 
% For $l\in\{1,2,\mathellipsis,L-1\}$, Equation~\eqref{eq:particleAtNonLeafLevel}, can equivalently be written as
    \end{tcolorbox}

The various FMM operators: L2L/L2P, M2M/P2M, and M2L are obtained using NNCA as described in Subsection~\ref{ssec:NCA}.

The system of equations with the unknowns $x$ and the auxiliary variables is given in Equation~\eqref{eq:extendedMatrix}.
\begin{equationwithLineno}\label{eq:extendedMatrix}
   \tilde{\tilde{A}}
    \begin{bmatrix}
        x\\
        z^{(L)}\\
        y^{(L)}\\
        z^{(L-1)}\\
        y^{(L-1)}\\
        \vdots\\
        z^{(l)}\\
        y^{(l)}\\
        \vdots\\
        z^{(2)}\\
        y^{(2)}
    \end{bmatrix}= 
    \begin{bmatrix}
        b\\
        0\\
        0\\
        0\\
        0\\
        \vdots\\
        0\\
        0\\
        \vdots\\
        0\\
        0
    \end{bmatrix}
\end{equationwithLineno}
The ordering of equations or rows is given by
\begin{equationwithLineno*}
\begin{split}
\{ & \text{X}_{i_1}^{(L)}, \text{ Y}_{i_1}^{(L)}, \text{ X}_{i_2}^{(L)}, \text{ Y}_{i_2}^{(L)}, \mathellipsis, \text{ X}_{i_{2^{dL}}}^{(L)}, \text{ Y}_{i_{2^{dL}}}^{(L)}, \\
   & \text{ X}_{i_1}^{(L-1)}, \text{ Y}_{i_1}^{(L-1)}, \text{ X}_{i_2}^{(L-1)}, \text{ Y}_{i_2}^{(L-1)}, \mathellipsis, \text{ X}_{i_{2^{d(L-1)}}}^{(L-1)}, \text{ Y}_{i_{2^{d(L-1)}}}^{(L-1)}, \\
   & \mathellipsis,\\
   & \text{ X}_{i_1}^{(2)}, \text{ Y}_{i_1}^{(2)}, \text{ X}_{i_2}^{(2)}, \text{ Y}_{i_2}^{(2)}, \mathellipsis, \text{ X}_{i_{2^{2d}}}^{(2)}, \text{ Y}_{i_{2^{2d}}}^{(2)}, \\
   & \text{ X}_{i_1}^{(1)}, \text{ X}_{i_2}^{(1)}, \mathellipsis, \text{ X}_{i_{2^{d}}}^{(1)}
\}.
\end{split}
\end{equationwithLineno*}
A reordering of unknowns or columns of Equation~\eqref{eq:extendedMatrix} is performed, such that $z^{(l)}$ is interleaved in between $x^{(l)}$ for all levels $l$ from $L$ to $2$ as follows\\
\begin{equationwithLineno*}
\begin{split}
[ & x_{i_1}^{{(L)}^{*}} z_{i_1}^{{(L)}^{*}} x_{i_2}^{{(L)}^{*}} z_{i_2}^{{(L)}^{*}} \mathellipsis x_{i_{2^{dL}}}^{{(L)}^{*}} z_{i_{2^{dL}}}^{{(L)}^{*}} \\
  & x_{i_1}^{{(L-1)}^{*}} z_{i_1}^{{(L-1)}^{*}} x_{i_2}^{{(L-1)}^{*}} z_{i_2}^{{(L-1)}^{*}} \mathellipsis x_{i_{2^{d(L-1)}}}^{{(L-1)}^{*}} z_{i_{2^{d(L-1)}}}^{{(L-1)}^{*}} \\
  & \mathellipsis \\ 
  & x_{i_1}^{{(2)}^{*}} z_{i_1}^{{(2)}^{*}} x_{i_2}^{{(2)}^{*}} z_{i_2}^{{(2)}^{*}} \mathellipsis x_{i_{2^{2d}}}^{{(2)}^{*}} z_{i_{2^{2d}}}^{{(2)}^{*}} \\ 
  & x_{i_1}^{{(1)}^{*}} x_{i_2}^{{(1)}^{*}} \mathellipsis x_{i_{2^{d}}}^{{(1)}^{*}}
]^{*}.
\end{split}
\end{equationwithLineno*}
It ensures that, when an elimination in standard ordering is performed, the fill-ins occur symmetrically. 
% for all levels $l$ from $L$ to $2$.
Let the new system after reordering be 
\begin{equationwithLineno}\label{eq:extendedSparseSystem}
\tilde{A}\tilde{x}=\tilde{b}
\end{equationwithLineno}
The structures of the matrices $\tilde{A}$ constructed out of a 2D problem at levels 2 and 3 are illustrated in Figures~\ref{fig:extendedSparseLevel2} and~\ref{fig:extendedSparse} respectively.

\begin{figure}[!htbp]
    \centering
        \begin{subfigure}[b]{\textwidth}
            \centering
    \includegraphics[width=0.5\linewidth]{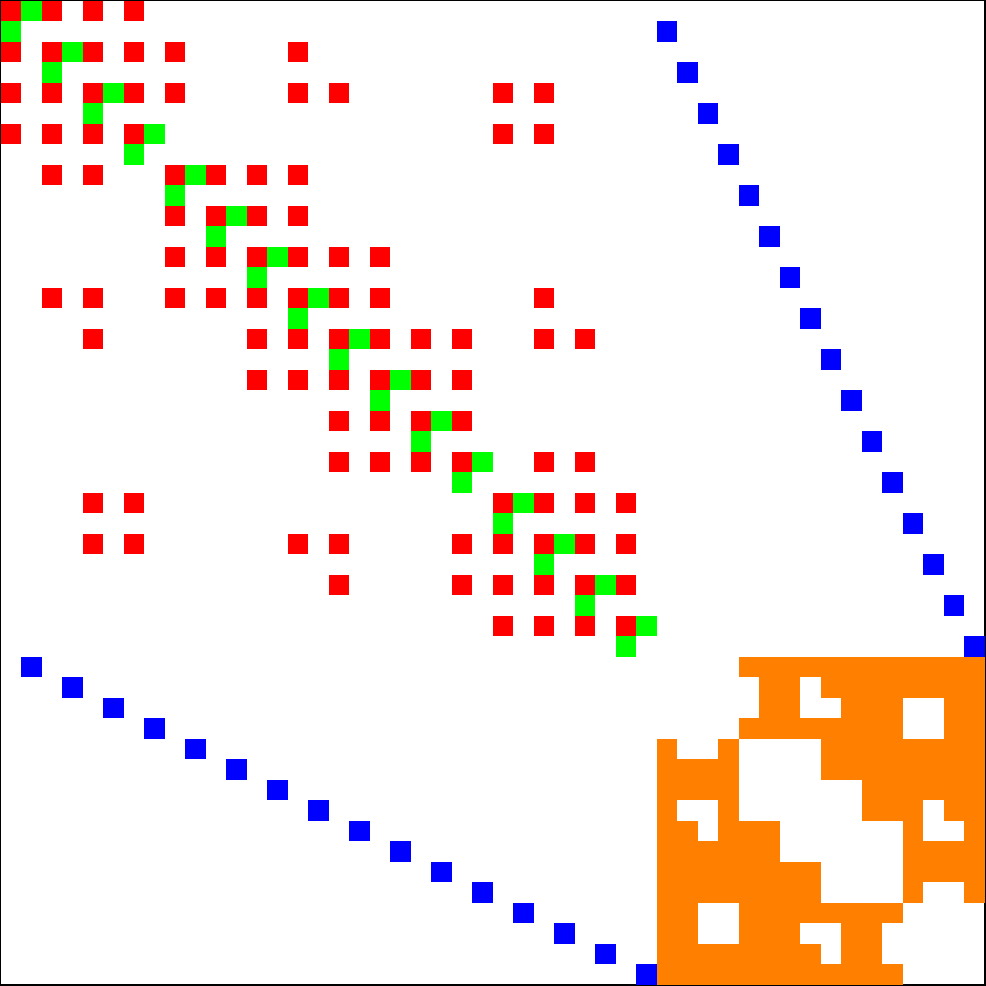}
        \end{subfigure}
        \newline
        \begin{subfigure}[b]{0.72\textwidth}
            \centering
        \begin{tikzpicture}
            [%%%%%%%%%%%%%%%%%%%%%%%%%%%%%%
            box/.style={rectangle,draw=none, minimum size=0.25cm},scale=0.5
            ]%%%%%%%%%%%%%%%%%%%%%%%%%%%%%%
            \node[box,fill=red,label=right:$K_{ij}^{(l)}$, anchor=west] at (-4,6){};
            \node[box,fill=orange,label=right:$A_{ij}^{(l)}$, anchor=west] at (-4,4.8){};
            \node[box,fill=green,label=right:$U_{i}^{(l)}\text{,}V_{i}^{(l)}\text{,}U_{i}^{\dagger(l)}\text{,}V_{i}^{\dagger(l)}$, anchor=west] at (4,6){};
            \node[box,fill=blue,label=right:-I (negative identity matrix), anchor=west] at (4,4.8){};
        \end{tikzpicture}
        % \caption*{ }
        \end{subfigure}
    \caption{Structure of the extended sparse matrix at level 2 arising in 2D problems}
    \label{fig:extendedSparseLevel2}
\end{figure}

\begin{figure}[!htbp]
    \centering
        \begin{subfigure}[b]{\textwidth}
            \centering
    \includegraphics[width=0.5\linewidth]{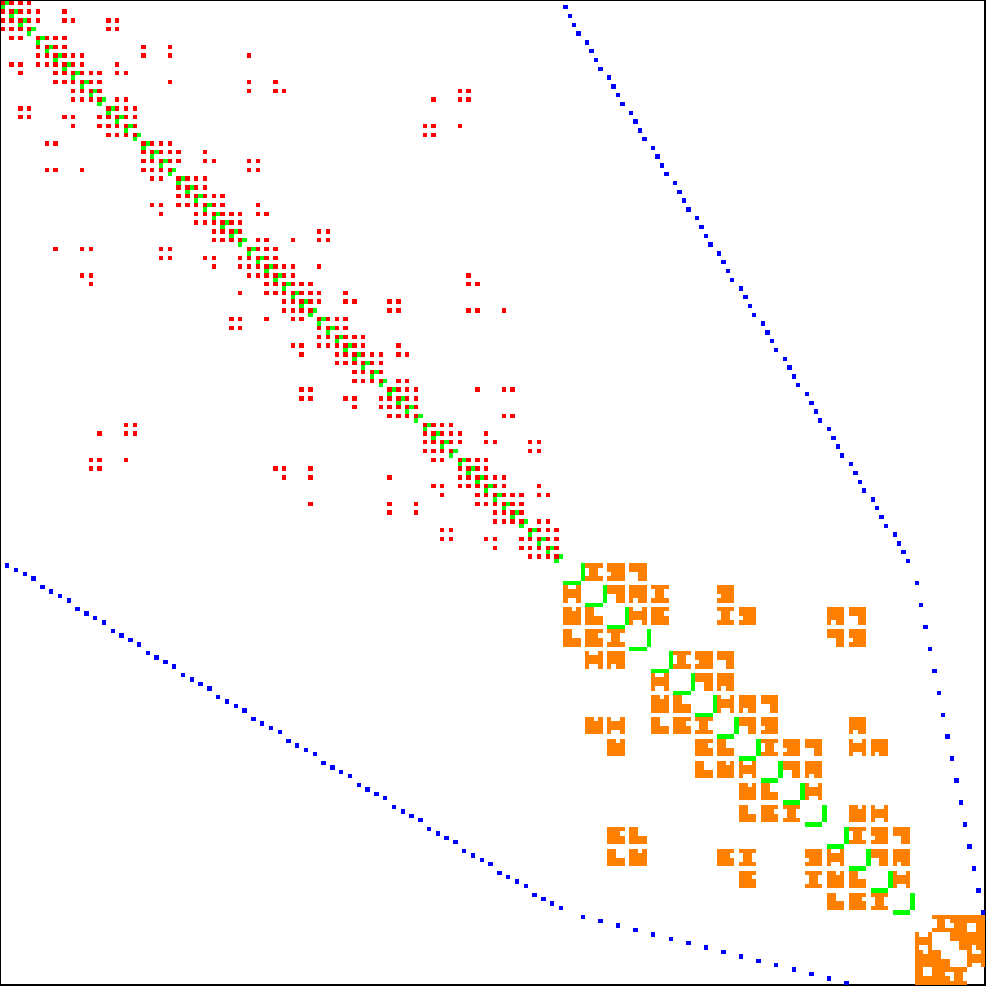}
        \end{subfigure}
        \newline
        \begin{subfigure}[b]{0.72\textwidth}
            \centering
        \begin{tikzpicture}
            [%%%%%%%%%%%%%%%%%%%%%%%%%%%%%%
            box/.style={rectangle,draw=none, minimum size=0.25cm},scale=0.5
            ]%%%%%%%%%%%%%%%%%%%%%%%%%%%%%%
            \node[box,fill=red,label=right:$K_{ij}^{(l)}$, anchor=west] at (-4,6){};
            \node[box,fill=orange,label=right:$A_{ij}^{(l)}$, anchor=west] at (-4,4.8){};
            \node[box,fill=green,label=right:$U_{i}^{(l)}\text{,}V_{i}^{(l)}\text{,}U_{i}^{\dagger(l)}\text{,}V_{i}^{\dagger(l)}$, anchor=west] at (4,6){};
            \node[box,fill=blue,label=right:-I (negative identity matrix), anchor=west] at (4,4.8){};
        \end{tikzpicture}
        % \caption*{ }
        \end{subfigure}
    \caption{Structure of the extended sparse matrix at level 3 arising in 2D problems}
    \label{fig:extendedSparse}
\end{figure}

\subsection{Elimination or the Factorization Phase}
Given the extended sparse system of Equations~\eqref{eq:extendedSparseSystem}, the next task is to solve for the unknowns $\tilde{x}$. It is solved using Gaussian Elimination followed by Back Substitution. 
The elimination phase is not the naive Gaussian Elimination but the elimination process is interleaved with the compression and redirection of fill-ins corresponding to well-separated hypercubes through non-zero entries. In this subsection, we describe this process. 

% The second step of AIFMM is the Elimination or the Factorization Phase, wherein we 
We show in Figures~\ref{fig:GraphLevel2} and~\ref{fig:graph}, the graphical representation of the extended sparse matrix, constructed out of a 2D problem at levels 2 and 3 respectively. For better clarity, we show in Figure~\ref{fig:cropGraph} a partial graph of the graph shown in Figure~\ref{fig:graph}. 
The nodes correspond to the variables and the incoming edges to a node constitute an equation.
% The weight on an outgoing edge from a node corresponds to the weight of the node in an equation corresponding to the the tail of the edge. 
The same color notation followed in Figures~\ref{fig:extendedSparseLevel2} and~\ref{fig:extendedSparse} is followed in Figures~\ref{fig:GraphLevel2}, ~\ref{fig:cropGraph} and~\ref{fig:graph}.

\begin{figure}[!htbp]
    \centering
    \includegraphics[width=0.7\linewidth]{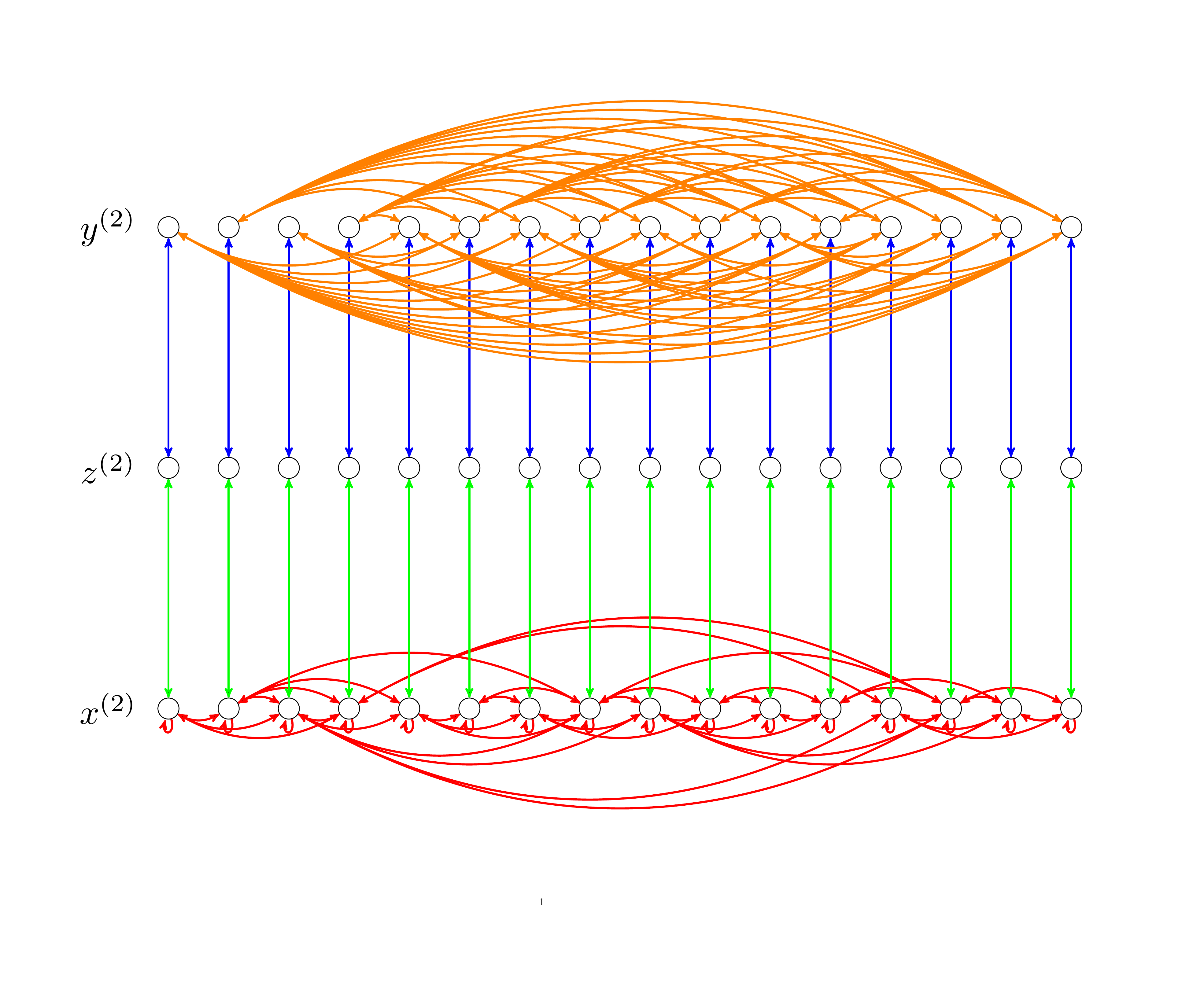}
    \caption{Graph of the Extended Sparse system at level 2}
    \label{fig:GraphLevel2}
\end{figure}

\begin{figure}[!htbp]
    \centering
    \includegraphics[width=0.8\linewidth]{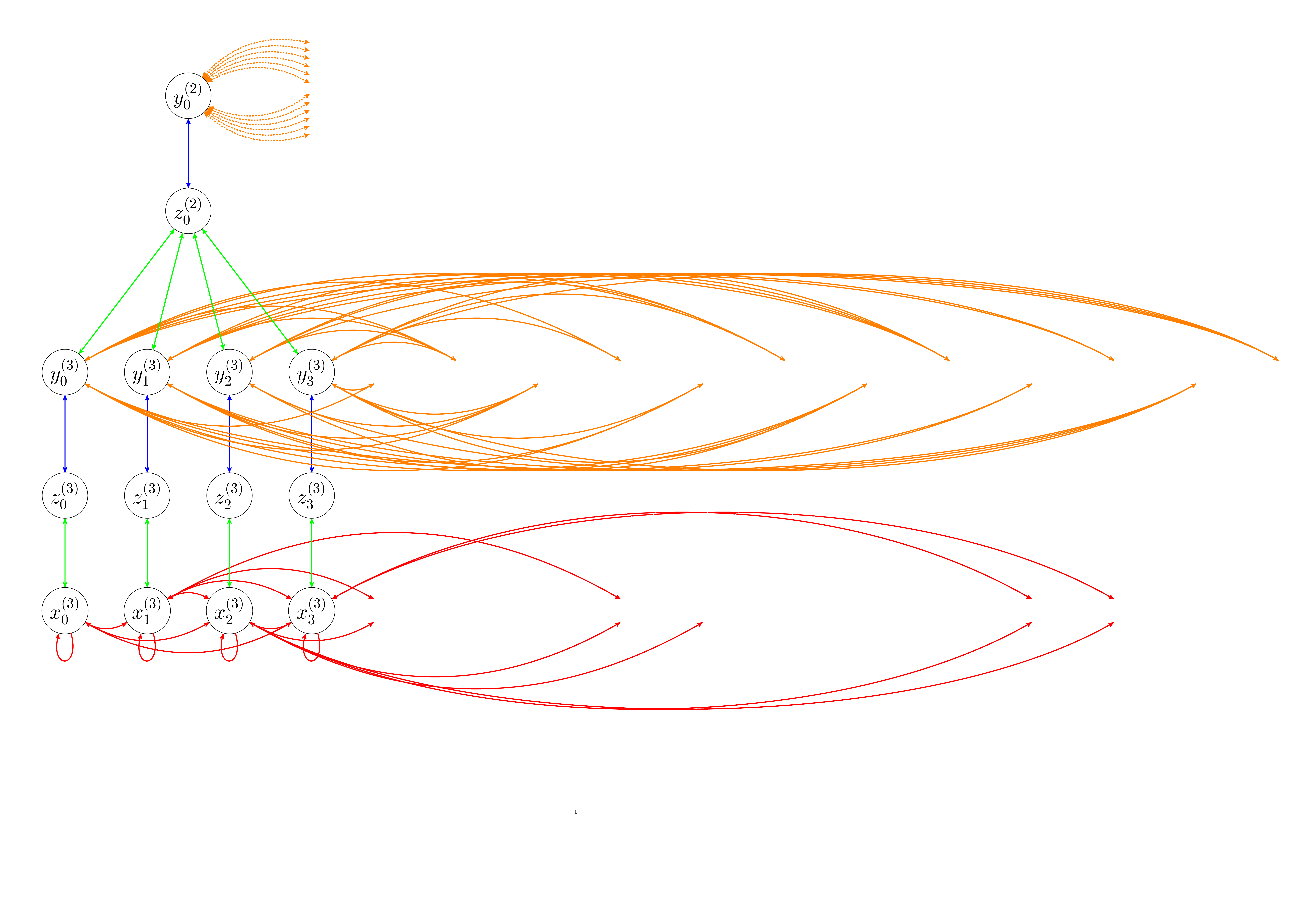}
    \caption{Partial graph of the Extended Sparse system at level 3, showing few nodes and edges for better readability}
    \label{fig:cropGraph}
\end{figure}

\begin{landscape}
\begin{figure}[!htbp]
    \centering
    \includegraphics[width=0.9\linewidth]{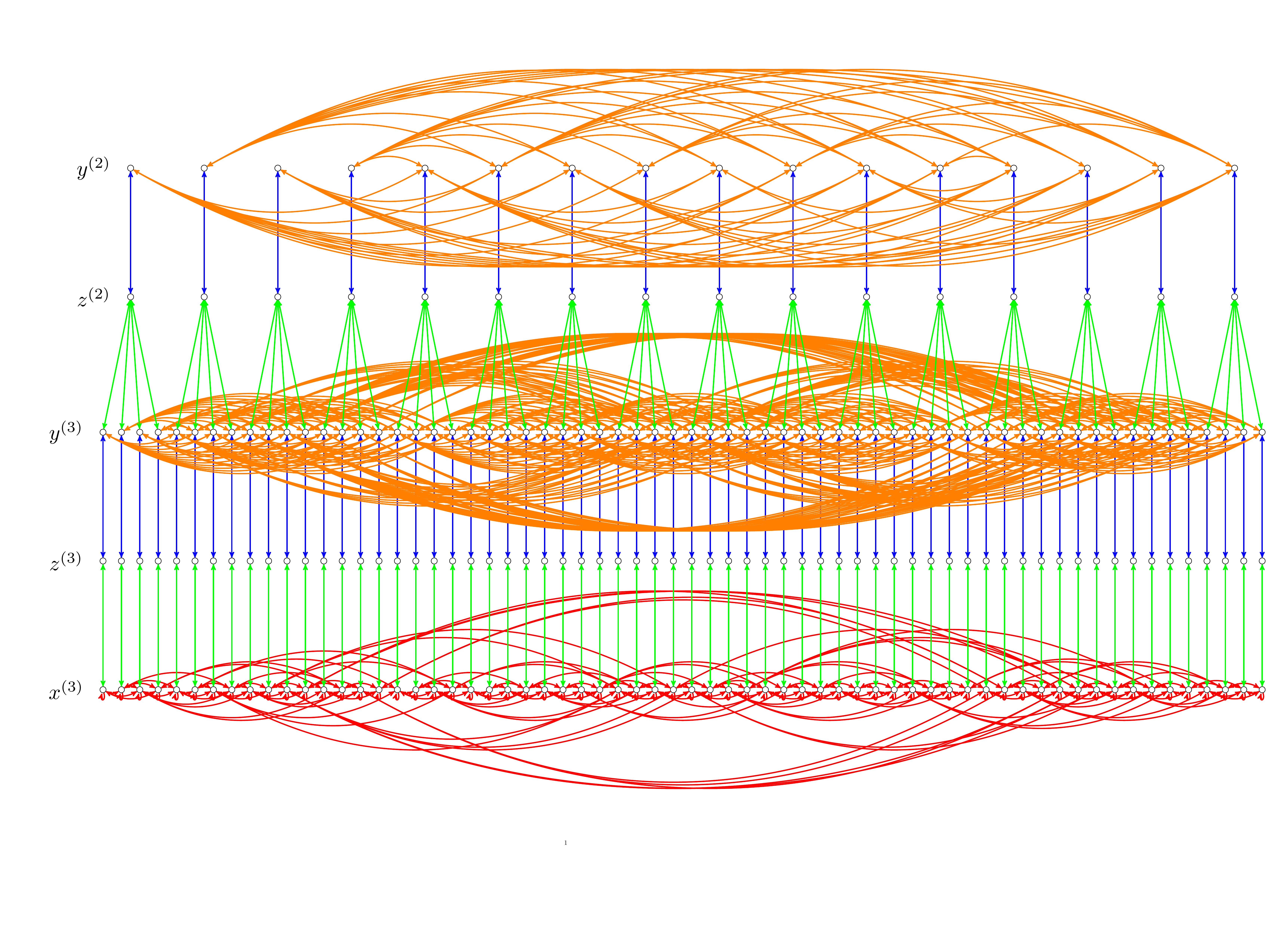}
    \caption{Graph of the Extended Sparse system at level 3}
    \label{fig:graph}
\end{figure}
\end{landscape}

% Since the fill-in creation is explained in detail through the graphs of~\cite{ambikasaran2014inverse}, we do not repeat it here. Instead in this article, we concentrate on the compression and redirection of the fill-ins and present a more efficient elimination algorithm.
% Three kinds of fill-ins: P2P, P2L, and M2P as described in Table~\ref{table:Notations}, get created. 
% Each of these fill-ins can be interpreted as an interaction between boxes or nodes. 
% Before presenting the fill-in compression and redirection, we present some notations in Table~\ref{table:Notations}.
% In the rest of the section, we choose to omit the superscripts that indicate the level of the cell to simplify notations.
% For a detailed understanding of the different kinds of fill-in creation, we refer the readers to the graphs of~\cite{ambikasaran2014inverse}.

We eliminate the variables in standard ordering, i.e., the order in which the variables are arranged. 
% The Elimination phase begins with the elimination of unknowns of Equations~\eqref{eq:EquationY} and~\eqref{eq:EquationX}. 
% Eliminating a node would result in an update of the graph that involves nullifying some edges, updating some edges, and creation of new edges which are termed to be the fill-ins. 
When variables $x_{i}^{(l)}$ and $z_{i}^{(l)}$ get eliminated, which we term as the hypercube getting eliminated, it results in an update of the graph that involves nullification of some edges, updation of some edges and creation of new edges. The new edges that get created are termed fill-ins. The various fill-ins that get created are described in Table~\ref{table:Notations}.
% Instead in this article, we concentrate on the compression and redirection of the fill-ins and present a more efficient elimination algorithm.
   \begin{table}[!htbp]
\centering
  \begin{tabularx}{\textwidth}{|l|X|}
    \hline
    % $i^{(l)}$ & Box $i$ at level $l$ of the tree\\ \hline
    % $\mathcal{P}(B)$ & Parent of $B$ \\ \hline
    % $\mathcal{C}(i_{k})$ & $\{j_{k+1}':\text{ }j_{k+1}'\text{ is a child of }i_{k}\}$ \\ \hline
    % $\mathcal{N}(i_{k}$ & Neighbors of $i_{k}$; For a box $i_{k}$, it consists of boxes at the same tree level as $i_{k}$ that do not follow the admissibility condition for low-rank.\\ \hline
    % $\mathcal{IL}(i_{k})$ & Interaction list of box $i_{k}$ that consists of children of $i_{k}$'s parent's neighbors that are not its neighbors.\\ \hline
    % $V^{T}_{i_{(k)}}$ & The P2M/M2M operator of $i_{k}$ at level $k$, i.e., i) for a box belonging to non-leaf level, it represents the M2M operator, that translates the multipoles of box $k$ to the multipoles of its parent. ii) for a box belonging to leaf level, it represents the P2M operator, that translates the particles of $i^{k}$ to its multipoles.\\ \hline
    % $U_{i_{k}}$ & The L2P/L2L operator of $i_{k}$, i.e., i) for a box belonging to non-leaf level, it represents the L2L operator, that translates the locals of its parent to the locals of itself. ii) for a box belonging to leaf level, it represents the L2P operator, which translates the locals of its parent to the particles of itself.\\ \hline 
    % $A_{i_{k}j_{k}}$ & The M2L operator that represents the contribution of the locals of $j_{k}$ at the multipoles of $i_{k}$.\\ \hline  
    % $K_{i_{k}j_{k}}$ & The P2P operator that represents the contribution of the particles of $j_{k}$ at the particles of $i_{k}$.\\ \hline  
    $P2P_{ij}^{(l)}$ & The P2P fill-in that represents the potential of hypercube $i^{(l)}$ due to the particles of hypercube $j^{(l)}$\\ \hline  
    $M2P_{ij}^{(l)}$ & The M2P fill-in that represents the potential of hypercube $i^{(l)}$ due to the multipoles of hypercube $j^{(l)}$\\ \hline  
    $P2L_{ij}^{(l)}$ & The P2L fill-in that represents the local potential of hypercube $i^{(l)}$ due to the particles of hypercube $j^{(l)}$ \\ \hline 
    $M2L_{ij}^{(l)}$ & The M2L fill-in that represents the local potential of hypercube $i^{(l)}$ due to the multipoles of hypercube $j^{(l)}$\\ \hline  
  \end{tabularx}
  \caption{Fill-in interactions}
    \label{table:Notations}
 \end{table}

% The following fill-ins, that are described in Table~\ref{table:Notations}, get created.
Upon elimination of hypercube $i^{(l)}$, fill-ins among its neighboring hypercubes get created as described below. For $p^{(l)},q^{(l)}\in\mathcal{N}(i^{(l)})$,
\begin{enumerate}
    \item if $p^{(l)}$ and $q^{(l)}$ have been eliminated, $M2L_{pq}^{(l)}$ and $M2L_{qp}^{(l)}$ get created.  
    \item if $p^{(l)}$ has been eliminated and $q^{(l)}$ has not been eliminated, fill-ins $P2L_{pq}^{(l)}$ and $M2P_{qp}^{(l)}$ get created.  
    \item if $q^{(l)}$ has been eliminated and $p^{(l)}$ has not been eliminated, fill-ins $P2L_{qp}^{(l)}$ and $M2P_{pq}^{(l)}$ get created.  
    \item if $p^{(l)}$ and $q^{(l)}$ have not been eliminated, $P2P_{pq}^{(l)}$ and $P2P_{qp}^{(l)}$ get created.
\end{enumerate}
For a more detailed understanding of the fill-in creation, we refer the readers to the graphs in~\cite{ambikasaran2014inverse}. 

%%%%%%%%%%%%%%%%%%%%%%%%%%%%%%%%%%%%%%%%%%%%%%%%%%%%%%%%%%%%%%%%%
\begin{theorem}\label{th:fillin}
    Consider a hypercube $i^{(L)}$ and hypercubes $\{j^{(L)},k^{(L)}\}\in\mathcal{N}(i^{(L)})$ such that $j^{(L)}\in\mathcal{IL}(k^{(L)})$. Let $N$ target points and $N$ source points be distributed uniformly in each of the hypercubes. 
    % If $j^{(L)}$ and $k^{(L)}$ are not eliminated, and upon elimination of box $i^{(L)}$, the P2P fill-in $K^{(L)}_{jk}$ gets created as $K^{(L)}_{jk} := -K^{(L)}_{ji}K^{(L)}_{ii}K^{(L)}_{ik}$ and its rank grows as $\mathcal{O}\left(N^{\frac{d-1}{d}}\log^{d}(N)\right)$.
  % The rank of the P2P fill-in $P2P^{(L)}_{jk}$ that gets created upon elimination of box $i^{(L)}$, if $j^{(L)}$ and $k^{(L)}$ are not eliminated, grows as $\mathcal{O}\left(N^{\frac{d-1}{d}}\log^{d}(N)\right)$. 
  If the hypercubes $j^{(L)}$ and $k^{(L)}$ are not eliminated from the extended sparse system, then the P2P fill-in $P2P^{(L)}_{jk}$ that gets created upon elimination of hypercube $i^{(L)}$ is rank deficient.
\end{theorem}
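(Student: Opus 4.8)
The plan is to write the fill-in $P2P^{(L)}_{jk}$ as an explicit Schur complement produced by eliminating the block $(x_i^{(L)},z_i^{(L)})$, and then to exhibit a forced null space whose dimension equals the number of multipoles of $i^{(L)}$.

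First I would identify the pivot block. From the governing equations $\mathrm{X}_i^{(L)}$ and $\mathrm{Y}_i^{(L)}$ in~\eqref{eq:EquationX}--\eqref{eq:EquationY}, the unknowns $x_i^{(L)}$ and $z_i^{(L)}$ appear on the diagonal only through $\mathrm{X}_i^{(L)}$ (giving $K_{ii}^{(L)}$ on $x_i^{(L)}$ and $U_i^{(L)}$ on $z_i^{(L)}$) and through $\mathrm{Y}_i^{(L)}$ (giving $V_i^{(L)*}$ on $x_i^{(L)}$). Hence eliminating the hypercube $i^{(L)}$ is a block pivot on
\[
D_i=\begin{bmatrix}K_{ii}^{(L)} & U_i^{(L)}\\ V_i^{(L)*} & 0\end{bmatrix},
\]
of size $(N+r_i)\times(N+r_i)$, where $r_i\ge 1$ is the number of multipoles (equivalently locals) of $i^{(L)}$. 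Because $j^{(L)},k^{(L)}\in\mathcal N(i^{(L)})$, the only coupling of $i^{(L)}$ to $k^{(L)}$ is the block $K_{ik}^{(L)}$ multiplying $x_k^{(L)}$ in $\mathrm{X}_i^{(L)}$, and the only coupling of $j^{(L)}$ to $i^{(L)}$ is the block $K_{ji}^{(L)}$ multiplying $x_i^{(L)}$ in $\mathrm{X}_j^{(L)}$ (the local variables do not couple $i^{(L)}$ to its neighbours). Therefore the fill-in generated in equation $\mathrm{X}_j^{(L)}$ on the unknown $x_k^{(L)}$ is
\[
P2P^{(L)}_{jk}=-\,K_{ji}^{(L)}\,P\,K_{ik}^{(L)},\qquad P:=\bigl(D_i^{-1}\bigr)_{x_i^{(L)},\,\mathrm{X}_i^{(L)}}.
\]

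Next I would evaluate $P$ by block inversion of $D_i$. Writing $D_i^{-1}=\left[\begin{smallmatrix}P&Q\\ R&S\end{smallmatrix}\right]$ and solving the four block equations yields
\[
P=(K_{ii}^{(L)})^{-1}-(K_{ii}^{(L)})^{-1}U_i^{(L)}\bigl(V_i^{(L)*}(K_{ii}^{(L)})^{-1}U_i^{(L)}\bigr)^{-1}V_i^{(L)*}(K_{ii}^{(L)})^{-1},
\]
valid whenever $K_{ii}^{(L)}$ and the $r_i\times r_i$ matrix $V_i^{(L)*}(K_{ii}^{(L)})^{-1}U_i^{(L)}$ are invertible; the uniform distribution of the points guarantees this genericity. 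The decisive observation is that a direct substitution gives $V_i^{(L)*}P=0$ (equivalently $PU_i^{(L)}=0$), so every column of $P$ lies in $\ker V_i^{(L)*}$.

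Finally, since the P2M operator $V_i^{(L)*}$ has full row rank $r_i$, its kernel has dimension $N-r_i$, so $\operatorname{rank}P\le N-r_i$. As matrix multiplication cannot increase rank, $\operatorname{rank}P2P^{(L)}_{jk}\le\operatorname{rank}P\le N-r_i<N$, which is precisely the assertion that the P2P fill-in is rank deficient. I expect the main obstacle to be the bookkeeping that forces $(x_i^{(L)},z_i^{(L)})$ to be eliminated \emph{together} as the block pivot $D_i$: eliminating $x_i^{(L)}$ alone would produce the generically full-rank product $-K_{ji}^{(L)}(K_{ii}^{(L)})^{-1}K_{ik}^{(L)}$, and it is only the additional contribution of the local variable $z_i^{(L)}$, entering through the L2P/P2M operators $U_i^{(L)}$ and $V_i^{(L)*}$, that supplies the rank-$r_i$ correction and hence the null space certifying rank deficiency.
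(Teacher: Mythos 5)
Your proposal is correct, and it takes a genuinely different route from the paper's proof. Your bookkeeping checks out: in the interleaved ordering the equations $(\text{X}_i^{(L)},\text{Y}_i^{(L)})$ pair with the unknowns $(x_i^{(L)},z_i^{(L)})$ to give exactly the pivot $D_i$, the rows $\text{X}_j^{(L)}$ for $j^{(L)}\in\mathcal{N}(i^{(L)})$ couple to this block only through $\bigl[K_{ji}^{(L)}\ \ 0\bigr]$, and the block couples to $x_k^{(L)}$ only through $\bigl[K_{ik}^{(L)};\ 0\bigr]$, so the fill-in is indeed $-K_{ji}^{(L)}PK_{ik}^{(L)}$; moreover $V_i^{(L)*}P=0$ follows even more cheaply than via your explicit inversion formula, directly from the $(2,1)$ block of $D_iD_i^{-1}=I$, so you only need $D_i$ invertible rather than $K_{ii}^{(L)}$ and the $r_i\times r_i$ Schur complement separately (full row rank of $V_i^{(L)*}$ is automatic since the NNCA cross-approximation makes it contain an $r_i\times r_i$ identity on the pivot columns). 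The paper instead writes the fill-in as $-K_{ji}^{(L)}\bigl(K_{ii}^{(L)}\bigr)^{-1}K_{ik}^{(L)}$ and bounds its rank by $\min\{\operatorname{rank}K_{ji}^{(L)},\operatorname{rank}K_{ik}^{(L)}\}$, invoking the rank-growth results of~\cite{khan2022numerical} for interactions between boxes sharing a vertex or a hypersurface of dimension $d'$, which yields $\mathcal{O}\left(\log(N)\log^{d}(\log(N))\right)$ or $\mathcal{O}\left(N^{\frac{d'}{d}}\log^{d}(N)\right)$. The trade-off is this: your argument is exact, purely algebraic and kernel-independent, using the uniform-distribution hypothesis only for genericity of the pivots, and it even captures the $z_i^{(L)}$ correction that the paper's formula omits; but it certifies a null space of dimension only $r_i$, so the bound $N-r_i$ is nearly $N$ when $r_i\ll N$ and cannot support the theorem's actual purpose in the paper, namely that the fill-in can be \emph{efficiently} low-rank compressed, which is what underpins the linear-complexity claim. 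Note also that the paper's ranks are numerical ($\epsilon$-)ranks: your closing remark that $-K_{ji}^{(L)}\bigl(K_{ii}^{(L)}\bigr)^{-1}K_{ik}^{(L)}$ is generically full rank is true in exact arithmetic, yet it is precisely the numerical low-rankness of the neighbor blocks $K_{ji}^{(L)}$ and $K_{ik}^{(L)}$ that the paper exploits, not the correction supplied by the local variable. In short, you prove the theorem as literally stated; the paper proves the quantitatively stronger statement it needs.
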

\begin{proof}
Upon elimination of hypercube $i^{(L)}$ from the extended sparse system, the P2P fill-in $P2P^{(L)}_{jk}$ gets created as $P2P^{(L)}_{jk} := -K^{(L)}_{ji}K^{(L)}_{ii}K^{(L)}_{ik}$. The rank of $K_{ii}^{(L)}$ is $N$, as it is a self interaction. From~\cite{khan2022numerical}, the rank of interaction between particles of hypercubes that (i) share a vertex scales as
$\mathcal{O}\left(\log(N)\log^{d}(\log(N))\right)$; (ii) share a hypersurface of dim $d'$ scales as $\mathcal{O}\left(N^{\frac{d'}{d}}\log^{d}(N)\right)$, $d'\in\{1,2,\mathellipsis d-1\}$. 
\begin{itemize}
\item 
If at least one of the hypercubes $j^{(L)}, k^{(L)}$ shares a vertex with hypercube $i^{(L)}$ then 
       \begin{align}
    \text{rank}(P2P^{(L)}_{jk}) &\leq \min\{\text{rank}(K^{(L)}_{ji}), \text{rank}(K^{(L)}_{ii}), \text{rank}(K^{(L)}_{ik}\}\\
    &\leq \mathcal{O}\left(\log(N)\log^{d}(\log(N))\right).
    \end{align}  
\item
If $j^{(L)}$ and $k^{(L)}$ share a hypersurface of dim $d_{j}\in\{1,2,\mathellipsis d-1\}$ and $d_{k}\in\{1,2,\mathellipsis d-1\}$ with hypercube $i^{(L)}$ respectively, then
       \begin{align}
    \text{rank}(P2P^{(L)}_{jk}) &\leq \min\{\text{rank}(K^{(L)}_{ji}), \text{rank}(K^{(L)}_{ii}), \text{rank}(K^{(L)}_{ik}\}\\
    &\leq \mathcal{O}\left(N^{\frac{d'}{d}}\log^{d}(N)\right)
    \end{align}  
    where $d'=\min\{d_{j}, d_{k}\}$ and $d'\in\{1,2,\mathellipsis d-1\}$.
\end{itemize}
% So the upper bound for the rank of $K^{(L)}_{ji}$ is obtained by considering that $j^{(L)}$ and $i^{(L)}$ share a hypersurface of dim $d'=d-1$, which is $\mathcal{O}\left(N^{\frac{d-1}{d}}\log^{d}(N)\right)$. Similarly the upper bound for the rank of $K^{(L)}_{ik}$ is $\mathcal{O}\left(N^{\frac{d-1}{d}}\log^{d}(N)\right)$. And the rank of $K_{ii}^{(L)}$ is $N$, as it is a self interaction. Since $K^{(L)}_{jk} := -K^{(L)}_{ji}K^{(L)}_{ii}K^{(L)}_{ik}$, 
%    \begin{align}
% \text{rank}(K^{(L)}_{jk}) &\leq \min\{\text{rank}(K^{(L)}_{ji}), \text{rank}(K^{(L)}_{ii}), \text{rank}(K^{(L)}_{ik}\}\\
% &\leq \mathcal{O}(N^{\frac{d-1}{d}}\log^{d}(N))
% \end{align}  
\end{proof}
% The rank of a P2P fill-in corresponding to well-separated hypercubes in 2D, is almost constant, as illustrated numerically in article~\cite{ambikasaran2014inverse}.

% In Theorem~\ref{th:fillin}, under the assumption that the particles are uniformly distrubuted, we state the upper-bound for rank of a P2P fill-in corresponding to well-separated hypercubes at leaf level in dim $d$ to be $\mathcal{O}(N^{\frac{d-1}{d}}\log^{d}(N))$. In 2D the upper-bound is $\mathcal{O}(\sqrt{N}\log^{2}(N))$. This is a very conservative upper bound, as the numerical illustrations in article~\cite{ambikasaran2014inverse} show that the ranks are almost constant.

We show in Theorem~\ref{th:fillin}, under the assumption that the particles are uniformly distributed, that the P2P fill-ins corresponding to well-separated hypercubes at leaf level are rank deficient.
% and the upper-bound of its rank is
% It gives an upper-bound for rank of a P2P fill-in corresponding to well-separated hypercubes at leaf level in dim $d$ to be 
% $\mathcal{O}(N^{\frac{d-1}{d}}\log^{d}(N))$. In 2D the upper-bound is $\mathcal{O}(\sqrt{N}\log^{2}(N))$. 
We assume that this is true at higher levels as well and also when the particles are distributed non-uniformly.
Further, the bounds obtained in Theorem~\ref{th:fillin} are very conservative, as the numerical illustrations in~\cite{ambikasaran2014inverse} show that the ranks are almost constant.

The ranks of P2L and M2P fill-ins do not scale with $N$, as they are equal to the number of locals and the number of multiples respectively.

In conclusion, a fill-in corresponding to an interaction between well-separated hypercubes is low-rank and therefore can be efficiently approximated by a low-rank  matrix. Further, the compression is redirected through existing operators as described later in the section. 
%%%%%%%%%%%%%%%%%%%%%%%%%%%%%%%%%%%%%%%%%%%%%%%%%%%%%%%%%%%%%%%%%

In the process of elimination, due to the creation of fill-ins,
and due to the compression and redirection of fill-ins corresponding to well-separated hypercubes, 
Equation~\eqref{eq:EquationX} gets modified as
% The equations governing the elimination phase are Equations~\eqref{eq:EquationY} and~\eqref{eq:EquationX}, which in the process of elimination get modified, and the form of these equations before a cell $i^{(l)}$ gets eliminated are given below:
\begin{alignat}{3}
     % y_{i}^{(l)} &= V_{i}^{{(l)}^{*}}x_{i}^{(l)} \label{eq:EquationY2}\\
     b_{i}^{(l)} = U_{i}^{(l)}z_{i}^{(l)} + \sum_{j^{(l)}\in\mathcal{N}(i^{(l)})}\left((1-E_{j}^{(l)})K_{ij}^{(l)} x_{j}^{(l)} + E_{j}^{(l)}M2P_{ij}^{(l)} y_{j}^{(l)}\right) \label{eq:EquationX2}
\end{alignat} 
where $E_{j}^{(l)}$ takes values $0$ or $1$. It being $1$, indicates that node $j^{(l)}$ is eliminated and $0$, indicates that node $j^{(l)}$ is not eliminated. 
% For completeness, we restate Equation~\eqref{eq:EquationY} here.
We continue the process of elimination until when the multipoles at level $2$ are the only variables left. This entire elimination process is described in Algorithm~\ref{alg:elimination2}.

 \begin{algorithm}[!htbp]
	\caption{Elimination algorithm}\label{alg:elimination2}
	\begin{algorithmic}[1]
		\Procedure{Elimination}{$n_{\max}$,$\epsilon_{A}$}
		\State{} \Comment{$n_{\max}$ is the maximum number of particles at leaf level}
% 		\Comment{$\epsilon$ is the tolerance for ACA;}
		\State {Form $\mathcal{T}^L$, where $L= \min \left\{l : \abs{x^{(l)}_{i}} < n_{\max};\forall \text{ hypercubes $i$ at level $l$}\right\}$}
		% \State {Form $\mathcal{T}^L$, where $L= \min \left\{l : \abs{x^{(l)}_{i}} < n_{\max};\forall i \in \{0,1,2,\ldots,2^{dl}-1\}\right\}$}
		\State {Perform NNCA with tolerance $\epsilon_{A}$ to find the $L2L/L2P, M2L, M2M/P2M$ operators of all hypercubes at all levels}
		% \State{Declare sets $vP2P$ and $vP2L$ that holds integer ordered pairs}
            \For{\texttt{$l=L:2$}} 
				\For{\texttt{$i=e_{1}^{(l)},e_{2}^{(l)},\mathellipsis,e_{2^{dl}}^{(l)}$}} 
				% \Comment{}
				    % \For{$(r^{(l)},s^{(l)})$ in $vP2P$}
				    %     \If{$(i==r^{(l)}||i==s^{(l)})$}
				    %         \State Compress $P2P_{rs}^{(l)}$ and $P2P_{sr}^{(l)}$ and update the relevant operators as in Subsection~\ref{ssec:P2P_fill_in}
				    %     \EndIf
				    % \EndFor
				    % \For{$(r^{(l)},s^{(l)})$ in $vP2L$}
				    %     \If{$(i==s^{(l)})$}
				    %     \State Compress $P2L_{rs}^{(l)}$ and $M2P_{sr}^{(l)}$ and update the relevant operators as in Subsections~\ref{ssec:P2L_fill_in} and~\ref{ssec:M2P_fill_in}.
				    %     \EndIf
				    % \EndFor
				    \State{Eliminate $x_{i}^{(l)}$ and $z_{i}^{(l)}$ from the extended sparse system using Equations~\eqref{eq:EquationX2} and~\eqref{eq:EquationY} }
				    % ; Also perform the corresponding operations on the RHS $b$
				    \State{$E_{i}^{(l)}:=1$}
					\For{$(p^{(l)},q^{(l)})\text{ in }\{(r^{(l)},s^{(l)}): r^{(l)}\in \mathcal{N}(i^{(l)}),s\in\mathcal{N}(i^{(l)})\}$}
						\If{$p^{(l)}$ is eliminated}
    						\If{$q^{(l)}$ is eliminated}
						        \State Results in the update of $M2L_{pq}^{(l)}$ and $M2L_{qp}^{(l)}$
					        \Else
					            \State Results in fill-ins $P2L_{pq}^{(l)}$ and $M2P_{qp}^{(l)}$
					            \If{$p^{(l)}$ and $q^{(l)}$ are well-separated}
					                % \State $vP2L$.push\_back($(p^{(l)},q^{(l)})$)
                                        \State Compress $P2L_{pq}^{(l)}$ and $M2P_{qp}^{(l)}$ and update the relevant operators as in Subsections~\ref{ssec:P2L_fill_in} and~\ref{ssec:M2P_fill_in}.
					               % \State $vM2P$.push\_back($(q,p)$)
					            \EndIf
				            \EndIf
						\Else
                            \If{$q^{(l)}$ is eliminated}
						        \State Results in fill-ins $P2L_{qp}^{(l)}$ and $M2P_{pq}^{(l)}$
                                        \If{$p^{(l)}$ and $q^{(l)}$ are well-separated}
    					                % \State $vP2L$.push\_back($(p^{(l)},q^{(l)})$)
                                            \State Compress $P2L_{qp}^{(l)}$ and $M2P_{pq}^{(l)}$ and update the relevant operators as in Subsections~\ref{ssec:P2L_fill_in} and~\ref{ssec:M2P_fill_in}.
    					               % \State $vM2P$.push\_back($(q,p)$)
					                \EndIf
					        \Else
					            \State Results in the update of $P2P_{pq}^{(l)}$ and $P2P_{qp}^{(l)}$
					            \If{$p^{(l)}$ and $q^{(l)}$ are well-separated}
					                % \State $vP2P$.push\_back($(p^{(l)},q^{(l)})$)
                                        \State Compress $P2P_{pq}^{(l)}$ and $P2P_{qp}^{(l)}$ and update the relevant operators as in Subsection~\ref{ssec:P2P_fill_in}
					               % \State $vP2P$.push\_back($(q,p)$)
					            \EndIf
				            \EndIf
			            \EndIf
					\EndFor
				\EndFor
			\EndFor
			% \State{Solve for the multipoles at level $2$, $y^{(2)}$, directly}
		\EndProcedure
	\end{algorithmic}
\end{algorithm}

% We now describe the fill-in compression and redirection. 
% We omit the superscript that represents the level at some places, to improve the readability of notations, wherein the level can be understood from the context.

\subsubsection{Compression and redirection of P2P fill-in}\label{ssec:P2P_fill_in}
Consider a P2P fill-in $P2P_{ij}$ where
% $i^{(l)}\in\mathcal{IL}(j^{(l)})$. 
% We term $F_{i_{k}j_{k}}$ as the fill-in-interaction between particles in $i_{k}$ and $j_{k}$. 
hypercubes $i$ and $j$ at level $l$ are well-separated.
$P2P_{ij}$ can be efficiently approximated by a low-rank matrix and this interaction can be redirected through an already existing interaction 
% between particles of hypercubes $i$ and $j$ at level $l$ 
via the path $x_{j}\rightarrow z_{j}\rightarrow y_{j} \rightarrow y_{i}\rightarrow z_{i} \rightarrow x_{i}$ as shown in Figure~\ref{fig:P2P_compression}. 
% This redirection results in an update of the $V^{*}_{j_{k}}$, $K_{i_{k}j_{k}}$, $U_{i_{k}}$ operators. 
% This redirection results in an update of all the operators connected to the multipoles of $j^{(l)}$ and locals of $i^{(l)}$ in the graph of the extended sparse system. 
% The operators that get updated are i) $V_{j}^{*}$ ii) $P2P_{ij}$ iii) $U_{i}$ iv) $U_{i}^{\dagger}$ 
This redirection results in an update of (i) P2M $V_{j}^{*}$; (ii) M2L $A_{ij}$; (iii) L2P $U_{i}$; (iv) M2M $V_{j}^{\dagger *}$; 
% where $i^{(l)}\in \mathcal{C}(i'^{(l-1)})$ 
(v) L2L $U_{i}^{\dagger}$; 
% where $j^{(l)}\in \mathcal{C}(j'^{(l-1)}$ 
(vi) Other M2Ls $\{A_{ic}: c\in \{\mathcal{IL}(i)\backslash j\}\bigcup \mathcal{N}(i)\}$ and $\{A_{dj}: d\in \{\mathcal{IL}(j)\}\bigcup \mathcal{N}(j)\}$. We now describe how each of these updates is done.

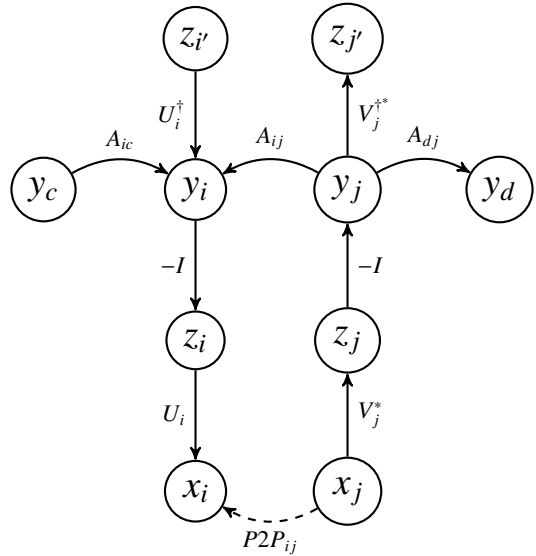
\begin{figure}[!htp]
  \begin{center}
\begin{tikzpicture}[->,>=stealth',auto,node distance=2cm,
  thick,main node/.style={circle,draw,font=\sffamily\Large\bfseries}]

  \node[main node] (1) {$y_{c}$};
  
  \node[main node] (2) [right of=1] {$y_{i}$};
  \node[main node] (3) [right of=2] {$y_{j}$};

  \node[main node] (2a) [below of=2] {$z_{i}$};
  \node[main node] (3a) [below of=3] {$z_{j}$};

  \node[main node] (4) [right of=3] {$y_{d}$};
  \node[main node] (5) [below of=2a] {$x_{i}$};
  \node[main node] (6) [below of=3a] {$x_{j}$};
  \node[main node] (7) [above of=2] {$z_{i'}$};
  \node[main node] (8) [above of=3] {$z_{j'}$};
  
  \path[every node/.style={font=\sffamily\small}]
    (1) edge[draw=black, bend left] node [color=black, pos=0.5, above] {$A_{ic}$} (2)
    (3) edge[draw=black, bend right] node [color=black, pos=0.5, above] {$A_{ij}$} (2)
    (3) edge[draw=black, bend left] node [color=black, pos=0.5, above] {$A_{dj}$} (4)
    (2a) edge[draw=black,<-] node [color=black, pos=0.5,left] {$-I$} (2)
    (3a) edge[draw=black,->] node [color=black, pos=0.5,right] {$-I$} (3)
    (2a) edge[draw=black] node [color=black, pos=0.5,left] {$U_{i}$} (5)
    (6) edge[draw=black] node [color=black, pos=0.5,right] {$V_{j}^{*}$} (3a)
    (7) edge[draw=black] node [color=black, pos=0.5,left] {$U_{i}^{\dagger}$} (2)
    (3) edge[draw=black] node [color=black, pos=0.5,right] {$V_{j}^{\dagger^{*}}$} (8)
    (6) edge[draw=black, bend left, dashed] node [color=black, pos=0.5, below] {$P2P_{ij}$}  (5);
\end{tikzpicture}
  \end{center}
\caption{Illustration of $P2P_{ij}$ compression and its redirection. The fill-in is shown through a dashed edge.
% The operators that get updated are shown in red and the operators that get nullified as a result of redirection are shown in blue. $d=i \mod 4$, $c=j \mod 4$,
Here $i'=\mathcal{P}(i)$ and $j'=\mathcal{P}(j)$. (It is to be noted that a part of the graph, with only a few connections that get updated as a result of the fill-in redirection, is shown here.)}
\label{fig:P2P_compression}
\end{figure}

% We update the relevant operators as below:
% \subsubsubsection{P2M, M2L, L2P update}
% \noindent
\textbf{P2M, M2L, L2P update.}
% Directing the fill-in through $V^{T}_{j_{k}}\rightarrow$ $K_{i_{k}j_{k}}\rightarrow$ $U_{i_{k}}$ results in an update of the  
First, we find the new orthogonal column basis, that spans the existing column basis of $i$ at level $l$, i.e., $U_{i}$, and the columns of the fill-in $P2P_{ij}$ by finding the RRQR decomposition of the augmented matrix
\begin{equationwithLineno}
    [U_{i} | P2P_{ij}] = \tilde{U}_{i} L_{ij}.
\end{equationwithLineno}
% We use the rank-revealing QR (RRQR) decomposition to find the $U_{i}$ and $L_{ij}$ matrices. 
A threshold $\epsilon_{A}$ is used as an input to the RRQR routine, such that the relative residual in the decomposition is equal to $\mathcal{O}(\epsilon_{A})$. Considering the matrix $L_{ij}$ to be an augmented matrix of the form $L_{ij} \equiv [L_{i} | \tilde{L}_{ij}]$, results in 
\begin{subequations}
   \begin{align}
    U_{i} &= \tilde{U}_{i} L_{i} \label{eq:P2P_2a}\\
    P2P_{ij} &= \tilde{U}_{i} \tilde{L}_{ij},\label{eq:P2P_2}
\end{align}  
\end{subequations}
% which can be re-written as
% \begin{subequations}
%    \begin{align}
%     R^{c}_{i_{k}} &= {\tilde{U}_{i_{k}}}^{*}U_{i_{k}} \\
%     \tilde{R}^{c}_{i_{k}j_{k}} &= {\tilde{U}_{i_{k}}}^{*}F_{i_{k}j_{k}}.
% \end{align}  
% \end{subequations}
Next, we find the orthogonal row basis, that spans the existing row basis of $j$ at levek $l$, i.e., $V_{j}^{*}$, and a row basis of the fill-in $P2P_{ij}$, $\tilde{L}_{ij}$, by finding the RRQR decomposition of the augmented matrix
\begin{equationwithLineno}
    [V_{j} |\tilde{L}_{ij}^{*}] = \tilde{V}_{j} R_{ij}
    % \equiv V_{j_{k}}' [R_{j} | R_{j}']
\end{equationwithLineno}
By expressing $R_{ij}$ as the augmented matrix $[R_{j} | \tilde{R}_{ij}]$, we have
\begin{subequations}
   \begin{align}
    V_{j} &= \tilde{V}_{j} R_{j}, \label{eq:P2P_4a}\\
    \tilde{L}^{*}_{ij} &= \tilde{V}_{j} \tilde{R}_{ij},\label{eq:P2P_4b}
\end{align}  
\end{subequations}
% which is equivalent to
% \begin{subequations}
%    \begin{align}\label{eq:P2P_3}
%      R_{j_{k}}^{r} &= {\tilde{V}_{j_{k}}}^{*} V_{j_{k}}, \\
%      \tilde{R}_{i_{k}j_{k}}^{r} &= {\tilde{V}_{j_{k}}}^{*}\tilde{R}^{c*}_{i_{k}j_{k}}.\label{eq:P2P_3b}
% \end{align}  
% \end{subequations}
Using Equations~\eqref{eq:P2P_2} and~\eqref{eq:P2P_4b}, we have,
%    \begin{align}
    % F_{i_{k}j_{k}}&=U_{i_{k}}' R_{i_{k}}'\\
    % &=U_{i_{k}}'{R_{j_{k}}'}^{*}{V_{j_{k}}'}^{*}
% \end{align}  
\begin{equationwithLineno}\label{eq:P2P_fill_in}
    P2P_{ij}=\tilde{U}_{i}\tilde{L}_{ij}
    =\tilde{U}_{i}\tilde{R}_{ij}^{*}\tilde{V}_{j}^{*}.
\end{equationwithLineno}
%    \begin{align}
%     K_{i_{k}j_{k}}&=U_{i_{k}} A_{i_{k}j_{k}}V_{j_{k}}^{*}\\
%     &=U_{i_{k}}'R_{i_{k}} A_{i_{k}j_{k}}R_{j_{k}}^{*} V_{j_{k}}'^{*}
% \end{align}  
% Further by expressing P2P in terms of its new and old $L2P\times M2L\times P2M$ using equations~\ref{eq:P2P_2a} and~\ref{eq:P2P_4a}, we have,
% By equating the potential at $x_{i_{k}}$ before and after redirecting the fill-in, we have
Using Equations~\eqref{eq:P2P_fill_in}, ~\eqref{eq:P2P_2a}, and~\eqref{eq:P2P_4a}, $U_{i} A_{ij}V_{j}^{*} + P2P_{ij}$ can be expressed as
   \begin{align}
    U_{i} A_{ij}V_{j}^{*} + P2P_{ij} &= \tilde{U}_{i} L_{i} A_{ij} R_{j}^{*}\tilde{V}_{j}^{*}  + \tilde{U}_{i}\tilde{R}_{ij}^{*}\tilde{V}_{j}^{*}\\
    &=\tilde{U}_{i}(L_{i} A_{ij}R_{j}^{*}+\tilde{R}_{ij}^{*}) \tilde{V}_{j}^{*}.
\end{align}  

We then make the following assignments, which update the old operators with the new ones.
   \begin{align}
    A_{ij}&:= L_{i} A_{ij}R_{j}^{*}+\tilde{R}_{ij}^{*}\\
    U_{i} &:= \tilde{U}_{i}\\
    V_{j} &:= \tilde{V}_{j}
\end{align}  

% \subsubsubsection{Other M2L updates}\label{sec:otherM2L}
% \noindent
\textbf{Other M2L updates.}
For $c\in \{\mathcal{IL}(i)\backslash j\}\bigcup \mathcal{N}(i)$, the value of the old potential due to $y_{c}$ at particles $x_{i}$ should be equal to the value of the new potential due to $y_{c}$ at particles $x_{i}$, as in equation~\ref{eq:otherM2LS_P2PFillin}, because the potential due to $y_{c}$ at $x_{i}$ is not dependent on the fill-in between hypercubes $i$ and $j$ at level $l$.
\begin{equationwithLineno}\label{eq:otherM2LS_P2PFillin}
U_{i}A_{ic}y_{c} = \tilde{U}_{i}\tilde{A}_{ic}y_{c}
\end{equationwithLineno}
As Equation~\eqref{eq:otherM2LS_P2PFillin} holds true $\forall y_{c}\in\mathbb{C}^{k\times 1}$, it can be equivalently written as
\begin{equationwithLineno}\label{eq:otherM2LS_P2PFillin_2}
U_{i}A_{ic} = \tilde{U}_{i}\tilde{A}_{ic}
\end{equationwithLineno}
Further, since $\tilde{U}_{i}^{*}\tilde{U}_{i} = I$, Equation~\eqref{eq:otherM2LS_P2PFillin_2}, can be written as
% Equivalently, Equation~\eqref{eq:otherM2LS_P2PFillin} can be written as
\begin{equationwithLineno}
\tilde{A}_{ic} = \tilde{U}_{i}^{*}U_{i}A_{ic}=L_{i}A_{ic}.
\end{equationwithLineno}
We then make the following assignment, which updates the old operator with the new one.
\begin{equationwithLineno}
A_{ic} := \tilde{A}_{ic}
\end{equationwithLineno}
Similarly, for $d\in \{\mathcal{IL}(j)\}\bigcup \mathcal{N}(j)\}$, the value of the old locals due to $x_{j}$ should be equal to the value of the new locals due to $x_{j}$, because the locals of $d$ due to particles of $j$ is not dependent on the fill-in between $i$ and $j$.
\begin{equationwithLineno}\label{eq:otherM2LS_P2PFillin2}
A_{dj}V_{j}^{*}x_{j} = \tilde{A}_{dj}\tilde{V}_{j}^{*}x_{j}
\end{equationwithLineno}
As Equation~\eqref{eq:otherM2LS_P2PFillin2} holds true $\forall x_{j}\in\mathbb{C}^{k\times 1}$, it can be equivalently written as
% Equivalently, Equation~\eqref{eq:otherM2LS_P2PFillin2} can be written as
\begin{equationwithLineno}\label{eq:otherM2LS_P2PFillin2_2}
A_{dj}V_{j}^{*} = \tilde{A}_{dj}\tilde{V}_{j}^{*}
\end{equationwithLineno}
Further, since $\tilde{V}_{j}^{*}\tilde{V}_{j} = I$, Equation~\eqref{eq:otherM2LS_P2PFillin2_2}, can be written as
\begin{equationwithLineno}
\tilde{A}_{dj} = A_{dj}V_{j}^{*}\tilde{V}_{j} = A_{dj}R_{j}^{*}.
\end{equationwithLineno}
We then make the following assignment, which updates the old operator with the new one.
\begin{equationwithLineno}
A_{dj} := \tilde{A}_{dj}
\end{equationwithLineno}

% \subsubsubsection{Parent P2M update}\label{sec:parentP2M}
% \noindent
\textbf{M2M update.}
%    \begin{align*}
% \tilde{U}_{i_{k}}\tilde{U}_{i'_{k-1}} &= U_{i_{k}}U_{i'_{k-1}}\\
% U_{i'_{k-1}}' &= {\tilde{U}_{i_{k}}}^{*}U_{i_{k}}U_{i'_{k-1}}\\
% U_{i'_{k-1}} &:= \tilde{U}_{i'_{k-1}}
% \end{align*}  
% Let $m= 2^d$, $b=j \mod m$, $j'=\ceil{j/m}$ and $s=\sum_{i=0}^{c-1}|y_{j'+i}|$. At some places in this discussion, we have omitted the subscript that indicates the level of the tree, for a clear presentation. The level can be understood from the context. Since the particles at a parent level are defined by 
% the P2M operator can be split across the columns, of sizes equal to the cardinality of the multipoles of its children, i.e.,
% where
% $V_{j',r}^{*} = V^{*}_{j'}[:,s+1:s+1+|y_{mj'+r}|], \text{ } s=\sum_{e=0}^{r-1}|y_{mj'+e}| \text{ } \forall\text{ } r=\{0,1,\mathellipsis,m-1\}$. Here $|.|$ denotes the cardinality. 
% It follows that\\
% \[y_{j} = V^{*}_{j}x_{j} \text{ and } y_{mj'+b} = V^{*}_{j',b}y_{j}.\] It further follows that
% \[y_{mj'+b} = V^{*}_{j',b}V^{*}_{j}x_{j}.\]
% The fill-in $P2P_{ij}$ has no influence on the multipoles $y_{j'}$. So, the old and the new  values of the multipoles must be equal and hence it follows that
% \[{V^{*}_{j',b}} V_{j}^{*} = \tilde{V}^{*}_{j',b} \tilde{V}_{j}^{*}\]
% Equivalently, the above equation can be written as
% \[\tilde{V}^{*}_{j',b}  = V^{*}_{j',b} V_{j}^{*} \tilde{V}_{j}\]
% We then make the following assignment, which updates the old operator with that of the new.
% \[{V^{*}_{j'}} := [V^{*}_{j',0} \; \;\;  \mathellipsis V^{*}_{j',b-1} \; \; \;  {\tilde{V}^{*}_{j',b}} \; \;\;  \; V^{*}_{j',b+1} \mathellipsis V^{*}_{j',m-1}]\]
The old and new multipoles of hypercube $j$ at level $l$ are given by \\
   \begin{align}
y_{j} &= V^{*}_{j}x_{j}, \label{eq:oldV}\\
\tilde{y}_{j} &= \tilde{V}^{*}_{j}x_{j} \label{eq:newV}
\end{align}  
respectively.
% \[y_{mj'+b} = V^{*}_{j+}y_{j}.\] 
% It further follows that
% \[y_{mj'+b} = V^{*}_{j+}V^{*}_{j}x_{j}.\]
The fill-in $P2P_{ij}$ has no influence on the multipoles $y_{j'}$. So, the old and the new contribution of the multipoles of $j$ at the multipoles of its parent $j'$ must be equal and hence it follows that
\begin{equationwithLineno}\label{eq:P2M}
{V^{\dagger^{*}}_{j}} y_{j} = \tilde{V}^{\dagger^{*}}_{j} \tilde{y}_{j}
\end{equationwithLineno}
Using Equations~\eqref{eq:oldV}, ~\eqref{eq:newV}, and,~\eqref{eq:P2M}
\begin{equationwithLineno}\label{eq:M2Mupdate}
{V^{\dagger^{*}}_{j}} V^{*}_{j}x_{j} = \tilde{V}^{\dagger^{*}}_{j} \tilde{V}^{*}_{j}x_{j}
\end{equationwithLineno}
As Equation~\eqref{eq:M2Mupdate} holds true $\forall x_{j}\in\mathbb{C}^{k\times 1}$, it can be equivalently written as
\begin{equationwithLineno}\label{eq:M2Mupdate2}
{V^{\dagger^{*}}_{j}} V_{j}^{*} = \tilde{V}^{\dagger^{*}}_{j} \tilde{V}_{j}^{*}
\end{equationwithLineno}
Further, since $\tilde{V}_{j}^{*}\tilde{V}_{j} = I$, Equation~\eqref{eq:M2Mupdate2} can be written as
% Equivalently, the above equation can be written as
\begin{equationwithLineno}
\tilde{V}^{\dagger^{*}}_{j}  = V^{\dagger^{*}}_{j} V_{j}^{*} \tilde{V}_{j}  = V^{\dagger^{*}}_{j} R_{j}^{*}.
\end{equationwithLineno}
We then make the following assignment, which updates the old operator with the new one.
\begin{equationwithLineno}
{V^{\dagger^{*}}_{j}} := {\tilde{V}^{\dagger^{*}}_{j}} 
\end{equationwithLineno}

%    \begin{align*}
% \tilde{V}_{j'_{k-1}}\tilde{V}_{j_{k}} &= V_{j'_{k-1}}V_{j_{k}}\\
% \tilde{V}_{j'_{k-1}} &= V_{j'_{k-1}}V_{j_{k}}{\tilde{V}_{j_{k}}}^{*}\\
% V_{j'_{k-1}} &:= \tilde{V}_{j'_{k-1}}
% \end{align*}  
% \subsubsubsection{Parent L2P update}\label{sec:parentL2P}
% \noindent
\textbf{L2L update.}
A similar analysis as done in updating the M2M on the L2L operator results in its update as follows:
% \[{U_{i'}} := [U^{*}_{i',0} \; \;\; \mathellipsis {U^{*}_{i',a-1}} \; \;\; {\tilde{U}^{*}_{i',a}} \; \;\; {U^{*}_{i,a+1}} \mathellipsis \; \;\;U^{*}_{i',m-1}]^{*}\]
% where $d=i\mod m$, $i'=\ceil{i/m}$, and
\begin{equationwithLineno}
 U_{i}^{\dagger} := \tilde{U}_{i}^{\dagger} = {\tilde{U}_{i}}^{*}U_{i}U_{i}^{\dagger}= L_{i}U_{i}^{\dagger}.
\end{equationwithLineno}

\subsubsection{Compression and redirection of P2L fill-in}\label{ssec:P2L_fill_in}
% \noindent
% \textbf{Compression of P2L fill-in.}
Consider a P2L fill-in $P2L_{ij}$ where 
% where $i^{(l)}\in\mathcal{IL}(j^{(l)})$. 
% We term $P2L_{ij}$ as the fill-in-interaction between particles in $j^{(l)}$ and locals of $i^{(l)}$. 
hypercubes $i$ and $j$ at level $l$ are well-separated.
Then $P2L_{ij}$ can be efficiently approximated by a low-rank matrix and this interaction can be redirected through an already existing interaction 
% between particles of hypercube $j$ and locals of hypercube $i$ at level $l$
via the path $x_{j}\rightarrow z_{j}\rightarrow y_{j} \rightarrow y_{i}$, as shown in the Figure~\ref{fig:P2L_compression}. 
% This redirection results in an update of the $V^{*}_{j_{k}}$, $K_{i_{k}j_{k}}$, $U_{i_{k}}$ operators. 
This redirection results in an update of (i) P2M $V^{*}_{j}$; (ii) M2L $A_{ij}$; (iii) M2M $V_{j}^{\dagger}$; 
% where $j\in \mathcal{C}(j')$ 
(iv) Other M2Ls $\{A_{dj}: d\in \{\mathcal{IL}(j)\backslash i\}\bigcup \mathcal{N}(j)\}$. We now describe how each of these updates is done.

% In this subsection, we describe the redirection of a P2L fill-in, $H_{i_{k}j_{k}}$, where $i_{k}\in \mathcal{IL}(j_{k})$, through the path $x_{j_{k}}\rightarrow y_{j_{k}} \rightarrow z_{i_{k}}$, as shown in the Figure~\ref{fig:P2L_compression}.
\begin{figure}[H]
  \begin{center}
\begin{tikzpicture}[->,>=stealth',auto,node distance=2cm,
  thick,main node/.style={circle,draw,font=\sffamily\Large\bfseries}]

%   \node[main node] (1) {$y_{c_{k}}$};
  \node[main node] (2) [right of=1] {$y_{i}$};
  \node[main node] (3) [right of=2] {$y_{j}$};
  \node[main node] (4) [right of=3] {$y_{d}$};
  \node[main node] (3a) [below of=3] {$z_{j}$};
  \node[main node] (6) [below of=3a] {$x_{j}$};
%   \node[main node] (7) [above of=2] {$z_{i'_{k-1}}$};
  \node[main node] (8) [above of=3] {$z_{j'}$};
  
  \path[every node/.style={font=\sffamily\small}]
    % (1) edge[draw=red, bend left] node [color=red, pos=0.5, above] {$A_{i_{k}c_{k}}$} (2)
    (3) edge[draw=black, bend right] node [color=black, pos=0.5, above] {$A_{ij}$} (2)
    (3) edge[draw=black, bend left] node [color=black, pos=0.5, above] {$A_{dj}$} (4)
    (3) edge[draw=black,<-] node [color=black, pos=0.5,right] {$-I$} (3a)
    (6) edge[draw=black, dashed] node [color=black, pos=0.5,left] {$P2L_{ij}$} (2)
    (6) edge[draw=black] node [color=black, pos=0.5,right] {$V_{j}^{*}$} (3a)
    % (7) edge[draw=red] node [color=red, pos=0.5,left] {$U_{i'_{k-1}}^{(d)}$} (2)
    (3) edge[draw=black] node [color=black, pos=0.5,right] {$V_{j}^{\dagger^{*}}$} (8);
    % (6) edge[draw=blue, bend left] node [color=blue, pos=0.5, below] {$F_{i_{k}j_{k}}$}  (5);
\end{tikzpicture}
  \end{center}
\caption{Illustration of $P2L_{ij}$ compression and its redirection. The fill-in is shown through a dashed edge.
% The operators that get updated are shown in red and the operators that get nullified as a result of redirection are shown in blue. 
% Here $c=j \mod 4$ and $j'=\ceil{j/4}$.
Here $j'=\mathcal{P}(j)$. It is to be noted that a part of the graph, with only a few connections that get updated as a result of the fill-in redirection, is shown here.}
\label{fig:P2L_compression}
\end{figure}
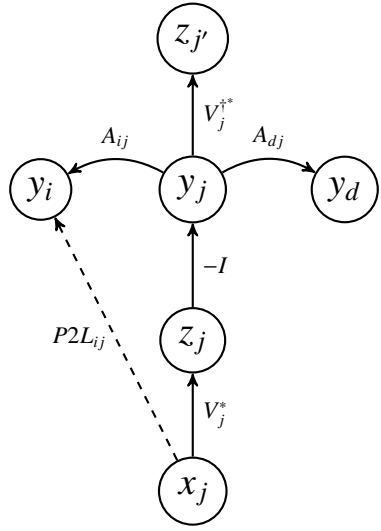

% \subsubsubsection{P2M, M2L update}
% \noindent
\textbf{P2M, M2L update.}
We find the new orthogonal row basis, that spans the existing row basis of $j$ at level $l$, i.e., $V_{j}^{*}$, and the rows of the fill-in $P2L_{ij}$, by finding the RRQR decomposition of the augmented matrix
\begin{equationwithLineno}
    [V_{j} |P2L_{ij}^*] = \tilde{V}_{j} R_{ij}
    % \equiv V_{j_{k}}' [R_{j} | R_{j}']
\end{equationwithLineno}
By expressing $R_{ij}$ as the augmented matrix $[R_{j} | \tilde{R}_{ij}]$, we have
\begin{subequations}
   \begin{align}
    V_{j} &= \tilde{V}_{j} R_{j}, \label{eq:P2L_4a}\\
    P2L_{ij}^* &= \tilde{V}_{j} \tilde{R}_{ij} ,\label{eq:P2L_4b}
\end{align}  
\end{subequations}
Using Equations~\eqref{eq:P2L_4a} and, ~\eqref{eq:P2L_4b}, $A_{ij}V_{j}^{*}+P2L_{ij}$ can be expressed as 
   \begin{align}
    A_{ij}V_{j}^{*}+P2L_{ij}
    &=A_{ij}R_{j}^{*}\tilde{V}_{j}^* +  \tilde{R}_{ij}^{*} \tilde{V}_{j}^*\\
    &=(A_{ij}R_{j}^{*} + \tilde{R}_{ij}^{*} )\tilde{V}_{j}^*.
\end{align}  
We then make the following assignments, which update the old operators with the new ones.
   \begin{align}
    A_{ij}&:= A_{ij}R_{j}^{*} + \tilde{R}_{ij}^{*}\\
    V_{j} &:= \tilde{V}_{j}
\end{align}  

% \subsubsubsection{Other M2L updates}
% \noindent
\textbf{Other M2L updates.}
As a result of the redirection of the fill-in $P2L_{ij}$, the M2Ls $A_{dj}$ where $d\in \{\mathcal{IL}(j)\backslash i\}\bigcup \mathcal{N}(j)\}$ get updated. The updates follow the same lines described in Subsubsection~\ref{ssec:P2P_fill_in}.

% \subsubsubsection{Parent P2M update}
% \noindent
\textbf{M2M update.}
As a result of the redirection of the fill-in $P2L_{ij}$, M2M $V_{j}^{\dagger*}$ gets updated. The updates follow the same lines described in Subsubsection~\ref{ssec:P2P_fill_in}.

\subsubsection{Compression and redirection of M2P fill-in}\label{ssec:M2P_fill_in}
Consider a M2P fill-in $M2P_{ij}$ where 
% $i^{(l)}\in\mathcal{IL}(j^{(l)})$. 
% We term $M2P_{ij}$ as the fill-in-interaction between the multipoles of $j^{(l)}$ and particles in $i^{(l)}$. 
hypercubes $i$ and $j$ at level $l$ are well-separated.
Then $M2P_{ij}$ can be efficiently approximated by a low-rank matrix and this interaction can be redirected through an already existing interaction 
% between the multipoles of hypercube $j$ and particles of hypercube $i$ at level $l$ 
via the path $y_{j}\rightarrow y_{i}\rightarrow z_{i} \rightarrow x_{i}$, as shown in the Figure~\ref{fig:M2P_compression}. 
% This redirection results in an update of the $U^{*}_{j_{k}}$, $K_{i_{k}j_{k}}$, $U_{i_{k}}$ operators. 
This redirection results in an update of 
% all the operators connected to the locals of $i^{(l)}$ in the graph of the extended sparse system. The operators that get updated are 
(i) L2P $U_{i}$; (ii) M2L $A_{ij}$; (iii) L2L $U_{i}^{\dagger}$; 
% where $i\in \mathcal{C}(i')$ 
(iv) Other M2Ls $\{A_{ic}: c\in \{\mathcal{IL}(i)\backslash j\}\bigcup \mathcal{N}(i)\}$. We now describe how each of these updates is done.
% In this subsection, we describe the redirection of a M2P fill-in, $G_{i_{k}j_{k}}$, where $i_{k}\in \mathcal{IL}(j_{k})$, through the path $y_{j_{k}}\rightarrow z_{i_{k}} \rightarrow x_{i_{k}}$, as shown in the Figure~\ref{fig:M2P_compression}.

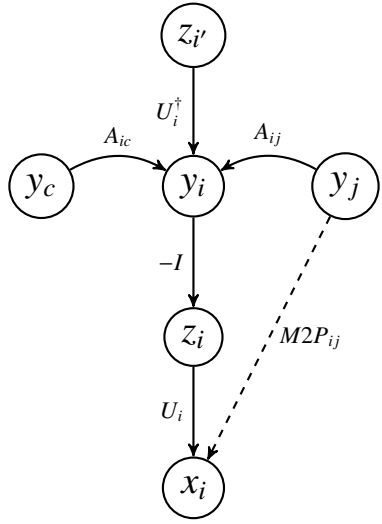
\begin{figure}[H]
  \begin{center}
\begin{tikzpicture}[->,>=stealth',auto,node distance=2cm,
  thick,main node/.style={circle,draw,font=\sffamily\Large\bfseries}]

  \node[main node] (1) {$y_{c}$};
  \node[main node] (2) [right of=1] {$y_{i}$};
  \node[main node] (2a) [below of=2] {$z_{i}$};
  \node[main node] (3) [right of=2] {$y_{j}$};
%   \node[main node] (4) [right of=3] {$z_{d_{k}}$};
  \node[main node] (5) [below of=2a] {$x_{i}$};
%   \node[main node] (6) [below of=3] {$x_{j_{k}}$};
  \node[main node] (7) [above of=2] {$z_{i'}$};
%   \node[main node] (8) [above of=3] {$y_{j'_{k-1}}$};
  
  \path[every node/.style={font=\sffamily\small}]
    (1) edge[draw=black, bend left] node [color=black, pos=0.5, above] {$A_{ic}$} (2)
    (3) edge[draw=black, bend right] node [color=black, pos=0.5, above] {$A_{ij}$} (2)
    (2) edge[draw=black,->] node [color=black, pos=0.5, left] {$-I$} (2a)
    (2a) edge[draw=black] node [color=black, pos=0.5,left] {$U_{i}$} (5)
    % (6) edge[draw=red] node [color=red, pos=0.5,right] {$V_{j_{k}}^{*}$} (3)
    (7) edge[draw=black] node [color=black, pos=0.5,left] {$U_{i}^{\dagger}$} (2)
    % (3) edge[draw=red] node [color=red, pos=0.5,right] {$V_{j'_{k-1}}^{(c)^*}$} (8)
    (3) edge[draw=black, dashed] node [color=black, pos=0.5, right] {$M2P_{ij}$}  (5);
\end{tikzpicture}
  \end{center}
\caption{Illustration of $M2P_{ij}$ compression and its redirection. The fill-in is shown through a dashed edge.
% The operators that get updated are shown in red and the operators that get nullified as a result of redirection are shown in blue. 
% Here $c=i \mod 4$, and $i'=\ceil{i/4}$
Here $i'=\mathcal{P}(i)$. It is to be noted that a part of the graph, with only a few connections that get updated as a result of the fill-in redirection, is shown here.}
\label{fig:M2P_compression}
\end{figure}

% \subsubsubsection{L2P, M2L update}
% \noindent
\textbf{L2P, M2L update.}
We find the new orthogonal column basis, that spans the existing column basis of $i$ at level $l$ i.e., $U_{i}$, and the columns of the fill-in $M2P_{ij}$, by finding the RRQR decomposition of the augmented matrix
\begin{equationwithLineno}
    [U_{i} |M2P_{ij}] = \tilde{U}_{i} L_{ij}
    % \equiv V_{j_{k}}' [R_{j} | R_{j}']
\end{equationwithLineno}
By expressing $L_{ij}$ as the augmented matrix, $[L_{i} | \tilde{L}_{ij}]$, we have
\begin{subequations}
   \begin{align}
    U_{i} &= \tilde{U}_{i} L_{i}, \label{eq:L2P_4a}\\
    M2P_{ij} &= \tilde{U}_{i} \tilde{L}_{ij} ,\label{eq:L2P_4b}
\end{align}  
\end{subequations}
Using Equations~\eqref{eq:L2P_4a} and~\eqref{eq:L2P_4b}, $U_{i}A_{ij}+M2P_{ij}$ can be expressed as 
   \begin{align}
    U_{i}A_{ij}+M2P_{ij}
    &=\tilde{U}_{i} L_{i}A_{ij} + \tilde{U}_{i} \tilde{L}_{ij}\\
    &=\tilde{U}_{i} (L_{i}A_{ij} + \tilde{L}_{ij}).
\end{align}  
We then make the following assignments, which update the old operators with the new ones.
   \begin{align}
    A_{ij}&:= L_{i}A_{ij} + \tilde{L}_{ij}\\
    U_{i} &:= \tilde{U}_{i}
\end{align}  

% \subsubsubsection{Other M2L updates}
% \noindent
\textbf{Other M2L updates.}
As a result of the redirection of the fill-in $M2P_{ij}$, the M2Ls $A_{ic}$ where $c\in \{\mathcal{IL}(i)\backslash j\}\bigcup \mathcal{N}(i)$ get updated. The updates follow the same lines as described in Subsubsection~\ref{ssec:P2P_fill_in}.

% \subsubsubsection{Parent L2P update}
% \noindent
\textbf{L2L update.}
As a result of the redirection of the fill-in $M2P_{ij}$, the L2L $U_{i}^{\dagger}$ gets updated. The updates follow the same lines described in Subsubsection~\ref{ssec:P2P_fill_in}.

 \subsection{A more efficient elimination algorithm}
 % We describe in Algorithm~\ref{alg:elimination}, a more efficient algorithm for elimination. 
 For $d>1$, a fill-in corresponding to well-separated hypercubes, say $P2P_{pq}$, could get created or updated multiple times during the elimination process.
 % It is because the cardinality of the set $N_{pq}=\{i: p,q\in\mathcal{N}(i^{(l)})\}$ in higher dimensions ($d>1$) is not $1$ for all ordered pairs $(p,q)$.
 It is because there could be many hypercubes $i^{(l)}$ such that hypercubes $p^{(l)}, q^{(l)}\in\mathcal{N}(i^{(l)})$.
 % It means $P2P_{pq}^{(l)}$ could get created, compressed, and redirected more than once. 
 To avoid the compression and redirection multiple times, we choose not to compress and redirect as and when a fill-in gets created as in Algorithm~\ref{alg:elimination2}, but to update the fill-ins multiple times and compress and redirect only once, just before either $p^{(l)}$ or $q^{(l)}$ gets eliminated as in Algorithm~\ref{alg:elimination}. 
 % Wherein the well-separated fill-ins are not compressed as and when they get created as in Algorithm~\ref{alg:elimination}. As the same fill-ins can get created multiple times before a
 % Wherein a well-separated fill-in associated with node $p^{(l)}$, $P2P_{pq}$ or $M2P_{pq}$ or $P2L_{pq}$ or $P2P_{qp}$ or $M2P_{qp}$ or $P2L_{qp}$, $q\in\mathcal{IL}(p^{(l)})$, is compressed only just before node $p^{(l)}$ gets eliminated. 

 % When well-separated fill-ins occur, the associated indices of boxes are stored in the form of ordered pairs in vectors $vP2P$ and $vP2L$.
 % hold ordered pairs of the well-separated $P2P$ and $P2L$ fill-ins. 
 In Algorithm~\ref{alg:elimination} vectors $vP2P$ and $vP2L$ are used to keep track of the fill-ins corresponding to well-separated hypercubes.
 For the fill-ins $P2P_{pq}^{(l)}$ and $P2P_{qp}^{(l)}$ where $p^{(l)}\in\mathcal{IL}(q^{(l)})$, only one ordered pair $(p^{(l)},q^{(l)})$ is stored in $vP2P$ as they always occur in a pair. Similarly for the fill-ins $P2L_{pq}^{(l)}$ an $M2P_{qp}^{(l)}$ where $p^{(l)}\in\mathcal{IL}(q^{(l)})$, only one ordered pair $(p^{(l)},q^{(l)})$ is stored in $vP2L$. Before a node $i^{(l)}$ gets eliminated, the vectors $vP2P$ and $vP2L$ are searched for an ordered pair with $i^{(l)}$ as one of its elements.
 % for a fill-in associated with node $i^{(l)}$. 
 If it exists then the associated fill-ins are compressed and redirected.
 
 \begin{algorithm}[!htbp]
	\caption{Efficient Elimination algorithm}\label{alg:elimination}
	\begin{algorithmic}[1]
		\Procedure{Efficient\_Elimination}{$n_{\max}$,$\epsilon_{A}$}
		% \State{} \Comment{$n_{\max}$ is the maximum number of particles at leaf level}
% 		\Comment{$\epsilon$ is the tolerance for ACA;}
		\State {Form $\mathcal{T}^L$, where $L= \min \left\{l : \abs{x^{(l)}_{i}} < n_{\max};\forall \text{ hypercubes $i$ at level $l$}\right\}$}
		\State {Perform NNCA with tolerance $\epsilon_{A}$ to find the $L2L/L2P, M2L, M2M/P2M$ operators of all hypercubes at all levels}
		\State{Declare sets $vP2P$ and $vP2L$ that holds integer ordered pairs}
            \For{\texttt{$l=L:2$}} 
				\For{\texttt{$i=e_{1}^{(l)},e_{2}^{(l)},\mathellipsis,e_{2^{dl}}^{(l)}$}} 
				% \Comment{}
				    \For{$(r^{(l)},s^{(l)})$ in $vP2P$}
				        \If{$(i==r^{(l)}||i==s^{(l)})$}
				            \State Compress $P2P_{rs}^{(l)}$ and $P2P_{sr}^{(l)}$ and update the relevant operators as in Subsection~\ref{ssec:P2P_fill_in}.
                                \State Erase $(r^{(l)},s^{(l)})$ in $vP2P$
				        \EndIf
				    \EndFor
				    \For{$(r^{(l)},s^{(l)})$ in $vP2L$}
				        \If{$(i==s^{(l)})$}
				        \State Compress $P2L_{rs}^{(l)}$ and $M2P_{sr}^{(l)}$ and update the relevant operators as in Subsections~\ref{ssec:P2L_fill_in} and~\ref{ssec:M2P_fill_in} respectively.
                            \State Erase $(r^{(l)},s^{(l)})$ in $vP2L$
				        \EndIf
				    \EndFor
				    \State{Eliminate $x_{i}^{(l)}$ and $z_{i}^{(l)}$ from the extended sparse system using Equations~\eqref{eq:EquationX2} and~\eqref{eq:EquationY}}
				    % ; Also perform the corresponding operations on the RHS $b$
				    \State{$E_{i}^{(l)}:=1$}
					\For{$(p^{(l)},q^{(l)})\text{ in }\{(r^{(l)},s^{(l)}): r^{(l)}\in \mathcal{N}(i^{(l)}),s\in\mathcal{N}(i^{(l)})\}$}
						\If{$p^{(l)}$ is eliminated}
    						\If{$q^{(l)}$ is eliminated}
						        \State Results in the update of $M2L_{pq}^{(l)}$ and $M2L_{qp}^{(l)}$
					        \Else
					            \State Results in the update of $P2L_{pq}^{(l)}$ and $M2P_{qp}^{(l)}$
					            \If{$p^{(l)}$ and $q^{(l)}$ are well-separated}
					                \State $vP2L$.push\_back($(p^{(l)},q^{(l)})$)
					               % \State $vM2P$.push\_back($(q,p)$)
					            \EndIf
				            \EndIf
						\Else
                            \If{$q^{(l)}$ is eliminated}
						        \State Results in the update of $P2L_{qp}^{(l)}$ and $M2P_{pq}^{(l)}$
                                \If{$p^{(l)}$ and $q^{(l)}$ are well-separated}
                                    \State $vP2L$.push\_back($(q^{(l)},p^{(l)})$)
                               % \State $vM2P$.push\_back($(q,p)$)
                                \EndIf
					        \Else
					            \State Results in the update of $P2P_{pq}^{(l)}$ and $P2P_{qp}^{(l)}$
					            \If{$p^{(l)}$ and $q^{(l)}$ are well-separated}
					                \State $vP2P$.push\_back($(p^{(l)},q^{(l)})$)
					               % \State $vP2P$.push\_back($(q,p)$)
					            \EndIf
				            \EndIf
			            \EndIf
					\EndFor
				\EndFor
			\EndFor
		\EndProcedure
	\end{algorithmic}
\end{algorithm}

\subsection{Back Substitution or Solve phase}
The third step of AIFMM is the back substitution or solve phase, wherein we solve for the multipoles at level $2$ and then find the unknowns by back substitution. The pseudo-code is described in Algorithm~\ref{alg:substitution}. 
% The equations which we use in the back substitution are 
% and Equations~\eqref{eq:particlesatl}, ~\eqref{eq:particlesatl2}, which we state again below.

 \begin{algorithm}[!htbp]
	\caption{Back Substitution Algorithm}\label{alg:substitution}
	\begin{algorithmic}[1]
		\Procedure{Back\_Substitution}{}
            \State{Solve for the multipoles at level $2$, $y^{(2)}$, directly}
            \For{\texttt{$l=2:L$}} 
				\For{\texttt{$i=e_{2^{dl}}^{(l)},e_{2^{dl}-1}^{(l)}, \mathellipsis, e_{1}^{(l)},$}} 
				% \Comment{Eliminate $x_{i}$ and $z_{i}$}
                \State Find $x_{i}^{(l)}$ and $z_{i}^{(l)}$ by back substitution using Equations~\eqref{eq:EquationX2} and~\eqref{eq:EquationY}.
                \State $E_{i}^{(l)}:=0$
                \State Find $\{y_{i_{c}}^{(l+1)}\}_{c=1}^{2^{d}}$ from $x_{i}^{(l)}$ using Equation~\eqref{eq:particleAtNonLeafLevel}.
				\EndFor
			\EndFor
		\EndProcedure
	\end{algorithmic}
\end{algorithm}

\begin{remark}
The elimination process, similar to the factorize phase in a direct solver, can be decoupled from the right-hand side. So the elimination phase can be considered as the factorize phase and the back substitution phase can be considered as the solve phase.
\end{remark}

\section{Numerical Results} \label{sec:numericalResults}
We perform a total of five experiments to demonstrate the performance of AIFMM as a direct solver and as a preconditioner. 
% \begin{table}[H]
  \begin{table}[!htbp]
\centering
  \begin{tabularx}{\textwidth}{|p{13mm}|X|}
    \hline
    $N$ & System size that denotes the number of particles in the domain.\\ \hline
    $\epsilon_{A}$ & Tolerance set for NNCA and RRQR, of AIFMM.\\ \hline
    $r_{m}$ & Maximum rank of the compressed blocks, which includes the interactions and the fill-ins corresponding to well-separated hypercubes.\\ \hline
    % $\epsilon_{NCA}$ & Compression tolerance of NCA, that is used by the DAFMM and AFMM routines.\\ \hline
    $T_{Aa}$ & Time taken to construct the extended sparse matrix using NNCA.\\ \hline
    $T_{Af}$ & Time taken by the elimination phase of AIFMM excluding the time taken to perform the Schur complement operations on the rhs.\\ \hline
    $T_{As}$ & Sum of the time taken by the back substitution phase of AIFMM and the time taken to perform the Schur complement operations on the rhs, i.e., the respective operations that are to be performed on the rhs during the elimination phase.\\ \hline
    % $T_{A}$ & Total CPU time taken by AIFMM, which is $T_{Aa}+T_{Af}+T_{As}$.\\ \hline
    $E_{A}$ & Relative forward error of AIFMM measured using $\|.\|_{2}$.\\ \hline
    % , i.e., $\frac{\|x-\hat{x}\|_{2}}{\|\hat{x}\|_{2}}$, where $\hat{x}$ is the true solution.
    $\epsilon_{GMRES}$ & The relative residual $\frac{\|A\hat{x}-b\|_{2}}{\|b\|_{2}}$, that is used as the stopping criterion for GMRES, where $\hat{x}$ is the solution computed using GMRES.\\ \hline
    $T_{Ga}$ & For problems involving non-oscillatory Green's functions and the Helmholtz function at low frequency it is the time taken to construct the $\mathcal{H}^{2}$ matrix representation. For problems involving high frequency Helmholtz function it is the time taken to construct the DAFMM matrix~\cite{gujjulaDAFMM}.\\ \hline
    $T_{Gs}$ & Time taken to solve the system using GMRES\\ \hline
    % $T_{G}$ & Total CPU time taken by GMRES, which is $T_{Ga}+T_{Gs}$.\\ \hline
    $I_{G}$ & Number of iterations it takes for convergence by GMRES with no preconditioner.\\ \hline
    $E_{G}$ & Relative forward error of GMRES measured using $\|.\|_{2}$.\\ \hline
    % , i.e., $\frac{\|x-\hat{x}\|_{2}}{\|\hat{x}\|_{2}}$, where $\hat{x}$ is the true solution.\\ \hline
    $T_{Ha}$ & Time taken to assemble the matrix in HODLR form.\\ \hline
    $T_{Hf}$ & Time taken to factorize using HODLR.\\ \hline
    $T_{Hs}$ & Time taken to solve using HODLR.\\ \hline
    % $T_{H}$ & Total CPU time taken by HODLR, which is $T_{Ha}+T_{Hf}+T_{Hs}$.\\ \hline
    $E_{H}$ & Relative forward error of HODLR measured using $\|.\|_{2}$.\\ \hline
    % , i.e., $\frac{\|x-\hat{x}\|_{2}}{\|\hat{x}\|_{2}}$, where $\hat{x}$ is the true solution.
    $I_{pA}$ & Number of iterations it takes for convergence by GMRES with AIFMM as a preconditioner.\\ \hline
    $I_{pH}$ & Number of iterations it takes for convergence by GMRES with HODLR preconditioner.\\ \hline
    $I_{BD}$ & Number of iterations it takes for convergence by GMRES with block-diagonal preconditioner.\\ \hline
    % $T_{BD}$ & Total CPU time taken by GMRES with block-diagonal preconditioner.\\ \hline
    % $T_{pA}$ & Total CPU time taken by GMRES with AIFMM as a preconditioner.\\ \hline
    % $T_{pH}$ & Total CPU time taken by GMRES with HODLR as a preconditioner.\\ \hline
    $T_{BD}$ & Time taken to solve by GMRES with block-diagonal preconditioner.\\ \hline
    $T_{pAs}$ & Time taken to solve by GMRES with AIFMM preconditioner.\\ \hline
    $T_{pHs}$ & Time taken to solve by GMRES with HODLR preconditioner.\\ \hline
     relative & relative forward error in the solution measured using $\|.\|_{2}$.\\
     error & \\ \hline
\end{tabularx}
  \caption{List of notations followed in this section}
    \label{table:ResultsNotations}
 \end{table}
 
  In Experiment 1, the validation, convergence and various benchmarks of AIFMM are presented.
In Experiments 2 to 4, AIFMM is compared with HODLR~\cite{ambikasaran2013mathcal,ambikasaran2019hodlrlib}, a direct solver, and with GMRES~\cite{saad1985generalized,saad2003iterative}, an iterative solver. 
HODLR solver hierarchically partitions the matrix and constructs low-rank approximations of the off-diagonal blocks to a user-specified tolerance $\epsilon_{H}$. 

 In Experiment 5, AIFMM is demonstrated as a preconditioner. GMRES with AIFMM as preconditioner is compared with i) GMRES with no preconditioner ii) GMRES with HODLR as preconditioner iii) block-diagonal preconditioner.
HODLR and AIFMM are used as preconditioners by constructing low-accuracy direct solvers, i.e., a high value of $\epsilon_{H}$ and $\epsilon_{A}$ are used respectively.

  GMRES involves the computation of a matrix-vector product in each of its iterations. In Experiments 2 to 4, this computation is accelerated using NNCA-based fast $\mathcal{H}^{2}$ matrix-vector product, described in~\cite{gujjula2022nca}. While in Experiment 5, where we solve the high frequency scattering problem, we use NNCA-based Directional Algebraic Fast Multipole Method (DAFMM), described in~\cite{gujjulaDAFMM}. Let the compression tolerance of these fast summation techniques be denoted by $\epsilon_{G}$.

 All experiments were carried out on an Intel Xeon 2.5GHz processor. 
 In Experiments 1 to 4, we solve for $x$, in $Ax=b$, where
 \begin{itemize}
     \item $b$ is considered to be a random vector and
     \item the particles $\{u_{i}\}_{i=1}^{N}$ and $\{v_{i}\}_{i=1}^{N}$ are considered to be same and are distributed uniformly in the domain $[-1,1]^{2}$.
 \end{itemize}

Before presenting the experiments, we describe some notations that are used in this section in Table~\ref{table:ResultsNotations}.

 % to solve the system
 % where $b$ is considered to be a random vector.
 
 \subsection{Experiment 1: Validation and convergence of AIFMM}~\label{ssec:exp1}
Here we consider the 2D Helmholtz function with the wavenumber set to $1$. To have a well-conditioned matrix, we consider the entries of the matrix to be
  \begin{equationwithLineno}
 A_{i,j}=
    \begin{cases}
    \sqrt{1000N} &\text{if } i=j\\
    \frac{\iota}{4}H_{0}^{(1)}(\|x_{i}-x_{j}\|_{2}) &\text{else}
    \end{cases}.
 \end{equationwithLineno}
We plot $r_{m}$, assembly time, factorization time, solve time, and relative error versus $N$ in Figure~\ref{fig:convergence} for various values of $\epsilon_{A}$. The following inferences are to be noticed from the figure.
 \begin{enumerate}
    \item 
    The relative error for a given $\epsilon_{A}$ is almost constant as $N$ increases. 
    \item 
    The relative error decreases as $\epsilon_{A}$ decreases, which validates the convergence of AIFMM.
    \item 
    Assembly time, solve time, and factorization time scale linearly with $N$.
 \end{enumerate}
 
   \begin{figure}[!htbp]
      \begin{center}
          \begin{subfigure}[b]{0.4\textwidth}
            \includegraphics[width=\linewidth]{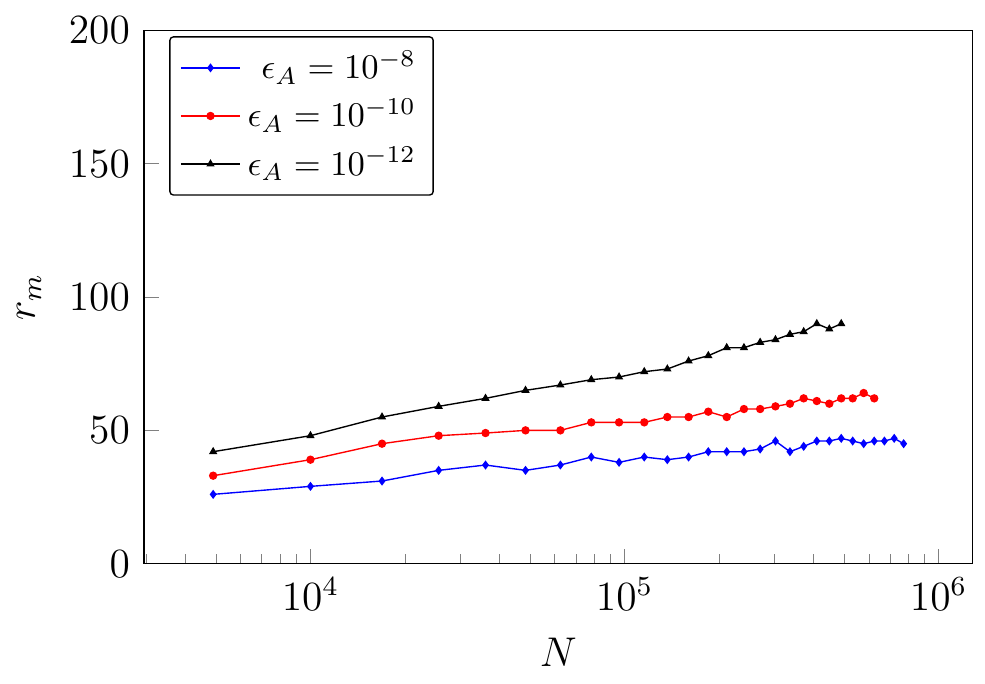}
            % \includestandalone[width=\linewidth]{latexFigures/convergenceMaxRankVsN}
          \end{subfigure}%
          \begin{subfigure}[b]{0.4\textwidth}
            \includegraphics[width=\linewidth]{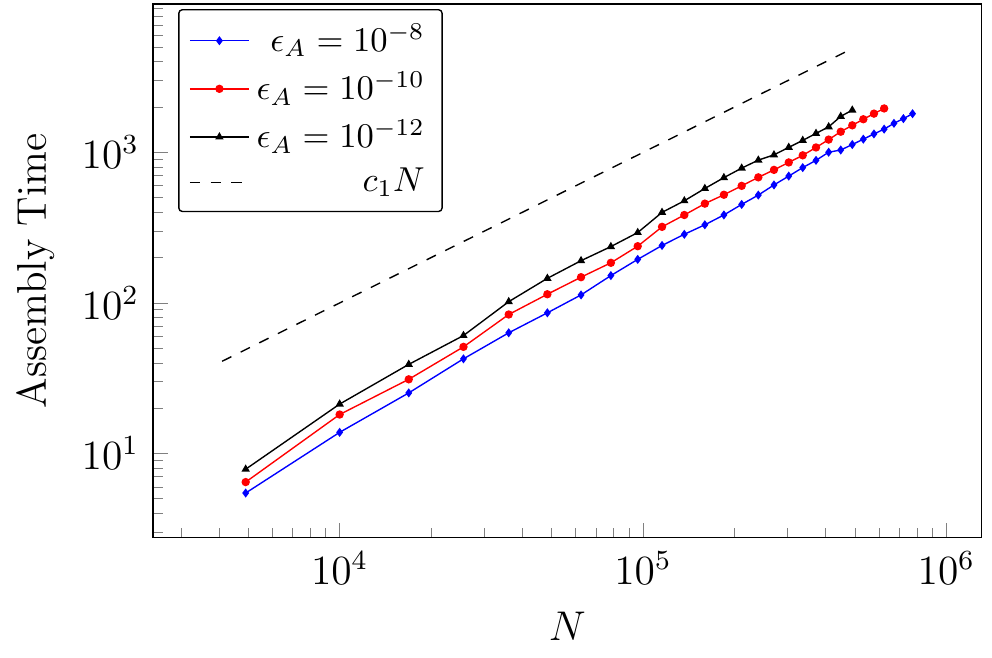}
            % \includestandalone[width=\linewidth]{latexFigures/convergenceAssTimeVsN}
          \end{subfigure}%
          
          \begin{subfigure}[b]{0.4\textwidth}
            \includegraphics[width=\linewidth]{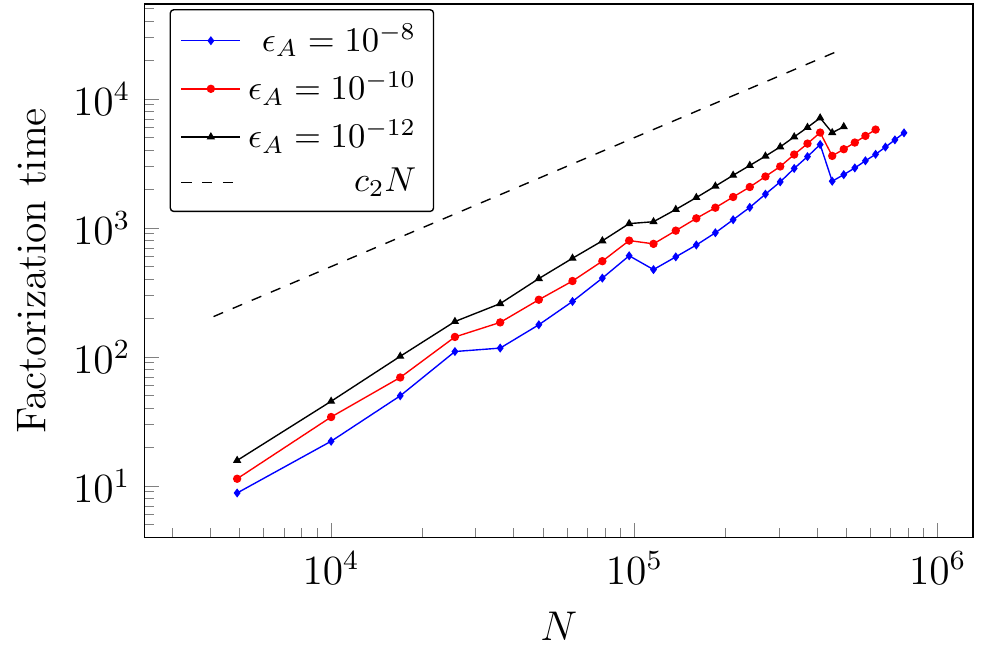}
            % \includestandalone[width=\linewidth]{latexFigures/convergenceFactTimeVsN}
          \end{subfigure}%
          \begin{subfigure}[b]{0.4\textwidth}
            \includegraphics[width=\linewidth]{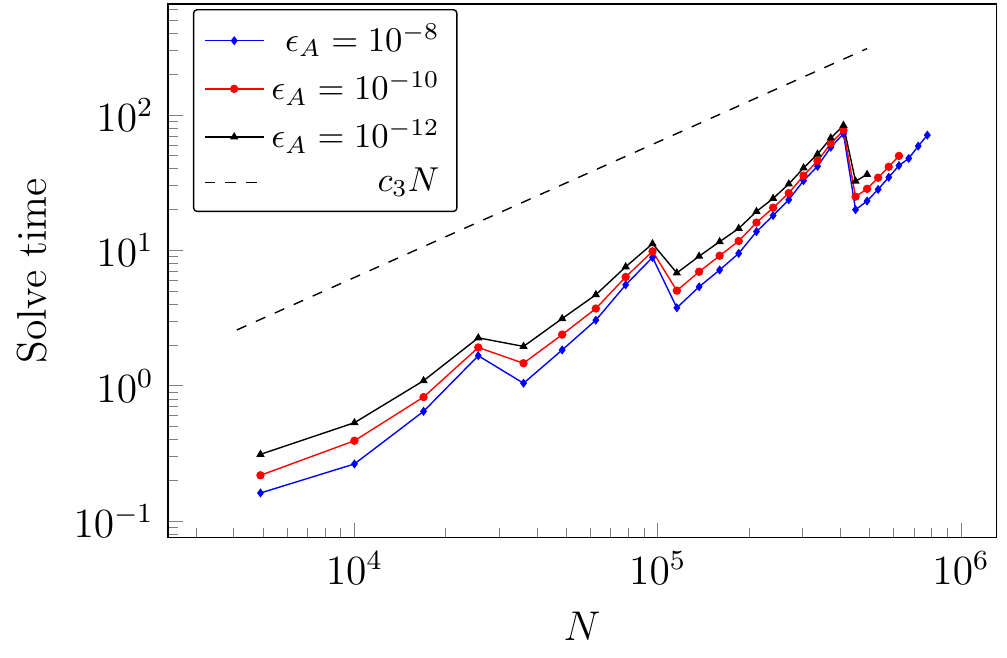}
            % \includestandalone[width=\linewidth]{latexFigures/convergenceSolveTimeVsN}
          \end{subfigure}
          
          \begin{subfigure}[b]{0.4\textwidth}
            \includegraphics[width=\linewidth]{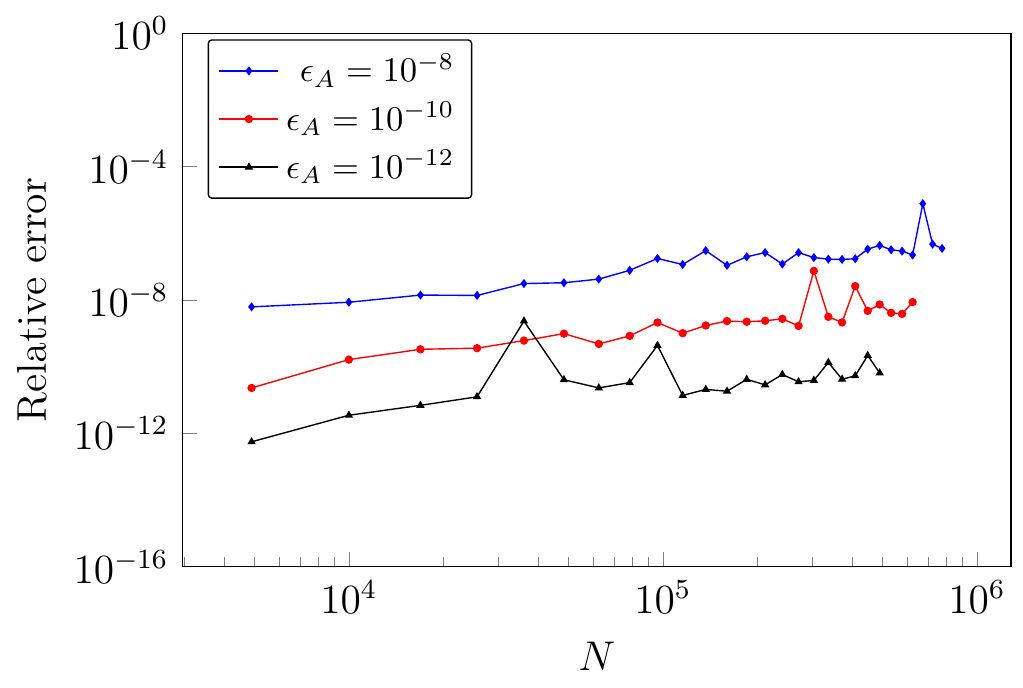}
            % \includestandalone[width=\linewidth]{latexFigures/convergenceErrorVsN}
          \end{subfigure}
        \end{center}
        \caption{Results obtained with Experiment 1; Various benchmarks of AIFMM plotted for different values of $\epsilon_{A}$} \label{fig:convergence}
        \end{figure}

 \subsection{Experiment 2: Comparison of AIFMM with HODLR and GMRES for the 2D Helmholtz kernel}
 Here we consider the same matrix as considered in Subsection~\ref{ssec:exp1}.
  % The entries of the matrix are defined by
 $\epsilon_{A}$, $\epsilon_{G}$, and $\epsilon_{H}$ are set to $10^{-10}$. $\epsilon_{GMRES}$ is also set to $10^{-10}$. 
 % We plot $r_{m}$, assembly time, factorization time, solve time, and relative error of AIFMM in comparison to those of HODLR and GMRES versus $N$ in Figure~\ref{fig:exp3}. In Table~\ref{table:exp3}, we tabulate the total CPU times and the relative errors of the three solvers, and the speed up gained by AIFMM over the HODLR and GMRES solvers.
 We tabulate the various CPU times and the relative errors of the three solvers AIFMM, GMRES, and HODLR in Table~\ref{table:exp3}. Some of these benchmarks are also illustrated in Figure~\ref{fig:exp3}.

 % The same inferences as drawn in Subsection~\ref{ssec:Exp2} can be drawn here too.
    \begin{table}[!htbp]
    % \begin{table}[!htbp]
      \centering
      \resizebox{\columnwidth}{!}{
      \setlength\extrarowheight{0.9pt}
      \begin{tabular}{|c|c|c|c|c|c|c|c|c|c|c|c|c|c|c|c|}
        \hline
         & \multicolumn{4}{|c|}{Assembly} & \multicolumn{3}{|c|}{Factorization} & \multicolumn{5}{|c|}{Solve} & \multicolumn{3}{|c|}{Error}\\ \cline{2-16}
         $N$ & $T_{Ga}$ & $T_{Ha}$ & $T_{Aa}$ & $\frac{T_{Ha}}{T_{Aa}}$ & $T_{Hf}$ & $T_{Af}$ & $\frac{T_{Hf}}{T_{Af}}$ & $T_{Gs}$ & $T_{Hs}$ & $T_{As}$ & $\frac{T_{Gs}}{T_{As}}$ & $\frac{T_{Hs}}{T_{As}}$ & $E_{G}$ & $E_{H}$ & $E_{A}$\\ \hline
         \hline
4900 & 4.3 & 20.1 & 6.4 & 3.2 &      9.1 & 11.4 & 0.8 &     19.6 & 0.1 & 0.2 & 89.7 & 0.3 &      4e-11 & 9e-11 & 1e-11 \\ \hline 
% 10000 & 14.0 & 73.0 & 18.0 & 4.1 &      35.8 & 34.1 & 1.0 &     32.8 & 0.2 & 0.4 & 83.5 & 0.4 &      1e-10 & 1e-11 & 2e-10 \\ \hline 
16900 & 21.5 & 191.8 & 30.9 & 6.2 &      103.5 & 69.3 & 1.5 &     92.4 & 0.4 & 0.8 & 112.1 & 0.5 &      4e-10 & 3e-11 & 3e-10 \\ \hline 
% 25600 & 30.9 & 416.5 & 51.1 & 8.2 &      232.2 & 142.4 & 1.6 &     211.7 & 0.7 & 1.9 & 111.2 & 0.4 &      3e-10 & 1e-09 & 3e-10 \\ \hline 
36100 & 65.0 & 793.8 & 83.8 & 9.5 &      439.7 & 184.3 & 2.4 &     140.8 & 1.2 & 1.4 & 97.9 & 0.8 &      5e-10 & 3e-11 & 5e-10 \\ \hline 
% 48400 & 83.9 & 1368.3 & 112.2 & 12.2 &      761.1 & 276.0 & 2.8 &     261.2 & 1.9 & 2.4 & 109.9 & 0.8 &      1e-09 & 1e-09 & 1e-09 \\ \hline 
62500 & 100.7 & 2231.1 & 143.3 & 15.6 &      1283.3 & 385.8 & 3.3 &     440.7 & 2.9 & 3.7 & 119.5 & 0.8 &      5e-10 & 1e-09 & 5e-10 \\ \hline 
% 78400 & 121.1 & 3357.0 & 185.1 & 18.1 &      1930.5 & 551.3 & 3.5 &     729.7 & 3.9 & 6.3 & 115.7 & 0.6 &      8e-10 & 4e-09 & 8e-10 \\ \hline 
% 96100 & 143.5 & 5075.0 & 236.6 & 21.5 &      2874.7 & 795.0 & 3.6 &     1102.7 & 5.3 & 9.8 & 112.9 & 0.5 &      2e-09 & 2e-09 & 2e-09 \\ \hline 
115600 & 254.2 & 7211.8 & 318.1 & 22.7 &      4199.6 & 750.9 & 5.6 &     504.6 & 7.1 & 5.0 & 100.1 & 1.4 &      1e-09 & 3e-10 & 1e-09 \\ \hline 
% 136900 & 300.4 & 9888.9 & 383.7 & 25.8 &      5701.1 & 950.0 & 6.0 &     757.0 & 9.0 & 6.9 & 109.3 & 1.3 &      2e-09 & 1e-09 & 2e-09 \\ \hline 
160000 & 347.2 & 13347.6 & 450.1 & 29.7 &      7658.6 & 1178.5 & 6.5 &     1025.4 & 10.9 & 9.1 & 113.0 & 1.2 &      2e-09 & 6e-10 & 2e-09 \\ \hline 
% 184900 & 384.0 & - & 516.0 & - &      - & 1419.4 & - &     1381.5 & - & 11.5 & 120.0 & - &      2e-09 & - & 2e-09 \\ \hline 
% 211600 & 419.6 & - & 581.3 & - &      - & 1716.3 & - &     1785.6 & - & 15.8 & 113.0 & - &      2e-09 & - & 2e-09 \\ \hline 
240100 & 459.7 & - & 657.4 & - &      - & 2060.6 & - &     2288.1 & - & 20.5 & 111.8 & - &      3e-09 & - & 3e-09 \\ \hline 
% 270400 & 516.0 & - & 762.0 & - &      - & 2499.9 & - &     3121.5 & - & 26.5 & 117.8 & - &      2e-09 & - & 2e-09 \\ \hline 
% 302500 & 557.7 & - & 861.4 & - &      - & 3018.7 & - &     3867.1 & - & 35.9 & 107.6 & - &      2e-09 & - & 3e-08 \\ \hline 
336400 & 617.2 & - & 976.4 & - &      - & 3708.5 & - &     4552.1 & - & 45.8 & 99.4 & - &      3e-09 & - & 3e-09 \\ \hline 
% 372100 & 674.8 & - & 1103.4 & - &      - & 4503.7 & - &     5843.4 & - & 61.1 & 95.7 & - &      2e-09 & - & 2e-09 \\ \hline 
% 409600 & 702.7 & - & 1214.0 & - &      - & 5464.6 & - &     7050.6 & - & 76.2 & 92.5 & - &      3e-09 & - & 3e-08 \\ \hline 
448900 & 1099.2 & - & 1376.8 & - &      - & 3619.1 & - &     2636.3 & - & 25.0 & 105.4 & - &      5e-09 & - & 5e-09 \\ \hline 
% 490000 & 1197.8 & - & 1519.1 & - &      - & 4079.8 & - &     3157.4 & - & 28.7 & 109.9 & - &      7e-09 & - & 7e-09 \\ \hline 
% 532900 & 1297.2 & - & 1664.5 & - &      - & 4594.4 & - &     3707.6 & - & 34.2 & 108.5 & - &      4e-09 & - & 4e-09 \\ \hline 
577600 & 1393.8 & - & 1809.0 & - &      - & 5166.7 & - &     4373.4 & - & 41.5 & 105.3 & - &      4e-09 & - & 4e-09 \\ \hline 
% 624100 & 1490.0 & - & 1960.0 & - &      - & 5776.8 & - &     5055.6 & - & 49.7 & 101.8 & - &      9e-09 & - & 9e-09 \\ \hline 
672400 & 1586.7 & - & 2120.3 & - &      - & 6429.6 & - &     6201.4 & - & 56.6 & 109.7 & - &      7e-09 & - & 8e-09 \\ \hline
\end{tabular}
}
    \caption{Results obtained with experiment 2; CPU times and relative errors of the three solvers}
    \label{table:exp3}
\end{table}

    \begin{figure}[!htbp]
  \begin{center}
    \begin{subfigure}[b]{0.4\textwidth}
      \centering
            \includegraphics[width=\linewidth]{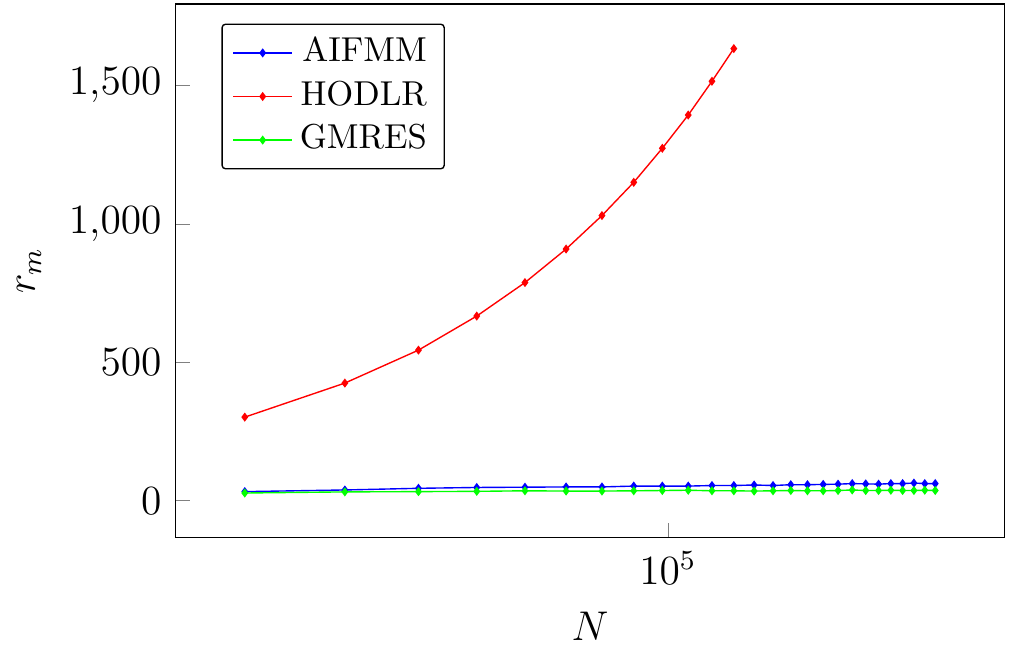}
       % \includestandalone[width=\linewidth]{latexFigures/exp2rankVsN}
      \end{subfigure}%
      \begin{subfigure}[b]{0.4\textwidth}
        \centering
            \includegraphics[width=\linewidth]{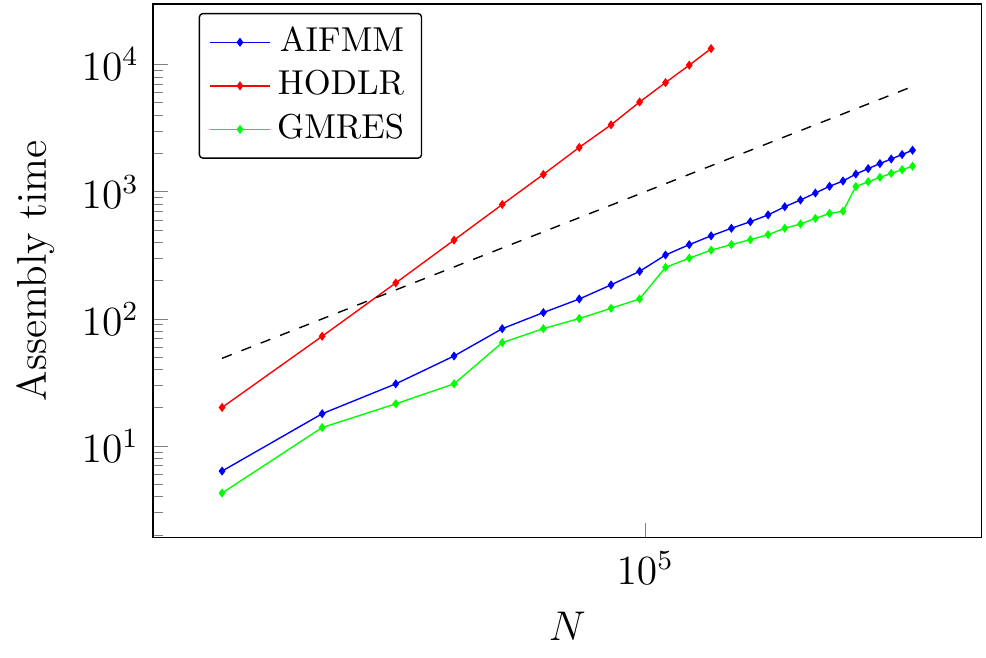}
            % \includestandalone[width=\linewidth]{latexFigures/exp2assTimeVsN}
        \end{subfigure}%\\[1ex]
        
      \begin{subfigure}[b]{0.4\textwidth}
        \centering
        \includegraphics[width=\linewidth]{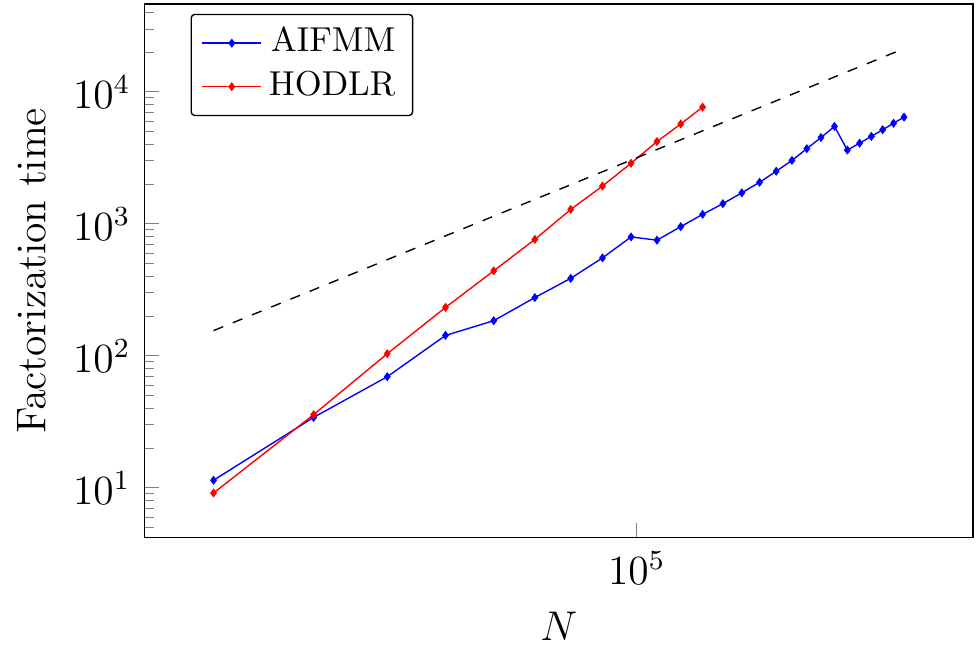}
        % \includestandalone[width=\linewidth]{latexFigures/exp2factTimeVsN}
        \end{subfigure}%
        \begin{subfigure}[b]{0.4\textwidth}
          \centering
              \includegraphics[width=\linewidth]{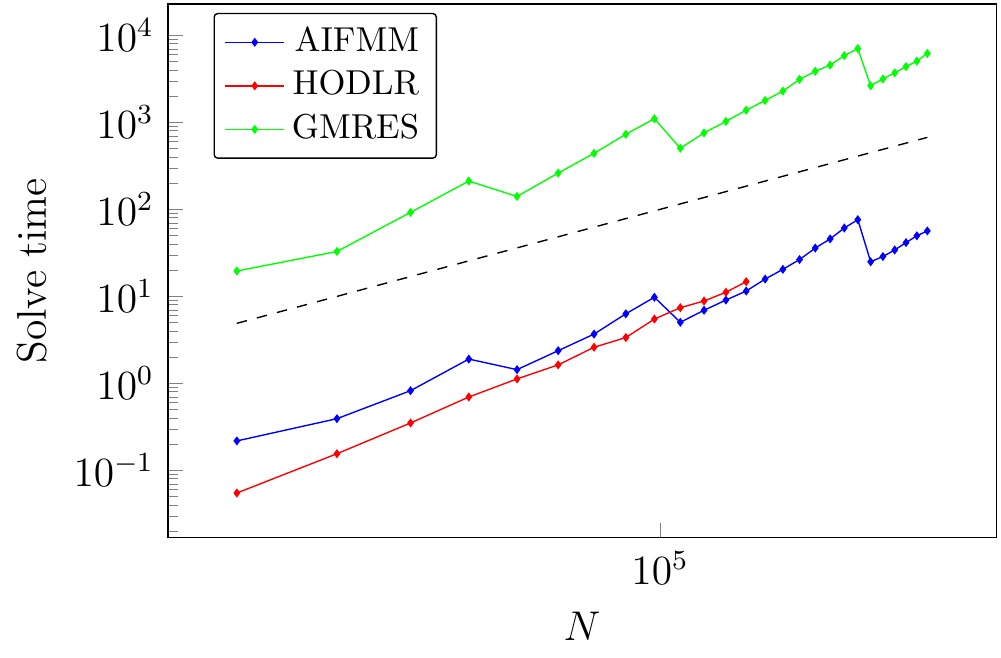}
            % \includestandalone[width=\linewidth]{latexFigures/exp2solveTimeVsN}
          \end{subfigure}%
      \caption{Results obtained with Experiment 2; Plots of $r_{m}$, assembly time, factorisation time, and solve time versus $N$ of AIFMM in comparison to those of HODLR and GMRES}
      \label{fig:exp3}
  \end{center}
\end{figure}

 \subsection{Experiment 3: Comparison of AIFMM with HODLR and GMRES for the 2D Laplace kernel}\label{ssec:Exp2}
 Here we consider the 2D Laplace kernel. Again to ensure a well-conditioned matrix, we consider the entries of the matrix to be
  % The entries of the matrix are defined by
  \begin{equationwithLineno}
 A_{i,j}=
    \begin{cases}
    \sqrt{1000N} &\text{if } i==j\\
    \frac{1}{\|x_{i}-x_{j}\|_{2}} &\text{else}
    \end{cases}.
 \end{equationwithLineno}
$\epsilon_{A}$, $\epsilon_{G}$ and $\epsilon_{H}$ are set to $10^{-10}$, $10^{-8}$, and $10^{-10}$ respectively. 
% We use the fast $\mathcal{H}^{2}$ matrix-vector product described in~\cite{gujjula2022nca} to accelerate the matrix-vector products that are to be computed in each iteration of GMRES. 
% The tolerance for NCA of the $\mathcal{H}^{2}$ matrix-vector product routine is set to $10^{-8}$. 
% The tolerance used by the HODLR routine in compressing the off-diagonal blocks is set to $10^{-10}$. 
% $\epsilon_{H}$ and $\epsilon_{GMRES}$ are set to $10^{-10}$. 
We used different compression tolerances to ensure that the relative errors of the three solvers AIFMM, GMRES, and HODLR are of the same order so that the CPU times of the solvers can be compared and an inference can be drawn on which solver performs better.
$\epsilon_{GMRES}$ is set to $10^{-10}$. 
We tabulate the various CPU times and the relative errors of the three solvers AIFMM, GMRES and HODLR in Table~\ref{table:exp2}. Some of these benchmarks are also illustrated in Figure~\ref{exp1VsN}.
% assembly time, factorization time, solve time, and relative error of AIFMM in comparison to those of HODLR and GMRES versus $N$ in Figure~\ref{exp1VsN}. 
% In Table~\ref{table:exp2}, we tabulate the total CPU times and the relative errors of the three solvers, and the speed up gained by AIFMM over HODLR and GMRES solvers.
 
    \begin{table}[!htbp]
    % \begin{table}[!htbp]
      \centering
      \resizebox{\columnwidth}{!}{
      \setlength\extrarowheight{0.9pt}
      \begin{tabular}{|c|c|c|c|c|c|c|c|c|c|c|c|c|c|c|c|}
        \hline
         & \multicolumn{4}{|c|}{Assembly} & \multicolumn{3}{|c|}{Factorization} & \multicolumn{5}{|c|}{Solve} & \multicolumn{3}{|c|}{Error}\\ \cline{2-16}
         $N$ & $T_{Ga}$ & $T_{Ha}$ & $T_{Aa}$ & $\frac{T_{Ha}}{T_{Aa}}$ & $T_{Hf}$ & $T_{Af}$ & $\frac{T_{Hf}}{T_{Af}}$ & $T_{Gs}$ & $T_{Hs}$ & $T_{As}$ & $\frac{T_{Gs}}{T_{As}}$ & $\frac{T_{Hs}}{T_{As}}$ & $E_{G}$ & $E_{H}$ & $E_{A}$\\ \hline
         \hline
 4900 & 0.7 & 6.1 & 0.7 & 9.0 &      6.8 & 4.5 & 1.5 &     0.8 & 0.1 & 0.1 & 5.4 & 0.4 &      2e-08 & 5e-11 & 2e-08 \\ \hline 
% 10000 & 1.9 & 29.1 & 2.0 & 14.4 &      28.0 & 16.2 & 1.7 &     1.8 & 0.2 & 0.3 & 6.1 & 0.6 &      4e-08 & 1e-09 & 5e-08 \\ \hline 
16900 & 3.7 & 86.7 & 4.0 & 21.7 &      81.2 & 37.4 & 2.2 &     4.6 & 0.4 & 0.5 & 8.4 & 0.6 &      5e-08 & 5e-09 & 5e-08 \\ \hline 
% 25600 & 6.0 & 217.4 & 6.4 & 33.9 &      180.6 & 73.1 & 2.5 &     10.4 & 0.7 & 0.9 & 11.0 & 0.8 &      1e-07 & 5e-08 & 1e-07 \\ \hline 
36100 & 11.1 & 393.9 & 11.9 & 33.0 &      333.6 & 125.5 & 2.7 &     13.7 & 1.3 & 1.3 & 10.8 & 1.1 &      2e-07 & 5e-09 & 2e-07 \\ \hline 
% 48400 & 16.2 & 684.8 & 17.1 & 39.9 &      563.8 & 193.9 & 2.9 &     22.1 & 1.7 & 1.8 & 12.3 & 0.9 &      2e-07 & 1e-07 & 2e-07 \\ \hline 
62500 & 21.7 & 1147.4 & 22.7 & 50.6 &      966.1 & 275.7 & 3.5 &     38.3 & 2.7 & 2.4 & 15.7 & 1.1 &      2e-07 & 2e-06 & 2e-07 \\ \hline 
% 78400 & 28.1 & 1777.8 & 28.2 & 63.1 &      1406.8 & 385.3 & 3.7 &     69.6 & 3.8 & 3.4 & 20.4 & 1.1 &      3e-07 & 3e-08 & 3e-07 \\ \hline 
96100 & 34.2 & 2559.5 & 34.5 & 74.2 &      2030.0 & 526.0 & 3.9 &     204.1 & 5.1 & 4.6 & 44.1 & 1.1 &      8e-07 & 2e-07 & 7e-07 \\ \hline 
% 115600 & 54.3 & 4028.7 & 57.2 & 70.4 &      3075.6 & 676.2 & 4.5 &     98.0 & 7.0 & 4.9 & 19.9 & 1.4 &      2e-07 & 3e-06 & 3e-07 \\ \hline 
136900 & 69.4 & 5809.5 & 69.0 & 84.2 &      4187.1 & 844.9 & 5.0 &     118.9 & 8.4 & 6.1 & 19.4 & 1.4 &      3e-07 & 3e-07 & 3e-07 \\ \hline 
% 160000 & 84.2 & 8095.8 & 82.6 & 98.1 &      5567.3 & 1055.0 & 5.3 &     181.0 & 12.4 & 7.4 & 24.4 & 1.7 &      3e-07 & 2e-06 & 3e-07 \\ \hline 
184900 & 91.9 & 10780.8 & 94.9 & 113.7 &      7265.3 & 1283.2 & 5.7 &     224.6 & 14.5 & 8.9 & 25.3 & 1.6 &      4e-07 & 4e-07 & 4e-07 \\ \hline 
% 211600 & 114.8 & - & 107.7 & - &      - & 1542.7 & - &     250.9 & - & 10.8 & 23.2 & - &      5e-07 & - & 5e-07 \\ \hline 
240100 & 128.5 & - & 125.8 & - &      - & 1819.3 & - &     351.6 & - & 12.7 & 27.7 & - &      4e-07 & - & 4e-07 \\ \hline 
% 270400 & 142.7 & - & 137.0 & - &      - & 2119.0 & - &     372.5 & - & 14.9 & 24.9 & - &      4e-07 & - & 4e-07 \\ \hline 
% 302500 & 165.7 & - & 155.7 & - &      - & 2498.6 & - &     522.5 & - & 18.3 & 28.5 & - &      9e-07 & - & 9e-07 \\ \hline 
336400 & 179.2 & - & 183.8 & - &      - & 2914.8 & - &     648.5 & - & 21.4 & 30.3 & - &      6e-07 & - & 6e-07 \\ \hline 
% 372100 & 197.6 & - & 203.4 & - &      - & 3376.6 & - &     847.7 & - & 25.7 & 33.0 & - &      5e-07 & - & 5e-07 \\ \hline 
% 409600 & 217.5 & - & 224.9 & - &      - & 3840.2 & - &     1145.9 & - & 29.4 & 39.0 & - &      5e-07 & - & 5e-07 \\ \hline 
% 448900 & 294.8 & - & 276.9 & - &      - & 4176.2 & - &     688.8 & - & 24.7 & 27.9 & - &      7e-07 & - & 7e-07 \\ \hline 
490000 & 317.4 & - & 301.8 & - &      - & 4612.9 & - &     796.2 & - & 27.5 & 28.9 & - &      7e-07 & - & 7e-07 \\ \hline 
\end{tabular}
}
        \caption{Results obtained with experiment 3; CPU times and relative errors of the three solvers}
        \label{table:exp2}
     \end{table}

    \begin{figure}[!htbp]
  \begin{center}
    \begin{subfigure}[b]{0.4\textwidth}
      \centering
       \includegraphics[width=\linewidth]{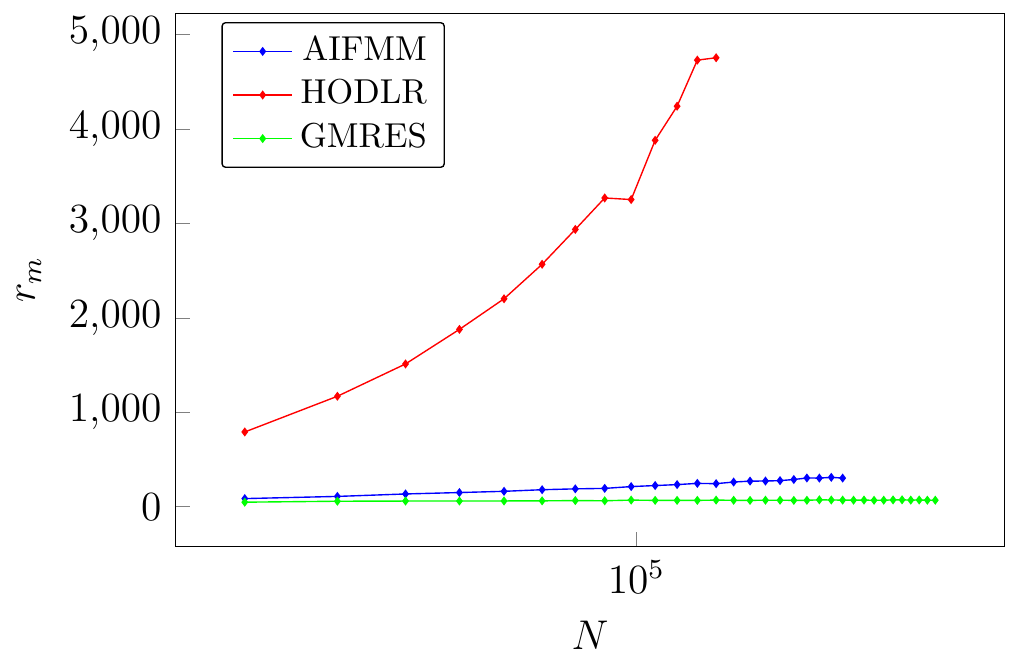}% \includestandalone[width=\linewidth]{latexFigures/exp1rankVsN}
      \end{subfigure}%
      \begin{subfigure}[b]{0.4\textwidth}
        \centering
        \includegraphics[width=\linewidth]{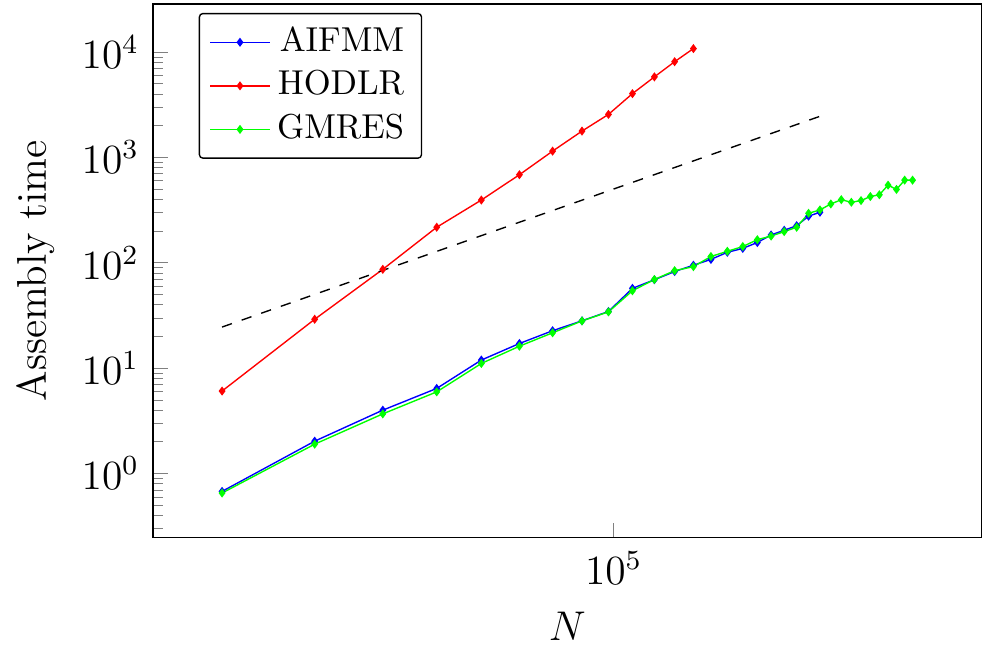}% \includestandalone[width=\linewidth]{latexFigures/exp1assTimeVsN}
        \end{subfigure}%\\[1ex]
        
      \begin{subfigure}[b]{0.4\textwidth}
        \centering
        \includegraphics[width=\linewidth]{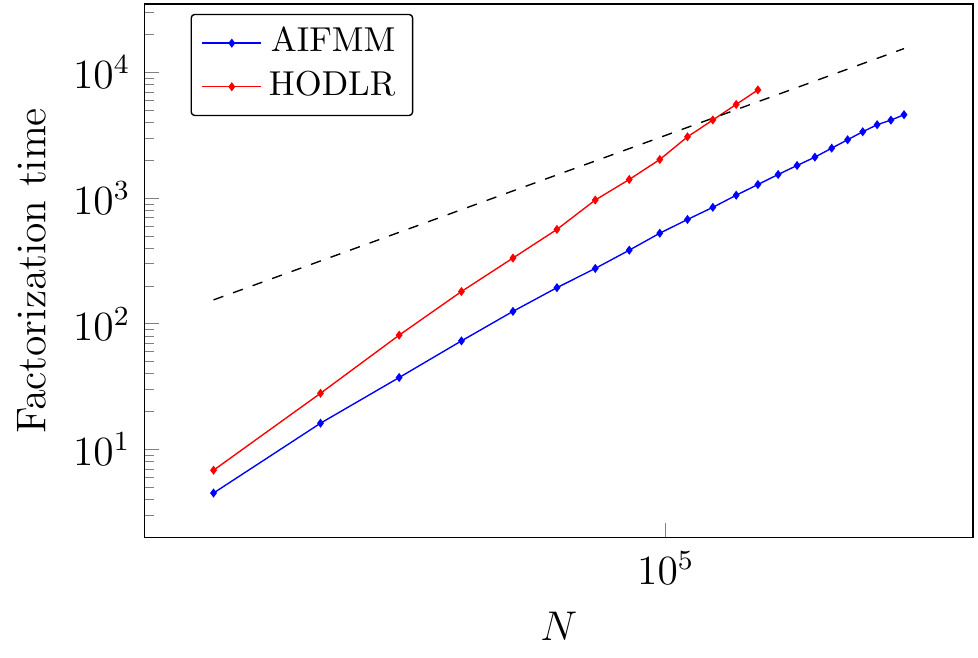}% \includestandalone[width=\linewidth]{latexFigures/exp1factTimeVsN}
        \end{subfigure}%
        \begin{subfigure}[b]{0.4\textwidth}
          \centering
          \includegraphics[width=\linewidth]{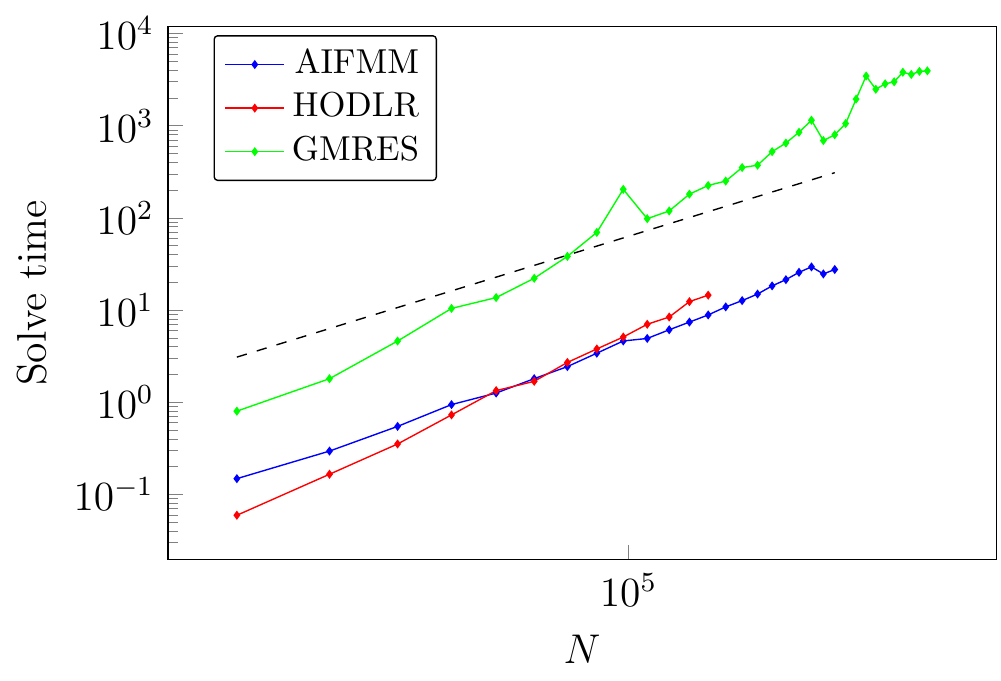}% \includestandalone[width=\linewidth]{latexFigures/exp1solveTimeVsN}
          \end{subfigure}%
      \caption{Results obtained with Experiment 3; Plots of $r_{m}$, assembly time, factorisation time, and solve time versus $N$ of AIFMM in comparison to those of HODLR and GMRES}
      \label{exp1VsN}
  \end{center}
\end{figure}

 \subsection{Experiment 4: Comparison of AIFMM with HODLR and GMRES in solving an integral equation}
Consider the Fredholm integral equation of the second kind as defined in Equation~\eqref{eq:Fredholmeq},
\begin{equationwithLineno}\label{eq:Fredholmeq}
    \sigma_{x}+\int_{\Omega}K(x,y)\sigma_{y}dy = f(x)
\end{equationwithLineno}
where $K(x,y)=\log(\|x-y\|_{2})$ and $\Omega=[-1,1]^{2}$.
We discretize Equation~\eqref{eq:Fredholmeq} using the Nystrom discretization on a uniform grid, which yields a linear system of the form $Ax=b$. Here we consider $b$ to be a random vector. $\epsilon_{A}$, $\epsilon_{G}$, and $\epsilon_{H}$ are set to $10^{-10}$, $10^{-8}$, and $10^{-8}$ respectively.
% The tolerance for NCA of the $\mathcal{H}^{2}$ matrix-vector product routine, and the tolerance used by the HODLR routine in compressing the off-diagonal blocks are set to $10^{-8}$. 
$\epsilon_{GMRES}$ is set to $10^{-10}$. 
% We used different compression tolerances for AIFMM and HODLR, to have the relative errors of the AIFMM and HODLR routine be nearly equal, so that the CPU times of the solvers can be compared and an inference can be drawn on which solver performs better.
 % We plot $r_{m}$, assembly time, factorization time, solve time, and relative error of AIFMM in comparison to those of HODLR and GMRES versus $N$ in Figure~\ref{fig:exp4}. In Table~\ref{table:exp4}, we tabulate the total CPU times and the relative errors of the three solvers, and the speed up gained by AIFMM over the HODLR and GMRES solvers.
 We tabulate the various CPU times and the relative errors of the three solvers AIFMM, GMRES, and HODLR in Table~\ref{table:exp4}. Some of these benchmarks are also illustrated in Figure~\ref{fig:exp4}.
 % The same inferences as drawn in Subsection~\ref{ssec:Exp2} can be drawn here too.
    % \begin{table}[H]
    \begin{table}[!htbp]
    % \begin{table}[!htbp]
      \centering
      \resizebox{\columnwidth}{!}{
      \setlength\extrarowheight{0.9pt}
      \begin{tabular}{|c|c|c|c|c|c|c|c|c|c|c|c|c|c|c|c|}
        \hline
         & \multicolumn{4}{|c|}{Assembly} & \multicolumn{3}{|c|}{Factorization} & \multicolumn{5}{|c|}{Solve} & \multicolumn{3}{|c|}{Error}\\ \cline{2-16}
         $N$ & $T_{Ga}$ & $T_{Ha}$ & $T_{Aa}$ & $\frac{T_{Ha}}{T_{Aa}}$ & $T_{Hf}$ & $T_{Af}$ & $\frac{T_{Hf}}{T_{Af}}$ & $T_{Gs}$ & $T_{Hs}$ & $T_{As}$ & $\frac{T_{Gs}}{T_{As}}$ & $\frac{T_{Hs}}{T_{As}}$ & $E_{G}$ & $E_{H}$ & $E_{A}$\\ \hline
         \hline
4900 & 0.6 & 0.7 & 0.8 & 0.9 &      0.5 & 1.8 & 0.3 &     1.7 & 0.0 & 0.0 & 38.5 & 0.2 &      7e-11 & 4e-10 & 2e-10 \\ \hline 
% 10000 & 1.9 & 2.2 & 2.3 & 1.0 &      1.8 & 5.7 & 0.3 &     2.6 & 0.0 & 0.1 & 33.0 & 0.4 &      5e-10 & 7e-10 & 6e-10 \\ \hline 
16900 & 2.9 & 5.9 & 3.9 & 1.5 &      4.7 & 10.4 & 0.4 &     7.0 & 0.1 & 0.2 & 46.3 & 0.4 &      6e-10 & 2e-10 & 5e-10 \\ \hline 
% 25600 & 4.2 & 12.4 & 6.1 & 2.0 &      10.0 & 19.8 & 0.5 &     13.7 & 0.1 & 0.3 & 41.6 & 0.3 &      4e-10 & 1e-09 & 3e-10 \\ \hline 
36100 & 9.2 & 23.5 & 11.4 & 2.1 &      18.0 & 29.5 & 0.6 &     10.1 & 0.2 & 0.3 & 35.6 & 0.6 &      4e-09 & 6e-10 & 4e-09 \\ \hline 
% 48400 & 11.9 & 41.3 & 15.2 & 2.7 &      30.8 & 41.1 & 0.7 &     17.0 & 0.3 & 0.5 & 36.7 & 0.6 &      4e-09 & 4e-10 & 4e-09 \\ \hline 
62500 & 14.2 & 65.3 & 19.1 & 3.4 &      49.6 & 55.2 & 0.9 &     27.6 & 0.5 & 0.7 & 41.3 & 0.7 &      2e-09 & 3e-09 & 2e-09 \\ \hline 
% 78400 & 16.9 & 101.3 & 24.1 & 4.2 &      75.9 & 76.9 & 1.0 &     41.6 & 0.7 & 1.1 & 36.4 & 0.6 &      2e-09 & 7e-10 & 2e-09 \\ \hline 
96100 & 20.0 & 148.3 & 29.8 & 5.0 &      109.1 & 105.2 & 1.0 &     61.7 & 0.9 & 1.6 & 39.0 & 0.6 &      6e-09 & 2e-09 & 6e-09 \\ \hline 
% 115600 & 36.8 & 209.3 & 45.3 & 4.6 &      155.1 & 115.2 & 1.3 &     32.1 & 1.3 & 1.0 & 31.7 & 1.2 &      7e-09 & 1e-09 & 7e-09 \\ \hline 
% 136900 & 43.9 & 274.9 & 54.7 & 5.0 &      201.6 & 142.0 & 1.4 &     43.6 & 1.5 & 1.3 & 33.3 & 1.2 &      5e-09 & 2e-09 & 5e-09 \\ \hline 
160000 & 50.4 & 378.6 & 64.1 & 5.9 &      277.0 & 168.7 & 1.6 &     57.8 & 2.0 & 1.7 & 33.2 & 1.1 &      1e-08 & 1e-09 & 1e-08 \\ \hline 
% 184900 & 56.6 & 494.9 & 72.6 & 6.8 &      349.3 & 199.4 & 1.8 &     76.8 & 2.3 & 2.1 & 37.0 & 1.1 &      5e-09 & 1e-09 & 5e-09 \\ \hline 
% 211600 & 62.6 & 649.0 & 80.1 & 8.1 &      470.1 & 236.5 & 2.0 &     97.7 & 2.9 & 2.8 & 35.2 & 1.0 &      6e-09 & 3e-09 & 6e-09 \\ \hline 
240100 & 68.2 & 806.8 & 89.2 & 9.0 &      571.2 & 275.2 & 2.1 &     123.8 & 3.5 & 3.4 & 36.4 & 1.0 &      4e-09 & 3e-09 & 4e-09 \\ \hline 
% 270400 & 75.7 & - & 100.3 & - &      - & 329.5 & - &     157.1 & - & 4.3 & 36.3 & - &      9e-09 & - & 9e-09 \\ \hline 
% 302500 & 82.1 & - & 111.7 & - &      - & 387.7 & - &     191.7 & - & 5.6 & 34.0 & - &      6e-09 & - & 6e-09 \\ \hline 
% 336400 & 90.3 & - & 125.6 & - &      - & 474.9 & - &     238.3 & - & 7.2 & 33.2 & - &      6e-09 & - & 6e-09 \\ \hline 
% 372100 & 98.6 & - & 140.5 & - &      - & 570.1 & - &     286.2 & - & 9.7 & 29.6 & - &      8e-09 & - & 8e-09 \\ \hline 
409600 & 107.6 & - & 156.4 & - &      - & 686.3 & - &     343.9 & - & 11.8 & 29.2 & - &      6e-09 & - & 6e-09 \\ \hline 
% 448900 & 173.2 & - & 201.8 & - &      - & 526.3 & - &     146.2 & - & 4.6 & 31.7 & - &      5e-09 & - & 5e-09 \\ \hline 
% 490000 & 189.8 & - & 223.2 & - &      - & 590.4 & - &     171.6 & - & 5.4 & 31.9 & - &      5e-09 & - & 6e-09 \\ \hline 
% 532900 & 205.9 & - & 243.5 & - &      - & 656.4 & - &     199.6 & - & 6.2 & 32.0 & - &      7e-09 & - & 7e-09 \\ \hline 
% 577600 & 222.2 & - & 265.6 & - &      - & 732.7 & - &     232.1 & - & 7.3 & 31.8 & - &      3e-08 & - & 3e-08 \\ \hline 
% 624100 & 237.7 & - & 289.8 & - &      - & 814.5 & - &     266.0 & - & 8.9 & 30.0 & - &      7e-09 & - & 7e-09 \\ \hline 
672400 & 253.9 & - & 324.1 & - &      - & 905.1 & - &     307.6 & - & 10.4 & 29.5 & - &      6e-09 & - & 6e-09 \\ \hline 
% 722500 & 263.6 & - & 353.1 & - &      - & 986.0 & - &     351.4 & - & 11.6 & 30.3 & - &      1e-08 & - & 1e-08 \\ \hline 
% 774400 & 280.3 & - & 369.0 & - &      - & 1087.4 & - &     398.9 & - & 13.2 & 30.3 & - &      6e-09 & - & 6e-09 \\ \hline 
% 828100 & 296.8 & - & 379.8 & - &      - & 1212.1 & - &     454.8 & - & 15.3 & 29.8 & - &      1e-08 & - & 1e-08 \\ \hline 
883600 & 309.8 & - & 401.4 & - &      - & 1322.3 & - &     511.9 & - & 17.6 & 29.0 & - &      5e-09 & - & 5e-09 \\ \hline 
% 940900 & 317.0 & - & 448.7 & - &      - & 1461.6 & - &     571.6 & - & 20.8 & 27.5 & - &      8e-09 & - & 9e-09 \\ \hline 
1000000 & 333.3 & - & 477.5 & - &      - & 1614.3 & - &     649.3 & - & 22.6 & 28.8 & - &      9e-09 & - & 9e-09 \\ \hline
\end{tabular}
}
        \caption{Results obtained with experiment 4; CPU times and relative errors of the three solvers}
        \label{table:exp4}
     \end{table}

    \begin{figure}[!htbp]
  \begin{center}
    \begin{subfigure}[b]{0.4\textwidth}
      \centering
       \includegraphics[width=\linewidth]{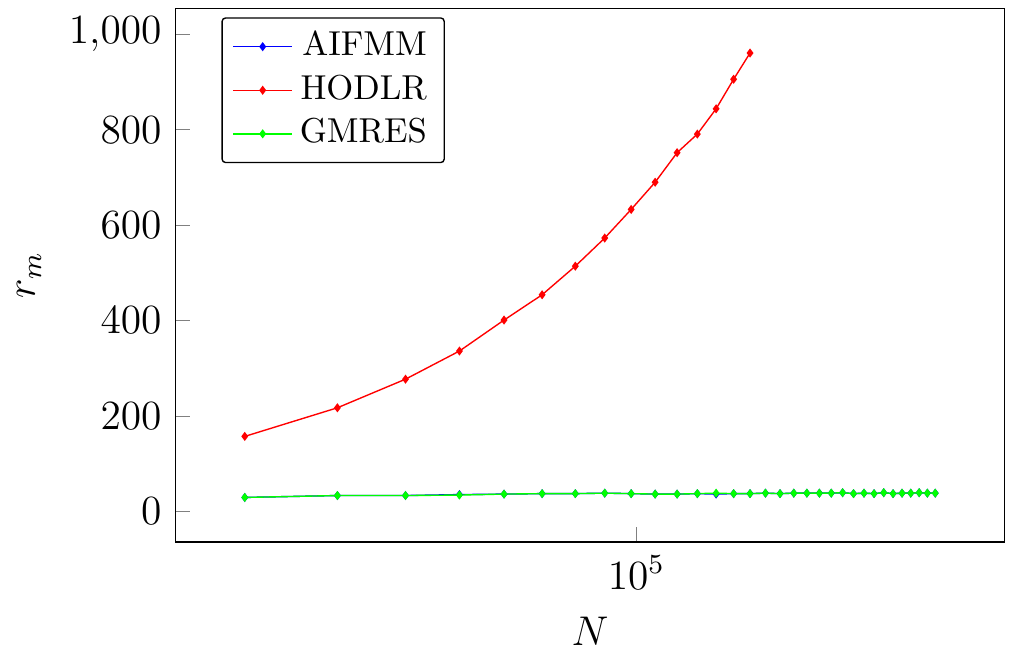}% \includestandalone[width=\linewidth]{latexFigures/exp3rankVsN}
      \end{subfigure}%
      \begin{subfigure}[b]{0.4\textwidth}
        \centering
        \includegraphics[width=\linewidth]{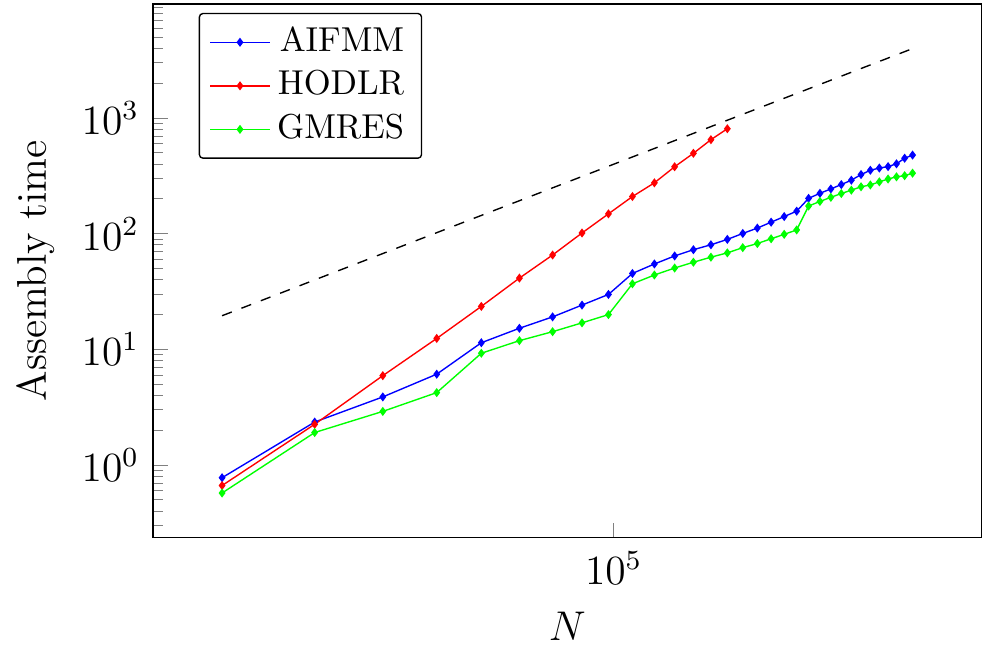}% \includestandalone[width=\linewidth]{latexFigures/exp3assTimeVsN}
        \end{subfigure}%\\[1ex]
        
      \begin{subfigure}[b]{0.4\textwidth}
        \centering
        \includegraphics[width=\linewidth]{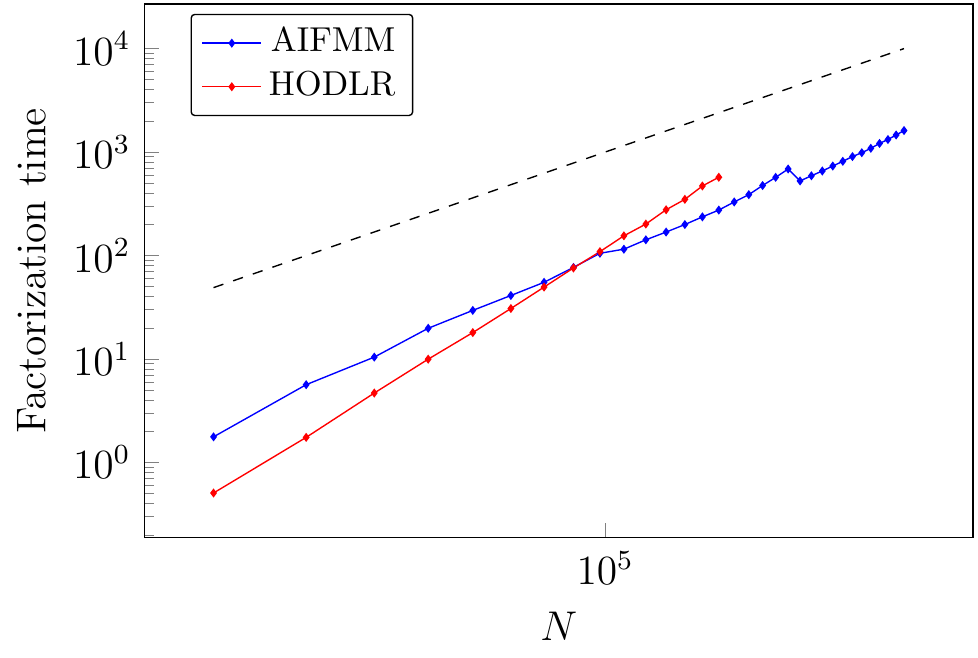}% \includestandalone[width=\linewidth]{latexFigures/exp3factTimeVsN}
        \end{subfigure}%
        \begin{subfigure}[b]{0.4\textwidth}
          \centering
          \includegraphics[width=\linewidth]{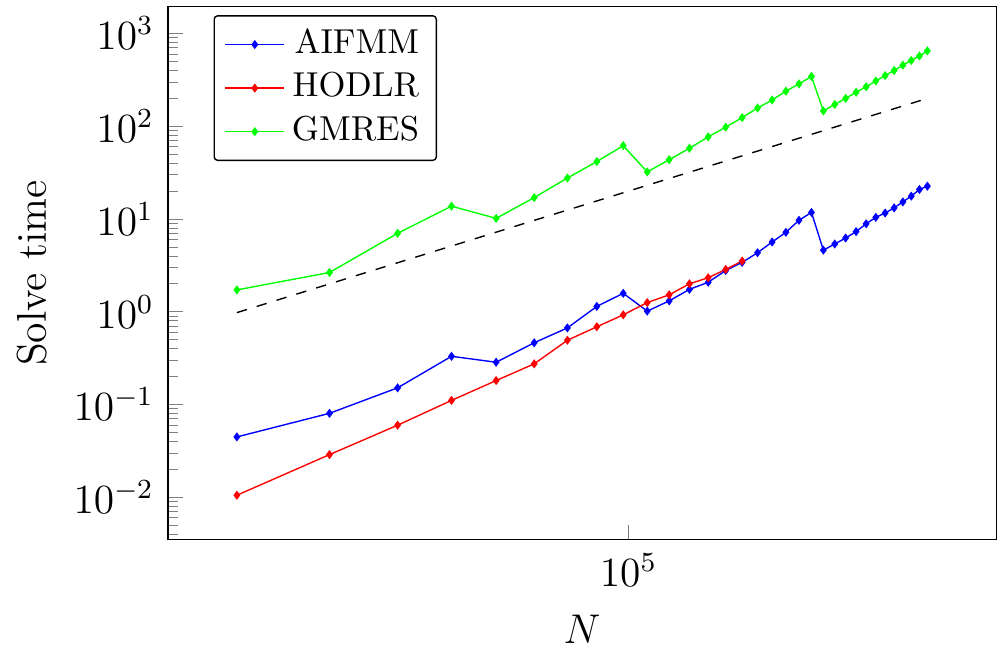}% \includestandalone[width=\linewidth]{latexFigures/exp3solveTimeVsN}
          \end{subfigure}%
      \caption{Results obtained with Experiment 4; Plots of $r_{m}$, assembly time, factorisation time, and solve time versus $N$ of AIFMM in comparison to those of HODLR and GMRES}
      \label{fig:exp4}
  \end{center}
\end{figure}

\subsection{Wave scattering in 2D}
% \subsection{Experiment 5: AIFMM as a direct solver for the Lippmann-Schwinger equation}
We now demonstrate AIFMM on the matrix arising in solving acoustic or electromagnetic scattering from penetrable media using the Lippmann-Schwinger equation. It arises in many applications such as medical imaging, sonar, radar, geophysics, remote sensing, etc.
% \newline
% \noindent
\subsubsection{Formulation of the Lippmann-Schwinger equation in 2D}
% The main focus of this article is constructing the AIFMM. So we do not seek to construct and discretize the Lippmann-Schwinger equation here. Instead, we direct the readers to articles~\cite{gujjulaDAFMM, ambikasaran2016fast} for a detailed understanding. Nevertheless, we brief it here.
% We do not seek to construct and discretize the Lippmann-Schwinger equation here which was already done in articles~\cite{gujjulaDAFMM, ambikasaran2016fast}. Instead, we brief it here.
We now brief the formulation of the Lippmann-Schwinger equation in 2D. For a detailed description of the formulation of the Lippmann-Schwinger equation, we refer the readers to~\cite{gujjulaDAFMM, ambikasaran2016fast}.

Let $q(x)$, having compact support in the domain $\Omega$, be the contrast function (or the susceptibility) of the penetrable medium. Let $u^{inc}(x)$ be the incident field and $u^{scat}(x)$ be the scattered field. Assume $\kappa$ to be the wavenumber of the incident field. The total field $u(x)$ satisfies the time-harmonic Helmholtz equation
\begin{equationwithLineno} \label{eq:1}
  \nabla^{2}u(x) +\kappa^{2}(1+q(x))u(x) = 0, \hspace{3mm} x\in \mathbb{R}^{2}.
\end{equationwithLineno}
Assuming the incident field, $u^{inc}(x)$, satisfies the homogeneous Helmholtz equation, the scattered field $u^{scat}(x)$ satisfies
\begin{equationwithLineno} \label{eq:2}
  \nabla^{2}u^{scat}(x) +\kappa^{2}(1+q(x))u^{scat}(x) = f(x)
  % , \hspace{3mm} x\in \Omega
\end{equationwithLineno}
where $f(x) = -\kappa^{2}q(x)u^{inc}(x)$.
Expressing $u^{scat}(x)$ as the volume potential in terms of an unknown density function $\psi(x)$ and the Green's function, as in Equation~\eqref{eq:4}
\begin{equationwithLineno} \label{eq:4}
 u^{scat}(x) = V[\psi](x) = \int_{\Omega}G_{\kappa}(x,y)\psi(y)dy,
\end{equationwithLineno}
results in the Lippmann-Schwinger equation
\begin{equationwithLineno} \label{eq:3}
  % \psi(x) + \kappa^{2}q(x)V[\psi](x) = -\kappa^{2}q(x)u^{inc}(x)
  \psi(x) + \kappa^{2}q(x)V[\psi](x) = f(x).
\end{equationwithLineno}
To discretize the Lippmann-Schwinger equation, a balanced quad-tree is constructed. 
The tensor product Chebyshev nodes of size $p^{2}$ of each leaf box are considered to be the grid points. 
A local polynomial approximation of $\psi(x)$ in each leaf box $B$ is built as following
\begin{equationwithLineno} \label{eq:7}
  \psi(x) \approx \psi^{B}(x) = \sum_{l=1}^{N_{p}} c_{l}^{B} b_{l}\left(\frac{\zeta_{1}-\alpha_{1}^{B}}{\beta^{B}}, \frac{\zeta_{2}-\alpha_{2}^{B}}{\beta^{B}}\right)\hspace{3mm}
  \forall \hspace{1mm} x=(\zeta_{1},\zeta_{2})\in B.
\end{equationwithLineno}
 where $N_{p}$ is chosen to be $p(p+1)/2$, $(\alpha_{1}^{B},\alpha_{2}^{B})$ are the coordinates of the center of box $B$, $\beta^{B}$ is the half side length of $B$, and $\{b_{l}\}_{l=1}^{N_{p}}$ are the polynomial basis functions.
 Let $\{x_{i}^B\}_{i=1}^{p^{2}}$ be the gridpoints of box $B$, at which the unknown $\psi(x)$ is evaluated.
 Vector $\vec{\psi}^{B}=[\psi(x_{1}^B), \psi(x_{2}^B), \ldots,\psi(x_{p^{2}}^B)]^{T}$ is expressed in terms of vector $\vec{c}^{B} = [{c}^{B}_{1}, {c}^{B}_{2}, \ldots,{c}^{B}_{N_{p}}]^{T}$ as
\begin{equationwithLineno} \label{eq:9}
  \vec{\psi}^{B} = Q\vec{c}^{B}
\end{equationwithLineno}
where $Q \in \mathbb{R}^{p^2 \times N_p}$ is the interpolation matrix, whose entries are given by
\begin{equationwithLineno} \label{eq:9b}
  Q_{il} = b_{l}\left(\frac{\zeta_{i,1}^B-\alpha_{1}^{B}}{\beta^{B}}, \frac{\zeta_{i,2}^B-\alpha_{2}^{B}}{\beta^{B}}\right),  \;\; x_{i}^B = (\zeta_{i,1}^B,\zeta_{i,2}^B).
\end{equationwithLineno}
By taking the pseudo-inverse of $Q$, we obtain $\vec{c}^{B}$ in terms of $\vec{\psi}^{B}$
\begin{equationwithLineno} \label{eq:10}
  \vec{c}^{B} = Q^{\dagger}\vec{\psi}^{B}.
\end{equationwithLineno}
Using Equations~\eqref{eq:10} and~\eqref{eq:7}, we have
\begin{equationwithLineno} \label{eq:11}
  \psi^{B}(x) = \sum_{l=1}^{N_{p}}  Q^{\dagger}(l,:)\vec{\psi}^{B}  b_{l}\left(\frac{\zeta_{1}-\alpha_{1}^{B}}{\beta^{B}}, \frac{\zeta_{2}-\alpha_{2}^{B}}{\beta^{B}}\right).
\end{equationwithLineno}
Using Equation~\eqref{eq:11} we build an approximate of $V[\psi](x)$ as 
\begin{equationwithLineno} \label{eq:11b}
  V[\psi](x) \approx \sum_{\mathcal{L}}\int_{B}G_{\kappa}(x,y)\sum_{l=1}^{N_{p}}  Q^{\dagger}(l,:)\vec{\psi}^{B}b_{l}\left(\frac{\zeta_{1}-\alpha_{1}^{B}}{\beta^{B}}, \frac{\zeta_{2}-\alpha_{2}^{B}}{\beta^{B}}\right) dy.
\end{equationwithLineno}
where $\mathcal{L}$ is the set of all leaf boxes.
Using Equations~\eqref{eq:11b} and~\eqref{eq:3}, the discretized version of the Lippmann-Schwinger equation is obtained,
\begin{equationwithLineno} \label{eq:11c}
  \psi(x) +\kappa^{2}q(x) \sum_{\mathcal{L}}\int_{B}G_{\kappa}(x,y)\sum_{l=1}^{N_{p}}  Q^{\dagger}(l,:)\vec{\psi}^{B}b_{l}\left(\frac{\zeta_{1}-\alpha_{1}^{B}}{\beta^{B}}, \frac{\zeta_{2}-\alpha_{2}^{B}}{\beta^{B}}\right) dy\approx f(x).
\end{equationwithLineno}
 By enforcing Equation~\eqref{eq:3} at 
% $\mathcal{X}=\{x_{i}\}_{i=1}^{N}$, the union of 
the grid points of all the leaf nodes of the tree, and using Equation~\eqref{eq:11b}, we obtain the linear system
\begin{equationwithLineno} \label{eq:5}
  A\vec{\psi} = \vec{f}
\end{equationwithLineno}
where $\vec{\psi}$ is a vector that contains function values of $\psi(x)$ evaluated at the grid points of leaf boxes of the quad-tree. 
The $(i,j)^{th}$ entry of $A$, that represents the contribution of the $j^{th}$ grid point at the $i^{th}$ grid point is given by
\begin{equationwithLineno} \label{eq:matA}
  A_{ij} = \delta_{ij} + \kappa^{2}q(x_{i})\int_{\Omega \in B}G_{\kappa}(x_i,y) \sum_{l=1}^{N_{p}}  Q^{\dagger}_{l,j'} b_{l}\left(\frac{y_{1}-\alpha_{1}^{B}}{\beta^{B}}, \frac{y_{2}-\alpha_{2}^{B}}{\beta^{B}}\right)  dy_{1} dy_{2}
\end{equationwithLineno} where $j'=1+(j-1) \bmod p^{2}$ and $B$ is a leaf box that contains the support of the grid point $x_{j}$.
% , and $\{\zeta_{i}\}_{i=1}^{p^2}$ are the grid points of the box $B$. 
The entries $f_{j}$ of the rhs vector $\vec{f}$ are given by $f_{j}=f(x_{j})$.

We solve for $\vec{\psi}$ and then use it to find $u^{scat}(x)$. 
$u^{scat}(x)$ is obtained by discretizing Equation~\eqref{eq:4} (in the same way that the Lippmann-Schwniger equation is discretized) and performing fast directional summation using the Directional Algebraic Fast Multipole Method (DAFMM)~\cite{gujjulaDAFMM}. 

% For a detailed description of the entries of the matrix $A$, we refer the readers to~\cite{gujjulaDAFMM, ambikasaran2016fast}.
We find the error in the solution, using function $E(x)$, defined as the residual of Equation~\eqref{eq:11c} normalized with $\kappa^{2}$.
\begin{equationwithLineno}\label{eq:LSerror}
  % E(x)&=   \frac{1}{\kappa^{2}}\left|\psi(x) + \kappa^{2}q(x)V[\psi](x)- f(x)\right|, \hspace{2mm} x\in\Omega \\
  E(x)=\left|\frac{\psi(x)}{\kappa^{2}}+q(x)
  \sum_{\mathcal{L}}\int_{B}G_{\kappa}(x,y)\sum_{l=1}^{N_{p}}  Q^{\dagger}(l,:)\vec{\psi}^{B}b_{l}\left(\frac{y_{1}-\alpha_{1}^{B}}{\beta^{B}}, \frac{y_{2}-\alpha_{2}^{B}}{\beta^{B}}\right) dy - \frac{f(x)}{\kappa^{2}}\right|, \hspace{2mm} x\in\Omega.
\end{equationwithLineno}
% It is to be noted that $E(x)$ captures the error due to 
We define vector $\vec{E}$, where the entries take the function values of $E(x)$ at the grid points of all the leaf nodes.
% , i.e., $E_{i}= E(x_{i})$.
We report $\|\vec{E}\|_{2}$ as well as illustrate $E(x)$ pictorially.
% $\forall x\in\{x_{i}\}_{i=1}^{N}$, . 
We further define a few notations to represent this error for various solvers in Table~\ref{table:LSerr}.

   \begin{table}[!htbp]
\centering
  \begin{tabular}{|l|l|}
    \hline
    $E_{G}^{*}$ & Error $\|\vec{E}\|_{2}$ of GMRES\\ \hline
    % $E_{H}^{*}$ & Error $\|\vec{E}\|_{2}$ of HODLR\\ \hline
    % $E_{A}^{*}$ & Error $\|\vec{E}\|_{2}$ of AIFMM\\ \hline
    $E_{pH}^{*}$ & Error $\|\vec{E}\|_{2}$ of GMRES with HODLR as preconditioner\\ \hline
    $E_{BD}^{*}$ & Error $\|\vec{E}\|_{2}$ of GMRES with block-diagonal preconditioner\\ \hline
    $E_{pA}^{*}$ & Error $\|\vec{E}\|_{2}$ of GMRES with AIFMM as preconditioner\\ \hline
  \end{tabular}
  \caption{Errors of various solvers for the Lippmann-Schwinger equation}
    \label{table:LSerr}
 \end{table}

\subsubsection{Experiment 5: AIFMM as a preconditioner in an iterative solver for Lippmann-Schwinger equation at high frequency}
% To use AIFMM as a direct solver at high frequencies, one needs to use the directional admissibility condition~\cite{gujjulaDAFMM,engquist2009fast,engquist2010fast}, for better scaling.
% But in this experiment to build AIFMM as a preconditioner, we proceed with the admissibility condition stated in Equation~\eqref{eq:adm_condn}.
In this experiment, we demonstrate AIFMM as a preconditioner in solving the Lippmann-Schwinger equation using GMRES.
 We consider Gaussian contrast defined as
\begin{equationwithLineno}\label{eq:gaussianContrast}
  q(x) = 1.5\exp(-160\left(x_1^2+x_2^2\right)).
\end{equationwithLineno}
$\kappa$ is set to $300$. The depth of the uniform quad tree is set to 6.
The leaf size $p^{2}$ is varied to get different system sizes as shown in Table~\ref{table:AIFMMTreeSetting}. The same tree that is used for discretization of the Lippmann-Schwinger equation is used for building AIFMM, HODLR, and DAFMM routines. $\epsilon_{G}$ and $\epsilon_{GMRES}$ are set to $10^{-10}$ and $10^{-8}$ respectively.

 \begin{table}[!htbp]
\centering
  \begin{tabular}{|c|c|c|c|}
%   \begin{tabularx}{\textwidth}{|l|l|l|l|l|l|l|l|l|l|l|l|X|}
    \hline
    $p^{2}$  & 36 & 64 & 100 \\ \hline 
    $N$ & 147456 & 262144 & 409600 \\ \hline
  \end{tabular}
  \caption{Different leaf sizes and the corresponding problem sizes}
    \label{table:AIFMMTreeSetting}
 \end{table}

% The system size of the discretized Lippmann-Scwinger is $N=36864$. Each iteration of GMRES involves the computation of matrix-vector products, which is done using DAFMM. 
% $\epsilon_{grid}=10^{-5}$. 
% $\epsilon=10^{-4}$. 
% $\epsilon_{NCA}$ of the DAFMM routine is set to $10^{-8}$. $\epsilon_{GMRES}$ is set to $10^{-12}$. We construct different preconditioners by considering different $\epsilon_{NCA}$'s for the AIFMM. 
% The compression tolerance for the DAFMM routine is set to $10^{-10}$. 
% $\epsilon_{G}$ is set to $10^{-10}$.
% $\epsilon_{GMRES}$ is set to $10^{-8}$. 

We compared AIFMM as a preconditioner to a block-diagonal preconditioner and HODLR preconditioner. 
The block diagonal preconditioner is constructed by choosing the block size to be equal to the leaf size.
In Table~\ref{table:bdprecond}, we illustrate the CPU times and the errors of the four iterative solvers: GMRES solver with no preconditioner, GMRES with block-diagonal preconditioner, GMRES with HODLR as a preconditioner, and GMRES with AIFMM as a preconditioner. 
% For the problem considered, it can be noticed that HODLR as a preconditioner performs better than AIFMM as a preconditioner. And AIFMM as a preconditioner performs better than the block diagonal preconditioner. 
% In Table~\ref{table:bdprecond}, the speed-up obtained with AIFMM as preconditioner over the block-diagonal preconditioner is also presented. 
% In Figure~\ref{fig:precondAIFMM300}, we plot the real part of the scattered field, log plot of the error function, relative residual versus iteration count for various values of $\epsilon_{A}$, and time taken by the different iterative solvers.
In Figures~\ref{fig:AIFMMField} and~\ref{fig:AIFMMError}, for $N=147456$, the real part of the scattered field, and the log plot of the error function obtained using GMRES with AIFMM as preconditioner with $\epsilon_{A}$ set to $10^{-5}$ are plotted. In Figures~\ref{fig:AIFMMIter} and~\ref{fig:AIFMMTime}, relative residual versus iteration count for different values of $\epsilon_{A}$ and time taken by the different iterative solvers are plotted.

\begin{figure}[!htbp]
      \begin{center}
          \begin{subfigure}[t]{0.5\textwidth}
            % \centering
              % \includegraphics[width=\linewidth]{latexFiguresPDF/field300Precond.jpg}
              \includegraphics[width=\linewidth]{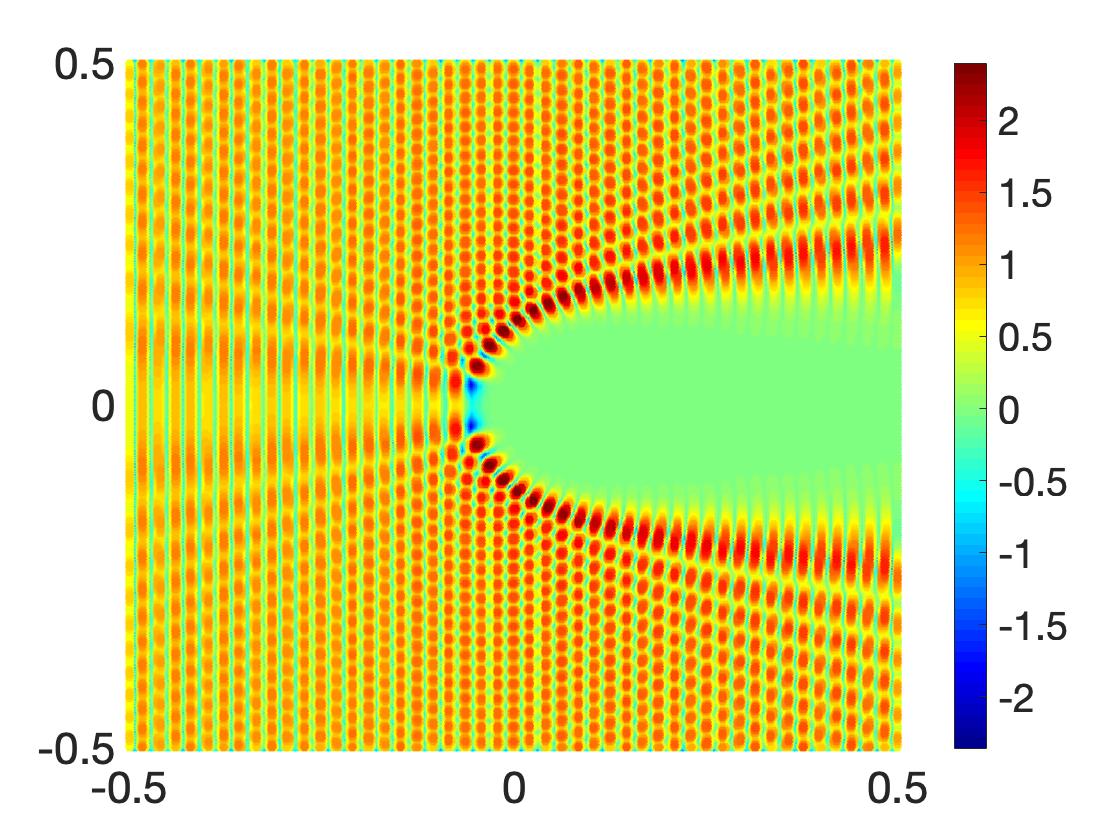}
              \caption{}
            \label{fig:AIFMMField}
          \end{subfigure}%
          \begin{subfigure}[t]{0.5\textwidth}
            % \centering
            % \includegraphics[width=\linewidth]{latexFiguresPDF/error300Precond.jpg}
            \includegraphics[width=\linewidth]{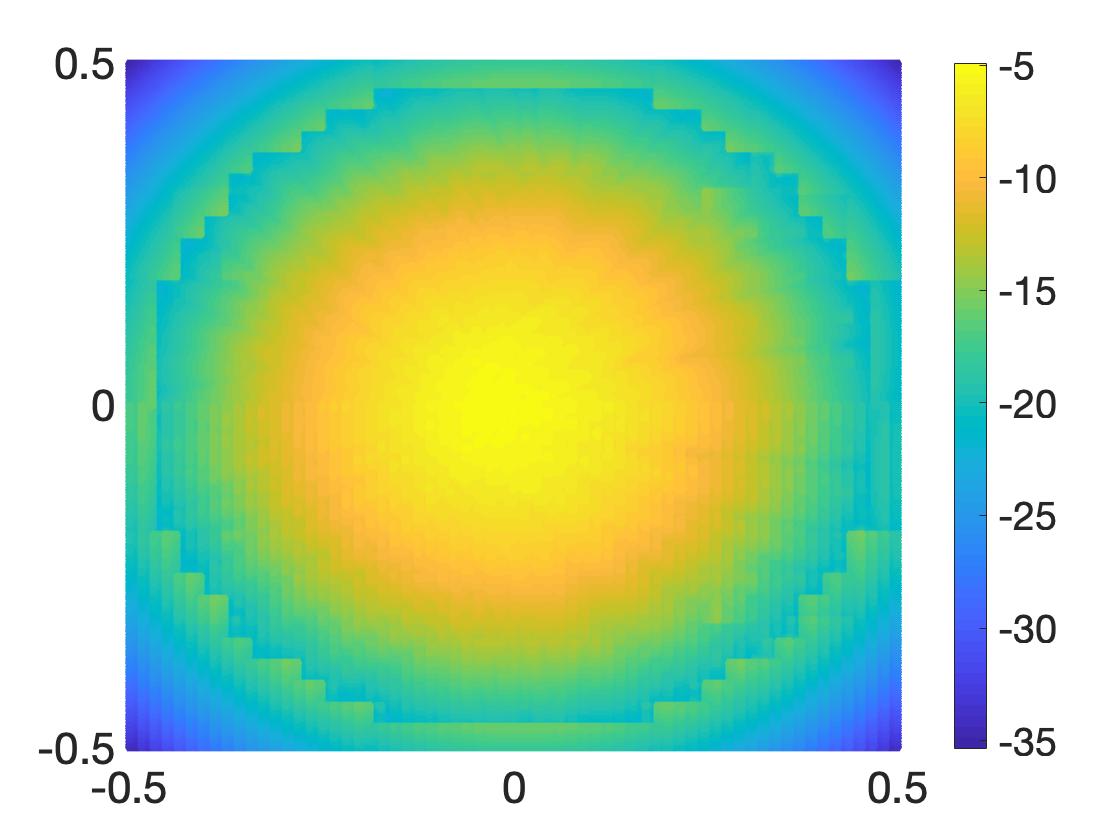}
            \caption{}
            \label{fig:AIFMMError}
          \end{subfigure}
      \hfill
          \begin{subfigure}[t]{0.5\textwidth}
            % \centering
            % \includegraphics[width=\linewidth]{latexFiguresPDF/iteration300Precond.jpg}
            % \includegraphics[width=\linewidth]{latexFiguresPDF/convergence_error.jpg}
            \includegraphics[width=\linewidth]{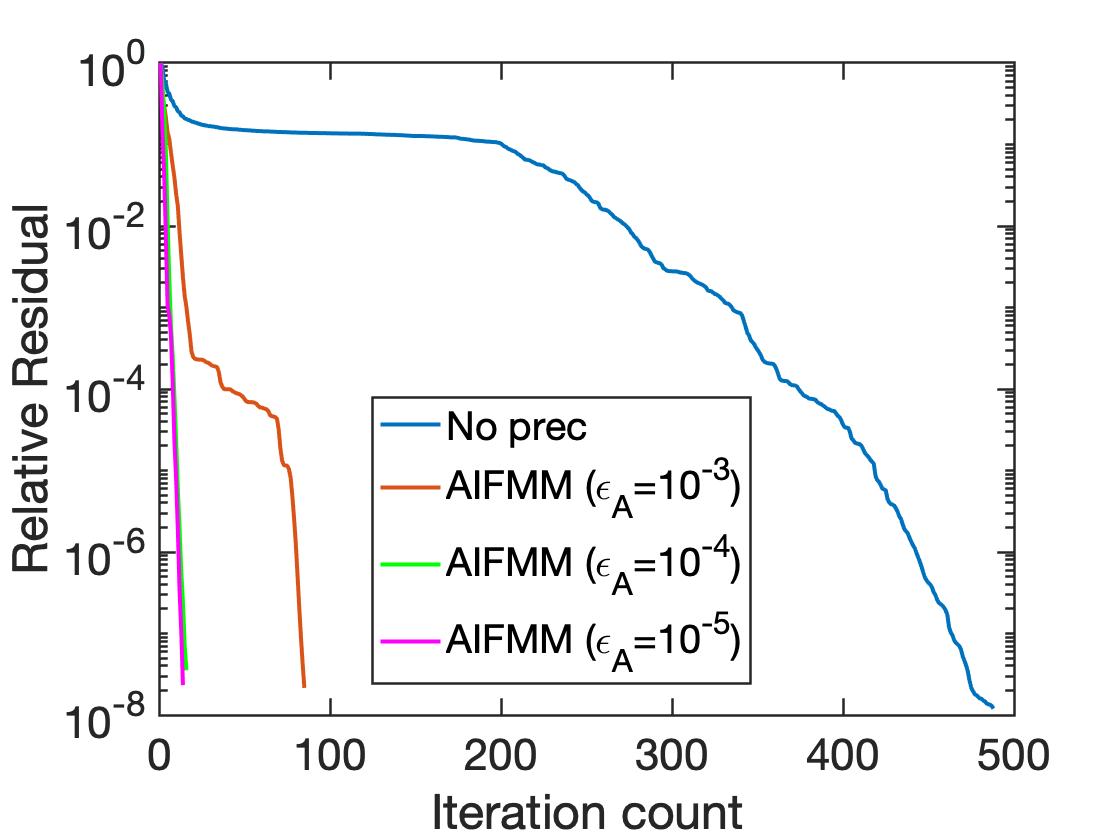}
            \caption{}
              \label{fig:AIFMMIter}
          \end{subfigure}%
          \begin{subfigure}[t]{0.5\textwidth}
            % \centering
            % \includegraphics[width=\linewidth]{latexFiguresPDF/time300Precond.jpg}
            % \includegraphics[width=\linewidth]{latexFiguresPDF/convergence_time.jpg}
            \includegraphics[width=\linewidth]{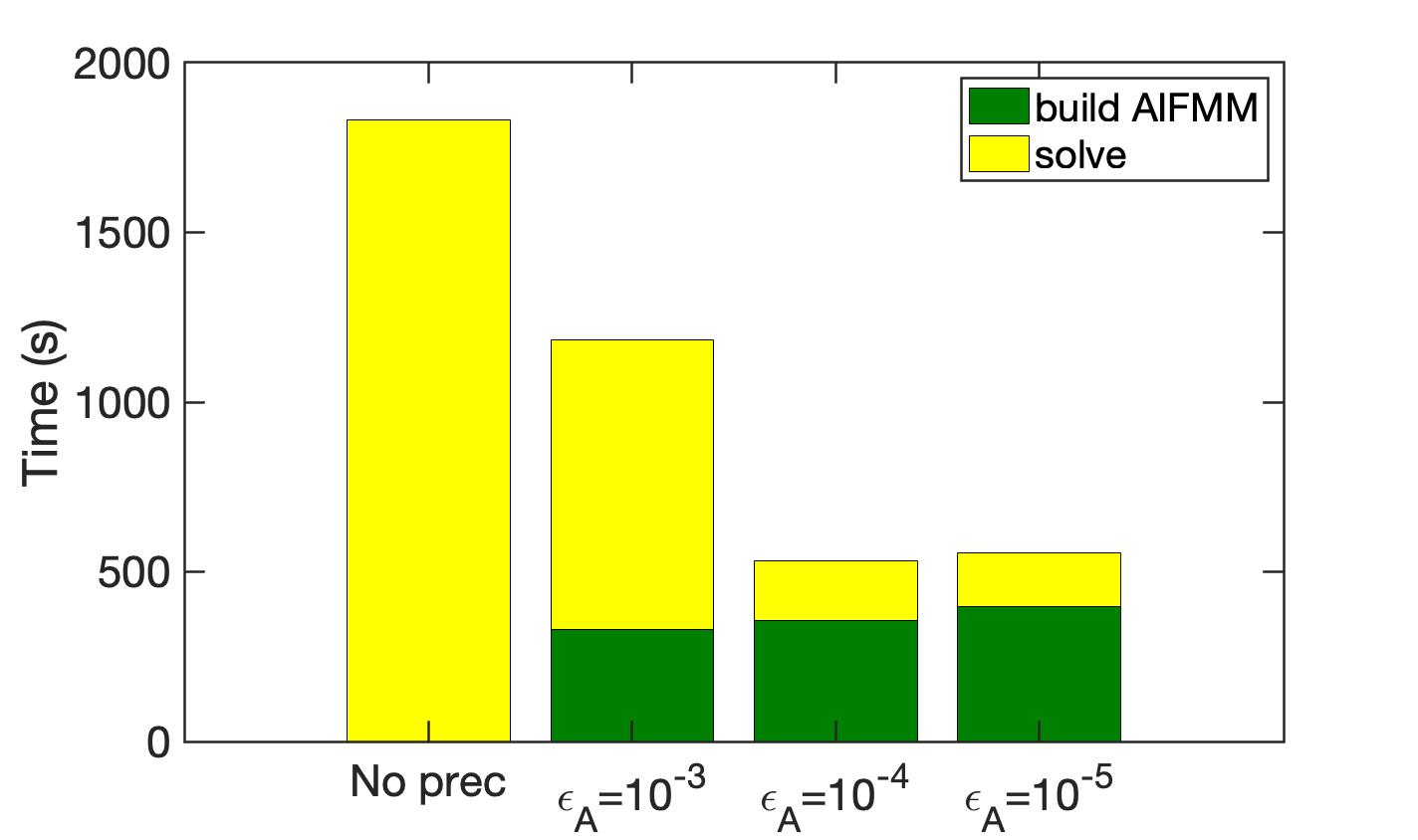}
            \caption{}
            \label{fig:AIFMMTime}
          \end{subfigure}
        \end{center}
        \caption{Results obtained with Experiment 5 with $N=147456$;
        % for Gaussian Contrast using GMRES with AIFMM as preconditioner and matrix-vector products accelerated by DAFMM, where $\kappa=300$; $N=36864$; $\epsilon=10^{-4}$; $\epsilon_{GMRES}=10^{-12}$; $\epsilon_{NCA}=10^{-8}$; $\epsilon_{grid}=10^{-5}$
        a) real part of the scattered field b) log plot of the error function $E(x)$ c) relative residual versus iteration count for various values of $\epsilon_{A}$ d) Time taken by the iterative solver, where the green bar indicates the time taken to assemble and factorize the AIFMM preconditioner and the yellow bar indicates the taken by the GMRES solver that includes the time to apply the preconditioner}
        \label{fig:precondAIFMM300}
        \end{figure}

\begin{table}[!htbp]
\centering
% \resizebox{\columnwidth}{!}{
\setlength\extrarowheight{2.5pt}
\begin{tabular}{|c|c|c|c|c|}
\hline
% Preconditioner
& $N$ & 147456 & 262144 & 409600   \\ \hline
 & $T_{Ga}$ & 856.04 & 699.70 & 1222.23 \\ \hline
 % \cline{2-5}
\multirow{ 3}{*}{GMRES with} 
 & $T_{Gs}$ & 1822.34 & 3862.73 & 10353.70 \\
& $I_{G}$ & 488 & 365 & 366  \\
no preconditioner & $E_{G}^{*}$ & $1.8\times10^{-4}$ & - &  -\\ \hline
\multirow{ 3}{*}{Block Diagonal} 
& $T_{BD}$ & 1170 & 3347.6 & 8306.9\\
& $I_{BD}$ & 294 & 295 & 293\\
preconditioner & $E_{BD}^{*}$ & $1.8\times10^{-4}$ & -   &  -\\ \hline
\multirow{ 5}{*}{HODLR $(\epsilon_{H}=10^{-5})$} 
& $T_{Ha}$ & 133.85 & 242.76 & 442.34\\
GMRES with & $T_{Hf}$ & 33.78 & 65.24 & 116.52\\
& $T_{pHs}$ & 62.75 & 192.55 & 457.66\\
preconditioner & $I_{pH}$ & 16 & 17 & 15\\
& $E_{pH}^{*}$ & $1.8\times10^{-4}$ & -  & - \\ \hline
\multirow{ 7}{*}{AIFMM $(\epsilon_{A}=10^{-5})$} 
& $T_{Aa}$ & 43.10 & 95.50 & 167.49 \\
& $T_{Af}$ & 360.69 & 1711.02 & 5843.42 \\
GMRES with & $T_{pAs}$ & 151.77 & 535.00 & 1644.67 \\
& $I_{pA}$ & 14 & 12 & 12  \\
preconditioner & $E_{pA}^{*}$ & $1.8\times10^{-4}$ & -   &  -  \\
%  & $\frac{T_{G}}{T_{pA}}$ & 3.28 & 1.65  & 1.35 \\
% & $\frac{T_{BD}}{T_{pA}}$ & 2.11 & 1.42  & 1.08\\ \hline
 & $\frac{T_{Gs}}{T_{Aa}+T_{Af}+T_{pAs}}$ & 3.28 & 1.65  & 1.35 \\
& $\frac{T_{BD}}{T_{Aa}+T_{Af}+T_{pAs}}$ & 2.11 & 1.43  & 1.08\\ \hline
% & $\frac{I_{BD}}{I_{A}}$ & 15.84 & 25.25  & 23.08\\ \hline
% \multirow{ 3}{*}{AIFMM $\epsilon_{NCA}=10^{-6}$} & $T_{A}$ & 935.132 & 2619.05 \\
% & $I_{A}$ & 19 & 17 \\
% & $\frac{T_{BD}}{T_{A}}$ & 1.49 & 1.30 \\
% & $\frac{I_{BD}}{I_{A}}$ & 15.74 & 17.59 \\ \hline
% \multirow{ 3}{*}{AIFMM $\epsilon_{NCA}=10^{7}$} & $T_{A}$ & 1066.38 & 2592.79 \\
% & $I_{A}$ & 27 & 25 \\
% & $\frac{T_{BD}}{T_{A}}$ & 1.31 & 1.32 \\
% & $\frac{I_{BD}}{I_{A}}$ & 10.68 & 19.93 \\ \hline
\end{tabular}
% }
\caption{Results obtained with Experiment 5; CPU times, relative error, and number of iterations it takes using GMRES with no preconditioner and GMRES with AIFMM, HODLR, and block diagonal preconditioners}
\label{table:bdprecond}
\end{table}

\subsection{Inferences}
 The following inferences are to be noticed from Figures~\ref{exp1VsN} to~\ref{fig:exp4} and Tables~\ref{table:exp2} to~\ref{table:exp4}.
 \begin{enumerate}
    \item 
    The maximum rank of HODLR is proportional to $N$, whereas that of AIFMM does not scale with $N$.
    \item 
    Assembly time, solve time, and factorization time scale linearly with $N$ for AIFMM and GMRES, whereas those of HODLR do not scale linearly.
    \item AIFMM is faster than HODLR for the examples considered. The speedup can be observed from Tables~\ref{table:exp2} to~\ref{table:exp4}.
    % , one can notice the speedup in Table~\ref{table:exp2}. 
    \item The assembly time of GMRES and AIFMM are nearly equal and the solve time of GMRES is higher than that of AIFMM. When the total CPU time is considered, $T_{Ga}+T_{Gs}$ for GMRES and $T_{Aa}+T_{Af}+T_{As}$ for AIFMM, GMRES is faster than AIFMM for the examples considered. But when one is interested in multiple right-hand sides, it is advantageous to use AIFMM over GMRES, as the solve time of AIFMM is lower than that of GMRES.
 \end{enumerate}

% From Figure~\ref{fig:exp5} and Table~\ref{table:exp5} too, the same inferences as above can be inferred, except that HODLR is not accurate.
% it can be noticed that AIFMM scales linearly and AIFMM performs better than GMRES when one were to solve for multiple right-hand sides. Further, HODLR for Experiment 5 is not accurate.
From Table~\ref{table:bdprecond}, it can be noticed that AIFMM performs well as a preconditioner for the high frequency scattering problem and is better than the block diagonal preconditioner, but not as good as the HODLR preconditioner.
% It can also be noticed that HODLR as a preconditioner performs better than AIFMM as a preconditioner.

In summary, for the problems considered we observed that
\begin{itemize}
    \item The time complexity of AIFMM scales linearly with $N$.
    \item In problems involving the low frequency Helmholtz function and non-oscillatory Green's functions, AIFMM performs better than HODLR as a direct solver. And AIFMM performs better than GMRES when one considers multiple right hand sides.
    \item In the high frequency scattering problem, HODLR as a preconditioner performs better than AIFMM as a preconditioner. And AIFMM as a preconditioner performs better than the block diagonal preconditioner.
\end{itemize}

\section{Conclusions}
A completely algebraic, linear complexity, direct solver for FMM matrices is presented. The various FMM operators are obtained using NNCA, that algebraically obtains nested bases. The advantages of an algebraic technique are (i) the ranks obtained are lower because the method is domain and problem specific; (ii) it can be used in black box fashion, independent of the application. The key ideas of the AIFMM are i) to construct an extended sparse system; (ii) and then perform elimination and substitution, wherein in the elimination phase, the fill-ins corresponding to well-separated hypercubes are compressed and redirected using RRQR. 
Various numerical experiments were presented to demonstrate the scaling and accuracy of AIFMM as a direct solver. It is shown that AIFMM is faster than HODLR, a direct solver. Further, when multiple right hand sides are to be solved for, then AIFMM is better than GMRES. It is also shown that for the high frequency scattering problem, it can be used as a preconditioner and it performs better than the block-diagonal preconditioner. 
% \textcolor{blue}{
% We are also exploring on developing a fast direct solver for a new class of hierarchical matrix HODLR2D~\cite{kandappan2022hodlr2d}.
% Since the solver is completely algebraic, the method described here can be extended to new classes of Hierarchical matrices, such as HODLR2D~\cite{kandappan2022hodlr2d}, etc.}

\subsection*{Acknowledgments}
    The authors acknowledge HPCE, IIT Madras for providing access to the AQUA cluster.
    Vaishnavi Gujjula acknowledges the support of Women Leading IITM (India) 2022 in Mathematics (SB22230053MAIITM008880). Sivaram Ambikasaran acknowledges the support of Young Scientist Research Award from Board of Research in Nuclear Sciences, Department of Atomic Energy, India (No.34/20/03/2017-BRNS/34278) and MATRICS grant from Science and Engineering Research Board, India (Sanction number: MTR/2019/001241).

% %% If you have bibdatabase file and want bibtex to generate the
% %% bibitems, please use
% %%
 \bibliographystyle{elsarticle-num} 
 \bibliography{references}

% %% else use the following coding to input the bibitems directly in the
% %% TeX file.

% % \begin{thebibliography}{00}

% % %% \bibitem{label}
% % %% Text of bibliographic item

% % \bibitem{}

% \end{thebibliography}
\end{document}